\documentclass[11pt]{amsart}
\headheight=8pt     \topmargin=0pt \textheight=624pt \textwidth=432pt \oddsidemargin=18pt
\evensidemargin=18pt

\usepackage{amssymb}

\makeatletter
\newcommand{\sumprime}{\if@display\sideset{}{'}\sum%
            \else\sum'\fi}
\makeatother

\begin{document}

\numberwithin{equation}{section}

\newtheorem{theorem}{Theorem}[section]
\newtheorem{proposition}[theorem]{Proposition}
\newtheorem{conjecture}[theorem]{Conjecture}
\def\theconjecture{\unskip}
\newtheorem{corollary}[theorem]{Corollary}
\newtheorem{lemma}[theorem]{Lemma}
\newtheorem{observation}[theorem]{Observation}
\newtheorem{definition}{Definition}
\numberwithin{definition}{section} 
\newtheorem{remark}{Remark}
\def\theremark{\unskip}
\newtheorem{question}{Question}
\def\thequestion{\unskip}
\newtheorem{example}{Example}
\def\theexample{\unskip}
\newtheorem{problem}{Problem}

\def\vvv{\ensuremath{\mid\!\mid\!\mid}}
\def\intprod{\mathbin{\lr54}}
\def\reals{{\mathbb R}}
\def\integers{{\mathbb Z}}
\def\N{{\mathbb N}}
\def\complex{{\mathbb C}\/}
\def\dist{\operatorname{dist}\,}
\def\spec{\operatorname{spec}\,}
\def\interior{\operatorname{int}\,}
\def\trace{\operatorname{tr}\,}
\def\cl{\operatorname{cl}\,}
\def\essspec{\operatorname{esspec}\,}
\def\range{\operatorname{\mathcal R}\,}
\def\kernel{\operatorname{\mathcal N}\,}
\def\dom{\operatorname{Dom}\,}
\def\linearspan{\operatorname{span}\,}
\def\lip{\operatorname{Lip}\,}
\def\sgn{\operatorname{sgn}\,}
\def\Z{ {\mathbb Z} }
\def\e{\varepsilon}
\def\p{\partial}
\def\rp{{ ^{-1} }}
\def\Re{\operatorname{Re\,} }
\def\Im{\operatorname{Im\,} }
\def\dbarb{\bar\partial_b}
\def\eps{\varepsilon}
\def\O{\Omega}
\def\Lip{\operatorname{Lip\,}}

\def\Hs{{\mathcal H}}
\def\E{{\mathcal E}}
\def\scriptu{{\mathcal U}}
\def\scriptr{{\mathcal R}}
\def\scripta{{\mathcal A}}
\def\scriptc{{\mathcal C}}
\def\scriptd{{\mathcal D}}
\def\scripti{{\mathcal I}}
\def\scriptk{{\mathcal K}}
\def\scripth{{\mathcal H}}
\def\scriptm{{\mathcal M}}
\def\scriptn{{\mathcal N}}
\def\scripte{{\mathcal E}}
\def\scriptt{{\mathcal T}}
\def\scriptr{{\mathcal R}}
\def\scripts{{\mathcal S}}
\def\scriptb{{\mathcal B}}
\def\scriptf{{\mathcal F}}
\def\scriptg{{\mathcal G}}
\def\scriptl{{\mathcal L}}
\def\scripto{{\mathfrak o}}
\def\scriptv{{\mathcal V}}
\def\frakg{{\mathfrak g}}
\def\frakG{{\mathfrak G}}

\def\ov{\overline}

\thanks{Research supported by the Key Program of NSFC No. 11031008.}

\address{Department of Mathematics, Tongji University, Shanghai, 200092, China}
\email{boychen@tongji.edu.cn}
\address{Department of Mathematics, Tongji University, Shanghai, 200092, China}
\email{1113xuwang@tongji.edu.cn}

\title{Holomorphic maps with large images}
 \author{Bo-Yong Chen and Xu Wang}
\date{}
\maketitle

\medskip

\centerline{\it Dedicated to the memory of Shoshichi Kobayashi}

\medskip

\begin{abstract}  We show that each pseudoconvex domain $\Omega\subset {\mathbb C}^n$ admits a holomorphic map $F$ to ${\mathbb C}^m$ with $|F|\le C_1 e^{C_2 \hat{\delta}^{-6}}$, where $\hat{\delta}$ is the minimum of the boundary distance and $(1+|z|^2)^{-1/2}$, such that every boundary point is a Casorati-Weierstrass point of $F$. Based on this fact, we introduce a new anti-hyperbolic concept --- universal dominability. We also show that for each $\alpha>6$ and each pseudoconvex domain $\Omega\subset {\mathbb C}^n$, there is a holomorphic function $f$ on ${\Omega}$ with $|f|\le C_\alpha e^{C_\alpha' \hat{\delta}^{-\alpha}}$, such that every boundary point is a Picard point of $F$. Applications to the construction of holomorphic maps of a given domain onto some ${\mathbb C}^m$ are given.
\bigskip

\noindent{{\sc Mathematics Subject Classification} (2010):  32H02, 32H35, 32T05.}

\smallskip

\noindent{{\sc Keywords}: Holomorphic map, Casorati-Weierstrass point, Picard point,
universally dominated space.}
\end{abstract}

\section{Introduction}

The isolated singularities of holomorphic functions in one complex variable were completely clarified through the classical theorems of Casorati-Weierstrass and Picard, which also mark the beginning of two great theories: the theory of cluster sets and the value distribution theory of Nevanlinna. The case of non-isolated singularities is much more complicated. Weierstrass is also the first person who considered the problem of finding a holomorphic function $f$ on a given domain $\Omega\subset {\mathbb C}$ such that each point of $\partial \Omega$ is a singularity of $f$. For domains $\Omega\subset {\mathbb C}^n$, such a property is equivalent to say that $\Omega$ is pseudoconvex, in view of the solution of Levi's problem by Oka, Bremermann and Norguet. Jarnicki and Pflug (cf. \cite{JarnickiPflugExtension}, p. 181) showed furthermore that there exists a holomorphic function $f$ on $\Omega$ such that $\hat{\delta}^k f$ is bounded on $\Omega$ for any $k>6n$, and each point of $\partial\Omega$ is a singularity of $f$. Here $\delta(z):={\rm dist\,}(z,\Omega^c)$ and $\hat{\delta}(z)=\min\{\delta(z),(1+|z|^2)^{-1/2}\}$.

In view of the theorem of Casorati-Weierstrass, we introduce the following

\begin{definition}[compare \cite{CollingwoodLohwater}] Consider a domain $\Omega\subset {\mathbb C}^n$ and a holomorphic map $F:\Omega\rightarrow {\mathbb C}^m$. Let\/ ${\mathbb C}_\infty$ be the Riemann sphere.
A point $p\in \partial \Omega\cup \{\infty\}$ is said to be a Casorati-Weierstrass point of $F$ if the cluster set $C_\Omega(F,p)$ of $F$ at $p$ coincides with the Osgood space ${\mathbb C}_\infty^m:=({\mathbb C}_\infty)^m$, where
      \begin{equation*}
        C_\Omega(F,p)=\bigcap_{r>0} \overline{F(\Omega\cap B(p,r))},
      \end{equation*}
      $B(p,r)$ being the ball with center $p$ and radius $r$ $($we set $B(\infty,r)=\{z\in {\mathbb C}^n:|z|>r\})\,$.
  \end{definition}

Let $\mathcal{O}(\Omega,{\mathbb C}^m)$ be the space of all holomorphic maps $F:\Omega\rightarrow {\mathbb C}^m$ with the topology of uniform convergence on compact subsets. For any $\alpha>0$, we define\/ ${\rm CW}_\alpha(\Omega,{\mathbb C}^m)$ to be the set of all maps $F\in \mathcal{O}(\Omega,{\mathbb C}^m)$ whose sets of Casorati-Weierstrass points coincide with $\partial \Omega\cup \{\infty\}$ ($\partial \Omega$ if $\Omega$ is bounded), and there are constants $C_1,C_2>0$ such that
$$
 |F(z)|\le C_1 e^{C_2 \hat{\delta}(z)^{-\alpha}}, \ \ \ z\in \Omega.
$$

\begin{theorem} If\/ $\Omega\subset {\mathbb C}^n$ is a pseudoconvex domain, then\/ ${\rm{CW}}_6(\Omega,{\mathbb C}^m)$ is nonempty. Furthermore, if\/ $\Omega$ is bounded, then\/ ${\rm{CW}}_2(\Omega,{\mathbb C}^m)$ is nonempty.
\end{theorem}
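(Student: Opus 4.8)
The plan is to construct $F=(f_1,\dots,f_m)$ by weighted $L^2$ interpolation against the natural plurisubharmonic weight attached to $\hat\delta$. I first record the key elementary fact: since $\Omega$ is pseudoconvex, $-\log\delta$ is plurisubharmonic, and $t\mapsto e^{\alpha t}$ is convex increasing, so $\delta^{-\alpha}$ is plurisubharmonic for every $\alpha>0$; moreover $1+|z|^2$ is strictly plurisubharmonic and $t\mapsto t^{\alpha/2}$ is convex increasing for $\alpha\ge 2$, so
\[
\hat\delta^{-\alpha}=\max\bigl(\delta^{-\alpha},(1+|z|^2)^{\alpha/2}\bigr)
\]
is plurisubharmonic whenever $\alpha\ge 2$. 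Thus $\hat\delta^{-\alpha}$ is a ready-made weight with exactly the prescribed boundary growth, and the whole difficulty is analytic rather than one of producing a weight.

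Second, I would reduce the Casorati--Weierstrass requirement to a countable one. Since the cluster set only increases as the neighborhood grows, if $C_\Omega(F,p)={\mathbb C}_\infty^m$ holds for all $p$ in some dense subset of $\partial\Omega\cup\{\infty\}$, then it holds for every $p$; hence it suffices to fix a countable dense set $\{p_k\}\subset\partial\Omega\cup\{\infty\}$ and arrange the full cluster set only there. To this end I would choose nodes $\{a_j\}\subset\Omega$, separated on the scale $\hat\delta$ (with the balls $B(a_j,c\,\hat\delta(a_j))$ pairwise disjoint) and accumulating at each $p_k$, together with target values $b_j\in{\mathbb C}^m$ chosen so that, for a fixed countable dense set $\{w_l\}\subset{\mathbb C}_\infty^m$, every $w_l$ is approached by $b_j$ along some subsequence $a_j\to p_k$; for a coordinate of $w_l$ equal to $\infty$ one simply lets the corresponding coordinate of $b_j$ tend to infinity slowly, e.g. like $\log(1/\hat\delta(a_j))$, which stays far below the allowed bound. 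Realizing $F(a_j)=b_j$ then makes the cluster set contain every $w_l$ at every $p_k$, hence equal to ${\mathbb C}_\infty^m$ everywhere; since the coordinates decouple, this is $m$ independent scalar interpolation problems.

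Third, the analytic core is to solve, for one coordinate, $f(a_j)=c_j$ with $|f|\le C_1e^{C_2\hat\delta^{-\alpha}}$. I would take a smooth interpolant $g=\sum_j c_j\chi_j$, with $\chi_j$ a cutoff equal to $1$ near $a_j$ and supported in $B(a_j,c\,\hat\delta(a_j))$, so that $\dbar g$ is supported in the disjoint shells where the $\chi_j$ vary. Solving $\dbar u=\dbar g$ by Hörmander's theorem against $\phi=C_2'\hat\delta^{-\alpha}+\Psi$, where $\Psi$ is a globally plurisubharmonic function with logarithmic poles $2n\log|z-a_j|$ at the nodes, forces $u(a_j)=0$ through finiteness of $\int|u|^2e^{-\phi}$, so that $f:=g-u$ is holomorphic and interpolates; a sub-mean-value estimate on $B(a_j,c\,\hat\delta)$ then turns the global weighted $L^2$ bound into the pointwise bound $|f|\le C_1e^{C_2\hat\delta^{-\alpha}}$.

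The main obstacle is the quantitative choice of $\alpha$ and the construction of $\Psi$. The exponent is forced by requiring $\hat\delta^{-\alpha}$ (with the requisite strict positivity of its complex Hessian on the shells) to dominate simultaneously the weighted $L^2$ norm of $\dbar g$, which carries a factor $\hat\delta^{-1}$ from differentiating the cutoffs, the collective mass of the logarithmic poles of $\Psi$, governed by how densely the separated nodes accumulate at the boundary, and---in the unbounded case---the growth at infinity encoded by the $(1+|z|^2)^{-1/2}$ half of $\hat\delta$. Balancing these factors dimension-freely is what I expect to yield $\alpha=6$. When $\Omega$ is bounded, $\hat\delta$ is comparable to $\delta$, the $(1+|z|^2)$ term plays no role, and $\delta$ may be replaced by a smooth comparable function with clean gradient and Hessian bounds; the several lost powers then collapse and the same scheme should run with $\alpha=2$.
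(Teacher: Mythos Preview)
Your overall plan---H\"ormander interpolation with cutoffs and logarithmic poles at nodes separated on the scale $\hat\delta$, followed by a sub-mean-value argument to pass from weighted $L^2$ to pointwise bounds---is exactly the paper's strategy. Your reduction lemma (full cluster set on a dense subset of $\partial\Omega\cup\{\infty\}$ implies full cluster set everywhere) is correct; the paper organizes the nodes instead via a Whitney decomposition of $\Omega$, which automatically produces a $\hat\delta$-separated family accumulating at \emph{every} boundary point, but this is bookkeeping rather than a different idea.

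The genuine gap is in the weight. You take $\varphi=C\hat\delta^{-\alpha}+\Psi$ and observe that $\hat\delta^{-\alpha}$ is plurisubharmonic for $\alpha\ge2$, but plurisubharmonicity is not enough: to absorb the factor $\hat\delta^{-1}$ coming from $\bar\partial\chi_j$ (and to compensate the non-psh cutoff in $\Psi$) you need $i\partial\bar\partial\varphi\gtrsim\hat\delta^{-2}\,i\partial\bar\partial|z|^2$ on the shells. The function $\hat\delta^{-\alpha}$ does \emph{not} provide this. Wherever $\hat\delta=\delta$---which is the generic situation near $\partial\Omega$---the Levi form of $\delta^{-\alpha}=\exp(-\alpha\log\delta)$ inherits every degeneracy of $-\log\delta$: for a half-space it annihilates all complex tangential directions, and no choice of $\alpha$ repairs this. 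Your suggestion, in the bounded case, of replacing $\delta$ by a smooth comparable function with ``clean Hessian bounds'' does not help either, since a merely comparable function has no reason to be plurisubharmonic.

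The paper's device, which you are missing, is to use the \emph{product}
\[
\psi(z)=(1+|z|^2)\,\hat\delta(z)^{-\alpha}=\exp\bigl(-\alpha\log\hat\delta+\log(1+|z|^2)\bigr),
\]
so that strict positivity is supplied by the second summand:
\[
i\partial\bar\partial\psi\;\ge\;\psi\,i\partial\bar\partial\log(1+|z|^2)\;=\;\frac{\hat\delta^{-\alpha}}{1+|z|^2}\,i\partial\bar\partial|z|^2\;\ge\;\hat\delta^{-\alpha+2}\,i\partial\bar\partial|z|^2.
\]
This is what pins down the exponents. In the unbounded case one needs $\alpha-2\ge2$, i.e.\ $\alpha=4$; the resulting $L^2$ weight $e^{-C(1+|z|^2)\hat\delta^{-4}}$ is bounded below by $e^{-C\hat\delta^{-6}}$ (since $1+|z|^2\le\hat\delta^{-2}$), whence the pointwise exponent $6$. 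For bounded $\Omega$ the factor $(1+|z|^2)$ is bounded above and below, so already $\alpha=2$ gives curvature $\gtrsim\hat\delta^{-2}\,i\partial\bar\partial|z|^2$ and the pointwise exponent stays at $2$. Once you insert this weight, the rest of your outline goes through essentially as written.
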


In general, the growth order of $F$ can not be improved into polynomial growth in $\hat{\delta}^{-1}$. To see this, simply take $\Omega={\mathbb C}$ and note that every entire function with polynomial growth in $1+|z|$ has to be a polynomial. Yet it is still interesting to know the optimal number $\alpha$ such that  ${\rm{CW}}_\alpha(\Omega,{\mathbb C}^m)\neq \emptyset$.

Recall that a complex space $M$ is said to be dominated by another complex space $M'$ if there is a dominant morphism $F:M'\rightarrow M$, i.e., a holomorphic map with dense image. Throughout this paper, a complex space is always reduced. The point is that the pullback of $F$ defines an injective homomorphism $F^\ast: {\mathcal M}(M)\rightarrow {\mathcal M}(M')$ between fields of meromorphic functions on $M$ and $M'$, respectively. We remark that the notion of "dominated" adapted here comes from algebraic geometry (see e.g., \cite{HartshorneBook}\,), which is different from the more commonly used notion of dominability where one requires the existence of a map with surjective derivative at some point. An immediate consequence of Theorem 1.1 is that ${\mathbb C}^m$ is dominated by any pseudoconvex domain in ${\mathbb C}^n$. If an irreducible complex space $M$ admits a nonconstant $f$, then $f(M)$ is a domain in ${\mathbb C}$, so that ${\mathbb C}^m$ is also dominated by $M$. It is reasonable to introduce the following

\begin{definition} A complex space is said to be\/ {\it universally dominated} if it is dominated by any irreducible complex space with a nonconstant holomorphic function.
\end{definition}

This concept is inspired by Winkelmann \cite{Winkelmann05} (see also \cite{ForstnericWinkelmann}), who proved that each irreducible complex space is dominated by any irreducible complex space admitting a nonconstant bounded holomorphic function, and called the latter a universally dominating space. There have been a few interesting examples of universally dominated spaces, especially for complex surfaces (cf. \cite{BuzzardLu}, \cite{Winkelmann05}). In this paper, we obtain some basic properties of universally
dominated spaces, together with a number of new examples, including all Hopf manifolds for instance. It is easy to see that universally dominated spaces are anti-hyperbolic, i.e., the Kobayashi pseudodistance vanishes identically. Nevertheless, universal dominability is a bimeromorphic invariant, which implies in particular that all Kummer manifolds are universally dominated.

There are several other important anti-hyperbolic manifolds, for instance, elliptic manifolds introduced by Gromov \cite{GromovOka} or Oka manifolds introduced by Forstneri$\check{\rm c}$ \cite{ForstnericBook}. It is known from  Forstneri$\check{\rm c}$-Ritter \cite{ForstnericRitter} that every Oka manifold is dominated by ${\mathbb C}$, thus it is universally dominated in view of Proposition 4.1. On the other hand, Andrist-Wold \cite{AndristWold} showed that the complement of the unit closed ball in ${\mathbb C}^n$ for $n\ge 3$ fails to be elliptic (or even subelliptic), yet it is universally dominated (see Example 4.3). It is not known whether there exists a universally dominated manifold which is not Oka.

  A point $p\in \partial \Omega\cup \{\infty\}$ is said to be a Picard point of a function $f\in {\mathcal O}(\Omega)$ if $f$ assumes infinitely often in any neighborhood of $p$ all complex numbers with at most one exception. Similar as Theorem 1.5 in \cite{ForstnericGlobevnik}, one may define a Picard point for a holomorphic map $F:\Omega\rightarrow {\mathbb C}^m$ if it is a Picard point for any function of the form $P(F)$ where $P$ is a nonconstant polynomial on ${\mathbb C}^m$. For each $\alpha>0$, we define ${\rm P}_{\alpha}(\Omega)$  to be the set of holomorphic functions $f$ on $\Omega$ whose sets of Picard points coincide with $\partial \Omega\cup \{\infty\}$ ($\partial \Omega$ if $\Omega$ is bounded), and there are constants $C_1,C_2>0$ such that
$$
 |f(z)|\le C_1 e^{C_2 \hat{\delta}(z)^{-\alpha}}, \ \ \ z\in \Omega.
$$

\begin{theorem} If\/ $\Omega\subset {\mathbb C}^n$ is a pseudoconvex domain, then\/ ${\rm P}_\alpha(\Omega)$ is nonempty for any $\alpha>6$. Furthermore, if\/ $\Omega$ is bounded, then\/ ${\rm P}_\alpha(\Omega)$ is nonempty for any $\alpha>2$.
\end{theorem}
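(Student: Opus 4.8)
The plan is to reduce the Picard statement to a covering property through the substitution $f=e^{g}$, and then to produce $g$ by strengthening the construction behind Theorem 1.1. The advantage of passing to $f=e^{g}$ is that $e^{g}$ omits the value $0$ no matter what $g$ is, so $0$ is automatically the only candidate for an exceptional value; hence, to make a point $p\in\partial\Omega\cup\{\infty\}$ a Picard point of $f$ it is enough to arrange that $e^{g}$ assumes every \emph{nonzero} value infinitely often in every neighborhood of $p$. This holds as soon as, for each $r>0$, the image $g(\Omega\cap B(p,r))$ contains annuli $\{M<|w|<2M\}$ for a sequence $M\to+\infty$: for any $c\neq 0$ the points $\log c+2\pi i k$ have modulus $\sim 2\pi|k|$ and therefore fall into these annuli for infinitely many $k$, each giving a solution of $e^{g}=c$ in $\Omega\cap B(p,r)$. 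This is exactly the mechanism by which $e^{1/z}$ realizes the big Picard theorem at an isolated essential singularity, now imposed at every boundary point at once.

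For the growth one only needs a one-sided estimate. Since $|f|=e^{\Re g}\le e^{|g|}$, the required bound $|f|\le C_1e^{C_2\hat\delta^{-\alpha}}$ follows from $\Re g\le C_2\hat\delta^{-\alpha}+C$ on $\Omega$; there is no constraint on the side where $\Re g\to-\infty$, which is what permits $e^{g}$ to reach values of small modulus. Thus everything is transferred to the construction of a single holomorphic $g$ on $\Omega$ satisfying $\Re g\lesssim\hat\delta^{-\alpha}$ and the annulus-covering property above near every finite boundary point and near $\infty$.

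To build $g$ I would follow the proof of Theorem 1.1, in which a sequence of points (or small balls) clustering at every point of $\partial\Omega\cup\{\infty\}$ is combined with H\"ormander's $L^2$-estimates for $\dbar$, using a plurisubharmonic weight adapted to $\hat\delta$, to produce a function whose cluster set at each such point is all of $\mathbb{C}_\infty$. The modification is to prescribe, on a small ball $B(w_j,\rho_j)$ about each chosen point $w_j$, that $g$ be close to an explicit model $g_j$ of logarithmic/polar type whose image already \emph{contains} a prescribed annulus $\{M_j<|w|<2M_j\}$, the radii $M_j$ being chosen so that, for every fixed $p$, the models carried by the points $w_j\to p$ produce annuli of arbitrarily large radius inside each neighborhood of $p$. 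The weight must be taken slightly stronger than the one giving exponent $6$ (resp.\ $2$) in Theorem 1.1; this is precisely what costs an arbitrarily small amount in the exponent and explains the strict inequality $\alpha>6$ (resp.\ $\alpha>2$), while the factor $(1+|z|^{2})^{-1/2}$ built into $\hat\delta$ simultaneously controls the growth near $\infty$ and lets $\infty$ be handled exactly like a finite boundary point.

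The main obstacle is the passage from ``$g$ is close to a model covering an annulus'' to ``$g$ genuinely covers that annulus.'' Here the argument principle is decisive: the $\dbar$-correction is small in $L^{2}$, hence uniformly small on a slightly shrunken ball $B(w_j,\tfrac12\rho_j)$, and Rouch\'e's theorem then shows that $g-w_0$ has the same number of zeros as $g_j-w_0$ there for each target $w_0$ in the annulus, so that every such $w_0$ is attained. Executing this uniformly in $j$, while keeping the pointwise bound $\Re g\lesssim\hat\delta^{-\alpha}$ and guaranteeing that the annuli exhaust neighborhoods of $\infty$ at \emph{every} $p$ simultaneously, is the technical core of the proof. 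Once it is done, $f=e^{g}$ belongs to $\mathrm{P}_\alpha(\Omega)$; the bounded case is identical, except that $\hat\delta$ reduces to the boundary distance and the weight can be taken weaker, lowering the exponent from $6$ to $2$.
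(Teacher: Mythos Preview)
Your reduction via $f=e^{g}$ is clever for the Picard mechanism, but it creates a quantitative mismatch that breaks the growth estimate. To land in ${\rm P}_\alpha(\Omega)$ you need $\Re g\le C_2\hat\delta^{-\alpha}+C$, i.e.\ a \emph{polynomial} upper bound on $\Re g$ in terms of $\hat\delta^{-1}$. However, the H\"ormander machinery you invoke (following Theorem~1.1 and Proposition~2.2) uses weights of the form $e^{-C(1+|z|^2)\hat\delta^{-\beta}}$, and the pointwise bound extracted from the $L^{2}$ estimate via the sub-mean-value inequality is $|g|\le C_1 e^{C_2\hat\delta^{-\gamma}}$ --- an \emph{exponential} bound. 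The $\bar\partial$-correction $u$ is holomorphic, and there is no mechanism to force $\Re u$ (or $\Re g$) to be polynomially bounded while $|u|$ grows exponentially. Consequently $|f|=e^{\Re g}$ is only bounded by a \emph{double} exponential in $\hat\delta^{-1}$, which is far outside ${\rm P}_\alpha(\Omega)$ for any finite $\alpha$. This is not a technicality that a sharper weight fixes: the cutoffs on balls of radius $\sim\hat\delta$ force $|\bar\partial\chi|\sim\hat\delta^{-1}$, which in turn forces the Hessian of the weight to be at least $\sim\hat\delta^{-2}$ in the Euclidean metric, and this is what drives the exponential output bound in Lemma~2.1 and Proposition~2.2.

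The paper circumvents this entirely by constructing $f$ \emph{directly}, with no exponential substitution. One first finds a dense set $\{p_j\}\subset\partial\Omega$ admitting mutually disjoint inner balls $B_j$, and inside each $B_j$ a radial sequence $\{z^{j\mu}\}$ approaching $p_j$ with $k_{B_j}(z^{j\mu},z^{j\mu+1})\to 0$. Proposition~2.2/(2) then produces $f\in\mathcal O(\Omega)$ with $f(z^{j(2\mu-1)})=0$, $f(z^{j(2\mu)})=1$, and the correct growth $|f|\le C_1e^{C_2\hat\delta^{-\alpha-2}}$. The Picard property at each $p_j$ is obtained by contradiction through a Lindel\"of-type principle (Hahn's theorem, Proposition~5.4): if $f$ omitted two values near $p_j$ it would be normal there, and the Kobayashi-close sequence would force a well-defined non-tangential limit --- impossible since $f$ oscillates between $0$ and $1$ along the sequence. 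Finally, the set of Picard points is closed (Lemma~5.6), so density of $\{p_j\}$ finishes the argument. The point is that normality/Lindel\"of replaces your Rouch\'e/covering step and, crucially, works with the function $f$ itself rather than its logarithm, so the growth bound comes straight out of the interpolation.
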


For each bounded domain $\Omega\subset {\mathbb C}^n$, we define the Bergman space to be
$$
   A^2_\alpha (\Omega):=\left\{f\in {\mathcal O}(\Omega):\int_\Omega |f|^2 \delta^\alpha<\infty\right\},\ \ \ \alpha>-1.
$$
We also write $A^2(\Omega)$ for $A^2_0(\Omega)$ for the sake of simplicity.

\begin{proposition} Let $\Omega\subset {\mathbb C}^n$ be a bounded pseudoconvex domain with $n\ge 2$.
 \begin{enumerate}
  \item Let $p\in \partial \Omega$ be a point which admits an inner ball, i.e., a ball contained in $\Omega$ whose boundary touches $\partial \Omega$ only at $p$. Then for each $\alpha>0$ there is a function in $A^2_\alpha(\Omega)$ with $p$ as Picard point.
  \item    Furthermore, if there is a negative plurisubharmonic (psh) function $\psi$ on $\Omega$ satisfying $C_1 |z-p|^{\beta_1}\le -\psi(z)\le C_2 |z-p|^{\beta_2}$ for some positive constants $C_2,C_2,\beta_1,\beta_2$, then there is a holomorphic function in $A^2(\Omega)$ with $p$ as Picard point.
      \end{enumerate}
\end{proposition}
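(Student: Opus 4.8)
The plan is to build the required function on a small analytic disc inside $\Omega$ that accumulates at $p$, arrange that it covers larger and larger discs in the target while keeping its (weighted) $L^2$-norm finite, and then globalize by correcting a smooth quasi-extension with a solution of the $\dbar$-equation on the pseudoconvex domain $\Omega$. The guiding point is that a Picard point of the restriction of a function to an analytic disc through $p$ is automatically a Picard point of the function on $\Omega$, because every neighbourhood of $p$ in $\Omega$ contains a neighbourhood of $p$ in the disc.

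For the local model, fix the inner ball $B=B(c,\rho)$ with $\overline{B}\cap\partial\Omega=\{p\}$ and choose points $z_k\to p$ along the segment $[c,p]$ together with disjoint discs $D_k=B(z_k,\rho_k)\subset B$, with $\rho_k\downarrow0$ very fast. On $D_k$ let $g_k$ be the affine map $z\mapsto R_k(z-z_k)/\rho_k$, which sends $D_k$ biholomorphically onto the disc $\{|w|<R_k\}$, where $R_k\to\infty$. With cut-offs $\chi_k$ equal to $1$ on $\tfrac12 D_k$ and supported in $D_k$, set $\tilde f=\sum_k\chi_k g_k$ and $v=\dbar\tilde f=\sum_k(\dbar\chi_k)g_k$, a form supported in the annuli $D_k\setminus\tfrac12 D_k$. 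I would then solve $\dbar u=v$ on $\Omega$ with $u$ uniformly small, say $|u|\le 1$, on each $\tfrac12 D_k$; the function $f:=\tilde f-u$ is holomorphic on $\Omega$, and for every $w_0$ with $|w_0|<R_k/2-1$ an application of Rouch\'e's theorem to $g_k-w_0$ on $\tfrac12 D_k$ (where $|g_k-w_0|\ge R_k/2-|w_0|>1\ge|u|$ on the boundary) shows that $f-w_0$ has a zero there. Since $R_k\to\infty$ and the $D_k$ accumulate only at $p$, $f$ assumes every value infinitely often in each neighbourhood of $p$, so $p$ is a Picard point of $f$ with no exceptional value at all.

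Everything then reduces to solving $\dbar u=v$ with control both of the pointwise size of $u$ near $p$ and of its global weighted norm. For part (1) I would run H\"ormander's estimate on $\Omega$ with the weight $\delta^{\alpha}$ reinforced by a locally finite psh weight that blows up along the $z_k$ (for instance a sum $\sum_k N_k\log|z-z_k|$ with $N_k$ and $\rho_k$ tuned to the $R_k$): choosing $\rho_k$ small makes $\int_\Omega|v|^2\delta^\alpha$ as small as needed, forces $|u|\le1$ on each $\tfrac12 D_k$, and yields $\int_\Omega|f|^2\delta^\alpha\le C\sum_k R_k^2\rho_k^2<\infty$ for every $\alpha>0$, using that the inner ball makes $\delta$ comparable to the inner-ball distance near $p$. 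For part (2) the weight $\delta^\alpha$ is unavailable ($\alpha=0$), and this is exactly where $\psi$ enters: the function $\phi:=-\log(-\psi)$ is psh and has self-bounded gradient, since
\begin{equation*}
 i\p\dbar\phi=\frac{i\p\dbar\psi}{-\psi}+i\p\phi\wedge\dbar\phi\ \ge\ i\p\phi\wedge\dbar\phi .
\end{equation*}
Thus a Donnelly--Fefferman type (twisted) estimate is available: I solve $\dbar u=v$ with the main weight $|z-p|^{-2n}$, which forces $u$ to vanish at $p$ so that the covering property survives near $p$, while the missing curvature is supplied by $\phi$; the two-sided bound $C_1|z-p|^{\beta_1}\le-\psi\le C_2|z-p|^{\beta_2}$ then lets me convert between powers of $|z-p|$ and powers of $-\psi$ so that the resulting $u$, and hence $f$, lies in the unweighted Bergman space $A^2(\Omega)$.

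The main obstacle is precisely this simultaneous control in the $\dbar$-step: the correction $u$ must be made uniformly small on the sub-discs $\tfrac12 D_k$ that accumulate at $p$, so that Rouch\'e preserves the covering, while at the same time $u$ stays in the prescribed space (weighted for (1), unweighted for (2)). In part (2) the balance is delicate, and the two-sided power bound on $\psi$ is what reconciles ``$u$ vanishes fast enough at $p$'' with ``$u$ is square-integrable without weight''; checking that a single choice of exponents achieves both conditions is the heart of the argument.
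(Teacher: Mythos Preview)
Your route is genuinely different from the paper's and, as you yourself flag, the obstacle you name is real and unresolved. The sentence ``choosing $\rho_k$ small makes $\int_\Omega|v|^2\delta^\alpha$ as small as needed, forces $|u|\le1$ on each $\tfrac12 D_k$'' is the gap: a small global weighted $L^2$-norm of $u$ does not give a uniform pointwise bound on discs whose radii tend to zero. Concretely, on $\tfrac14 D_k$ the mean-value inequality only gives $|u|^2\le C_n\rho_k^{-2n}\int_{\tfrac12 D_k}|u|^2$, and the factor $\rho_k^{-2n}$ blows up. Adding point-singularities $\sum N_k\log|z-z_k|$ to the weight forces $u$ to vanish to prescribed order at the centers $z_k$, but this still does not bound $|u|$ on the full $\tfrac12 D_k$ unless you let $N_k\to\infty$, and then the curvature correction needed to keep the weight psh (to absorb the cutoff errors $\sim N_k/\rho_k^2$) spirals out of control. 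In part (2) the weight $2n\log|z-p|$ is pluriharmonic on $\Omega$ (since $p\notin\Omega$), so the Donnelly--Fefferman term $\phi=-\log(-\psi)$ would have to supply \emph{all} the curvature; and again the conclusion ``$u$ vanishes at $p$'' is an $L^2$ statement about a boundary point, not the pointwise bound on interior discs that Rouch\'e needs. There is also a dimensional ambiguity: $D_k=B(z_k,\rho_k)$ is an $n$-ball, so the scalar affine map $z\mapsto R_k(z-z_k)/\rho_k$ does not send it biholomorphically onto a planar disc; you presumably intend a projection onto one complex coordinate, and Rouch\'e on the corresponding complex line --- this should be made explicit, since it is also where $n\ge2$ enters (the annular volume $\rho_k^{2n-2}$ only helps when $n\ge2$).

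The paper sidesteps all of this by a completely different mechanism. After slicing to $n=2$ via Ohsawa--Takegoshi, it takes \emph{two} transversal complex lines $L_1,L_2$ through $p$ inside the inner ball, sets $f\equiv1$ on $\Omega\cap L_1$ and $f\equiv0$ on $\Omega\cap L_2$, and extends $f$ from $S=(\Omega\cap L_1)\cup(\Omega\cap L_2)$ to $\Omega$ by Ohsawa's general $L^2$ extension theorem with the weight $\varphi=\alpha\log(1/\delta)$ (respectively $\varphi=0$ for part (2), with the auxiliary $\Psi\in\#(S)$ built from $-\log(-\psi)$). The extension lies directly in $A^2_\alpha(\Omega)$ (resp.\ $A^2(\Omega)$) with no correction term whose size must be tracked. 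The Picard property then follows from the Cima--Krantz Lindel\"of principle: if $F$ omitted two values near $p$ it would be normal on the inner ball, hence have a unique non-tangential limit at $p$, contradicting $F\to1$ along $L_1$ and $F\to0$ along $L_2$. Thus the paper uses two directions with conflicting limits and a Lindel\"of argument, rather than one direction with a Rouch\'e covering; the extension theorem replaces your $\dbar$-patching, and no pointwise control of any solution is required.
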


We remark that the first conclusion fails when $n=1$ or $\alpha=0$. To see this, consider simply the domain ${\mathbb D}^\ast={\mathbb D}\backslash \{0\}$ or ${\mathbb D}\times {\mathbb D}^\ast$, where ${\mathbb D}$ is the unit disc in ${\mathbb C}$.  If $f\in A^2_\alpha({\mathbb D}^\ast)$ with $\alpha>0$, then $z=0$ would be either a removable singularity or a pole of $f$, which is not a Picard point. If $f\in A^2({\mathbb D}\times {\mathbb D}^\ast)$, then $f\in A^2({\mathbb D}^2)$ so that the origin is not a Picard point for $f$. Nevertheless, there is an inner ball at the origin for both cases. Note also that the condition in the second conclusion holds when $\Omega$ is of finite type in the sense of D'Angelo (cf. \cite{Catlin87}).

The proofs of Theorem 1.2 and Proposition 1.3 rely heavily on Lindel\"of type principles due to Cima-Krantz \cite{CimaKrantz} and Hahn \cite{HahnNormal}, which also provide new ways of constructing holomorphic maps from a given domain\/ {\it onto} some ${\mathbb C}^m$. For instance, we are able to show

\begin{proposition}
Let $\Omega$ be a domain in ${\mathbb C}^n$. Suppose there is a point $p\in \partial \Omega$ such that $\Omega$ contains a cone $\Lambda_p$ with vertex at $p$, and there is a supporting complex hyperplane of $\Omega$ at $p$. Then there is a holomorphic map from $\Omega$ onto ${\mathbb C}^n$ which is locally biholomorphic.
\end{proposition}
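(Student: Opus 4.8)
The plan is to reduce the problem, by an explicit rational change of coordinates adapted to the supporting hyperplane, to covering a product ``tube'' that the cone is guaranteed to contain, and then to write down a locally biholomorphic map of that tube onto $\mathbb{C}^n$. First I would normalize: after an affine change we may assume $p=0$ and that the supporting complex hyperplane is $H=\{z_1=0\}$, so that $z_1\neq 0$ on $\Omega$. Then the involutive rational map
\[
 \Phi(z)=\Big(\tfrac{1}{z_1},\tfrac{z_2}{z_1},\dots,\tfrac{z_n}{z_1}\Big)
\]
is a biholomorphism of $\{z_1\neq 0\}$ onto $\{w_1\neq 0\}$, so $\Phi|_\Omega$ is holomorphic and locally biholomorphic. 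This is the only place the supporting hyperplane is used: it guarantees $\Phi$ is globally single valued and pole free on $\Omega$. Writing the cone as $\Lambda_p=\{tv:0<t<\rho,\ v\in U\}$ with $U$ open on the sphere and (after shrinking) $|v_1|\geq\varepsilon_0$ on $U$, and setting $v'=(v_2,\dots,v_n)$, the point $\Phi(tv)=(1/(tv_1),\,v'/v_1)$ shows, since $v\mapsto(\arg v_1,\,v'/v_1)$ is a submersion and $t$ is free, that $\Phi(\Lambda_p)$ contains a product tube
\[
 W=\{\,w:\ |w_1|>R,\ \arg w_1\in(a,b),\ w'\in D'\,\},
\]
where $w'=(w_2,\dots,w_n)$, $D'\subset\mathbb{C}^{n-1}$ is an open ball and $(a,b)$ an arc. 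It therefore suffices to produce $G$ on $\{w_1\neq 0\}$, locally biholomorphic there, with $G(W)=\mathbb{C}^n$; then $F=G\circ\Phi$ is locally biholomorphic on $\Omega$ and $F(\Omega)\supseteq G(\Phi(\Lambda_p))\supseteq G(W)=\mathbb{C}^n$.

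For the model map I would use a triangular ansatz exploiting the single direction to infinity $w_1$ both to furnish one full coordinate and to amplify the bounded variables $w'$. With $c'=(c_2,\dots,c_n)$ the center of $D'$, set
\[
 G(w)=\Big(g(w_1),\ (w_2-c_2)\,w_1,\ \dots,\ (w_n-c_n)\,w_1\Big),
\]
for a holomorphic $g$ on $\{w_1\neq 0\}$ with $g'\neq 0$. The Jacobian of $G$ is lower triangular with determinant $g'(w_1)\,w_1^{\,n-1}$, nonzero on $\{w_1\neq 0\}$, so $G$, hence $F$, is locally biholomorphic. For surjectivity, given $\xi=(\xi_1,\xi')$ I would first solve $g(w_1)=\xi_1$ with $w_1\in\Sigma:=\{|w_1|>R,\ \arg w_1\in(a,b)\}$ and $|w_1|$ as large as I please, then set $w_j=c_j+\xi_j/w_1$, which lies in $D'$ once $|w_1|$ exceeds $\max_j|\xi_j|/\mathrm{rad}(D')$; this yields $w\in W$ with $G(w)=\xi$. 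So everything reduces to a single scalar requirement on $g$.

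Hence the crux is the one variable statement: there is a holomorphic $g$ on $\{w_1\neq 0\}$ with $g'\neq 0$ everywhere whose restriction to the sector $\Sigma$ is \emph{onto} $\mathbb{C}$ and assumes every value at points of arbitrarily large modulus. Since $\infty$ is reached inside $\Sigma$, one may first widen by a power $w_1\mapsto w_1^m$ (locally biholomorphic on $\{w_1\neq0\}$), which carries $\Sigma$ onto a full exterior disc as soon as $m(b-a)\geq 2\pi$, reducing matters to a locally univalent function on a punctured neighbourhood of $\infty$ (equivalently on $\mathbb{D}^\ast$, or on $\mathbb{D}$ after lifting) with image all of $\mathbb{C}$. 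The existence of such a locally univalent surjection onto $\mathbb{C}$ is exactly what the Lindel\"of--Picard machinery behind Theorem 1.2 and Proposition 1.3 provides here (it is also a classical fact of the type problem), and the ``escaping preimages'' property is automatic once the surjectivity comes from the behaviour at the essential singularity.

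The main obstacle is precisely this last point: forcing $g|_\Sigma$ to be \emph{literally onto} $\mathbb{C}$ rather than merely dense or onto $\mathbb{C}\setminus\{\mathrm{pt}\}$. Any quick choice built from $\exp$ (for instance $g(w_1)=\exp(w_1^m)$, already locally biholomorphic on $\{w_1\neq0\}$ and mapping $\Sigma$ onto $\mathbb{C}^\ast$ with escaping preimages) omits a single value, and that omitted value propagates to an entire missed hyperplane $\{\xi_1=\mathrm{const}\}$ in the target. By the little Picard theorem a locally univalent map cannot repair this defect by an elementary modification, since inserting a zero of $g$ tends to force a zero of $g'$. Exhibiting a locally univalent $g$ on the sector with \emph{no} omitted value — which is where the genuine content, and the use of the normal/Lindel\"of function theory, resides — is the step I expect to be hardest; once it is secured, the triangular ansatz and the tube from the first step finish the proof mechanically.
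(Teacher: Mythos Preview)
Your scaffold is essentially the paper's: normalize so that the supporting hyperplane is a coordinate hyperplane, build a locally biholomorphic triangular map using the coordinate normal to it both as the ``interesting'' output coordinate and as the amplifier for the remaining bounded variables, and thereby reduce everything to a one-variable question about a function $g$ on $\mathbb{C}^*$ with $g'\neq 0$. The paper skips your intermediate inversion $\Phi$ and works directly in the cone near the vertex, writing $F=(z_1/z_n^2,\dots,z_{n-1}/z_n^2,\,h(z_n))$ with $h\in\mathcal{O}(\mathbb{C}^*)$, but this difference is cosmetic.

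Where the paper parts from your plan is precisely at the one-variable step, and the point is that you are making it harder than necessary. You try to produce in one stroke a locally univalent $g$ on the sector with \emph{no} omitted value, and you reach (somewhat vaguely) toward the normal-function machinery behind Theorem~1.2 and Proposition~1.3 to justify it. The paper instead \emph{accepts} the single omitted value: its $h$ is just $e^{1/z^2}$ up to an affine change (Corollary~5.2, which uses only the classical one-variable Lindel\"of principle to certify the Picard point), so that $F(\Omega)\supseteq \mathbb{C}^{n-1}\times\mathbb{C}^*$. The missing value is then filled in by a \emph{separate} post-composition in that one coordinate, via Lemma~6.2: Calabi's entire function $f(z)=\int_0^z e^{-w^2}\,dw$ has $f'\neq 0$ everywhere and, being transcendental, assumes some value at two distinct points; after translating one of them to $0$, the restriction $f|_{\mathbb{C}^*}$ is a locally biholomorphic surjection $\mathbb{C}^*\to\mathbb{C}$. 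In your notation, keep $g(w_1)=\exp(w_1^{m})$ --- which you already have, complete with escaping preimages in $\Sigma$ --- and simply post-compose $G$ with $(f,\mathrm{id},\dots,\mathrm{id})$. No normal families are needed, and the ``main obstacle'' you flag dissolves once you allow the two-step factoring $\Sigma\to\mathbb{C}^*\to\mathbb{C}$ rather than insisting on a single map $\Sigma\to\mathbb{C}$.
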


In particular, every convex domain or bounded domain with Lipschitz boundary in ${\mathbb C}^n$ admits a holomorphic map onto ${\mathbb C}^n$ which is locally biholomorphic. It remains open whether an arbitrary domain in ${\mathbb C}^n$, $n\ge 2$, admits a holomorphic map onto ${\mathbb C}^n$ which is locally biholomorphic. A related and even more interesting question is whether the Runge approximation theorem holds in the class of locally biholomorphic maps from a domain in ${\mathbb C}^n$ to ${\mathbb C}^n$ (not necessary onto). Fornaess and Stout proved that the unit ball or polydisc admits a locally biholomorphic, finite holomorphic map onto\/ {\it any} complex manifold of same dimension (cf. \cite{FornaessStout77}, \cite{FornaessStout82}). Slightly later, L${\o}$w \cite{Low83} showed that each bounded domain with $C^2-$boundary in ${\mathbb C}^n$ admits a holomorphic map onto any complex manifold of dimension $n$. His map is neither locally biholomorphic nor finite, however.

 It is proved in \cite{ForstnericGlobevnik} that, if $F:{\mathbb D}\rightarrow {\mathbb C}^2$ is a proper holomorphic map, then (almost) all points of the circle are Casorati-Weierstrass (or even Plessner) points, for any functions $P(f_1,f_2)$ where $P$ is a rational function on ${\mathbb C}^2$.

 \section{Preliminaries}

 Let $\Omega$ be a pseudoconvex domain in ${\mathbb C}^n$. Put $\delta(z):={\rm dist\,}(z,\Omega^c)$ and
 $$
 \hat{\delta}(z)=\min\{\delta(z),(1+|z|^2)^{-1/2}\}.
 $$
By Oka's theorem, $-\log \hat{\delta}$ is a psh exhaustion function on $\Omega$. Since $|\delta(z)-\delta(w)|\le |z-w|$ and
 $$
 \left|(1+|z|^2)^{-1/2}-(1+|w|^2)^{-1/2}\right|\le |z-w|,
 $$
 it follows that
 $
 |\hat{\delta}(z)-\hat{\delta}(w)|\le 2|z-w|.
 $
 Thus
 \begin{equation}
 \hat{\delta}(z)\asymp \hat{\delta}(w),\ \ \ \ \ \   \forall\,z\in B\left(w,c\hat{\delta}(w)\right), \ c<1/2,
 \end{equation}
  where $A\asymp B$ means $C_1A\le B\le C_2A$ for two constants $C_1,C_2$ depending only on $n,c$, and $B(z,r)$ stands for the ball with center $z$ and radius $r$.
  We have the following elementary lemma:

  \begin{lemma}
  If $f$ is a holomorphic function on $\Omega$ such that
  $$
  \int_\Omega |f|^2 e^{-a (1+|z|^2)\hat{\delta}^{-\alpha}}<\infty
  $$
  for some $a,\alpha>0$, then there are constants $C_1,C_2>0$ such that
$$
 |f(z)|\le C_1 e^{C_2 \hat{\delta}(z)^{-\alpha-2}}, \ \ \ z\in \Omega.
$$
  \end{lemma}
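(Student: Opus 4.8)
The plan is to exploit the sub-mean-value inequality for the subharmonic function $|f|^2$ over small balls whose radius is comparable to $\hat{\delta}(z)$, and then to estimate the weight on each such ball by its value at the center. Fix $z\in\Omega$ and a constant $c<1/2$, and set $r=c\hat{\delta}(z)$. Since $r<\hat{\delta}(z)\le\delta(z)$, the ball $B(z,r)$ is contained in $\Omega$, so the sub-mean-value property of $|f|^2$ gives
$$
|f(z)|^2 \le \frac{1}{\mathrm{vol}\,B(z,r)}\int_{B(z,r)}|f|^2\,dV.
$$

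Next I would control the weight on $B(z,r)$ in terms of its value at $z$. By (2.1) we have $\hat{\delta}(w)\asymp\hat{\delta}(z)$ for $w\in B(z,r)$; and since $r\le (1+|z|^2)^{-1/2}/2$, a direct estimate yields $1+|w|^2\le C(1+|z|^2)$ there. Combining these produces a constant $C''>0$ with $(1+|w|^2)\hat{\delta}(w)^{-\alpha}\le C''(1+|z|^2)\hat{\delta}(z)^{-\alpha}$ for all $w\in B(z,r)$. Inserting the trivial identity $|f|^2 = (|f|^2 e^{-a(1+|w|^2)\hat{\delta}(w)^{-\alpha}})\,e^{a(1+|w|^2)\hat{\delta}(w)^{-\alpha}}$ into the mean-value inequality and pulling the (now bounded) exponential factor out, I bound $|f(z)|^2$ by
$$
\frac{e^{aC''(1+|z|^2)\hat{\delta}(z)^{-\alpha}}}{\mathrm{vol}\,B(z,r)}\int_\Omega|f|^2 e^{-a(1+|w|^2)\hat{\delta}(w)^{-\alpha}}\,dV ,
$$
where the integral is the finite quantity $M$ supplied by the hypothesis.

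Finally I would convert this into the desired form. The volume factor is $\mathrm{vol}\,B(z,r)=c_n(c\hat{\delta}(z))^{2n}$, so after taking square roots a factor $\hat{\delta}(z)^{-n}$ appears. The essential observation, and the source of the exponent $-\alpha-2$, is that $\hat{\delta}(z)\le(1+|z|^2)^{-1/2}$ forces $1+|z|^2\le\hat{\delta}(z)^{-2}$, whence $(1+|z|^2)\hat{\delta}(z)^{-\alpha}\le\hat{\delta}(z)^{-\alpha-2}$. Since $\hat{\delta}\le 1$, the polynomial factor $\hat{\delta}(z)^{-n}$ is dominated by $e^{C\hat{\delta}(z)^{-\alpha-2}}$ for a suitable constant $C>0$ (using $\log(1/t)\le C' t^{-\alpha-2}$ for $0<t\le 1$), and can be absorbed into the exponential. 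This yields constants $C_1,C_2>0$ with $|f(z)|\le C_1 e^{C_2\hat{\delta}(z)^{-\alpha-2}}$.

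I expect no serious obstacle here; the routine points are the two elementary estimates on $B(z,r)$ (for which (2.1) does most of the work) and the absorption of the volume factor. The one genuinely load-bearing step is recognizing that the factor $1+|z|^2$ is controlled by $\hat{\delta}(z)^{-2}$, which is exactly what upgrades the weight $\hat{\delta}^{-\alpha}$ of the hypothesis to $\hat{\delta}^{-\alpha-2}$ in the conclusion.
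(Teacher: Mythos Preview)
Your proof is correct and follows essentially the same route as the paper's: sub-mean-value on a ball of radius comparable to $\hat{\delta}(z)$, comparability of the weight via (2.1), the key inequality $1+|z|^2\le\hat{\delta}(z)^{-2}$, and absorption of the polynomial volume factor into the exponential. The only cosmetic difference is that the paper applies $1+|z|^2\le\hat{\delta}^{-2}$ to the weight before restricting to the ball, whereas you bound the weight on the ball first and invoke this inequality at the center afterwards.
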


  \begin{proof}
  Note that
  \begin{eqnarray*}
  \int_\Omega |f|^2 e^{-a (1+|z|^2)\hat{\delta}^{-\alpha}} & \ge &  \int_\Omega |f|^2 e^{-a \hat{\delta}^{-\alpha-2}}\ge  \int_{B\left(z,\hat{\delta}(z)/3\right)} |f|^2 e^{-a \hat{\delta}^{-\alpha-2}}\\
  & \ge & C |f(z)|^2 \hat{\delta}(z)^{2n}e^{-a'\hat{\delta}(z)^{-\alpha-2}}\\
  & \ge & C |f(z)|^2 e^{-a''\hat{\delta}(z)^{-\alpha-2}}
  \end{eqnarray*}
  where $a''\gg a'\gg a$, and the third inequality follows from (2.1) and the sub-mean-value inequality.
  \end{proof}

The key observation for proving Theorem 1.1 and Theorem 1.2 is the following

 \begin{proposition}
 \begin{enumerate}
 \item Let $\{q_k\}_{k=1}^\infty$ be a sequence of points in $\Omega$ such that the balls $B(q_k,c_n \hat{\delta}(q_k))$ are mutually disjoint for  $c_n\ll1$.
 Let $\{c^k\}_{k\ge 1}\subset {\mathbb C}^m$ satisfy
 $$
 \sum_{k=1}^\infty |c^k|^2 e^{-a\,\hat{\delta}(q_k)^{-4}}<\infty,\ \ \ \forall\,a>0.
 $$
  Then there is a map $F\in {\mathcal O}(\Omega,{\mathbb C}^m)$ such that $F(q_k)=c^k$ for any $k$, and
  $$
 |F(z)|\le C_1 e^{C_2 \hat{\delta}(z)^{-6}}, \ \ \ z\in \Omega.
$$
  \item  Let $\{q_k\}_{k=1}^\infty$ be a sequence of points in $\Omega$ such that the balls
  $$
  B\left(q_k,c_n \hat{\delta}(q_k)|\log(\hat{\delta}(q_k)/2)|^{-1}\right)
  $$
   are mutually disjoint for  $c_n\ll1$.
 Let $\alpha>4$ and $\{c^k\}_{k\ge 1}\subset {\mathbb C}^m$ satisfy
 $$
 \sum_{k=1}^\infty |c^k|^2 e^{-a\,\hat{\delta}(q_k)^{-\alpha}}<\infty,\ \ \ \forall\,a>0.
 $$
  Then there is a map $F\in {\mathcal O}(\Omega,{\mathbb C}^m)$ such that $F(q_k)=c^k$ for any $k$, and
  $$
 |F(z)|\le C_1 e^{C_2 \hat{\delta}(z)^{-\alpha-2}}, \ \ \ z\in \Omega.
$$
 \end{enumerate}
 \end{proposition}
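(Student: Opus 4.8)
The plan is to treat both parts uniformly by a weighted \dbar-interpolation scheme, choosing a cutoff scale $r_k$ and a growth exponent $\beta$ adapted to each case: $r_k=c_n\hat\delta(q_k)$ and $\beta=4$ for (1), and $r_k=c_n\hat\delta(q_k)|\log(\hat\delta(q_k)/2)|^{-1}$ and $\beta=\alpha$ for (2). Since the balls $B(q_k,r_k)$ are mutually disjoint, I would first fix cutoffs $\chi_k$ with $\chi_k\equiv 1$ on $B(q_k,r_k/2)$, $\operatorname{supp}\chi_k\subset B(q_k,r_k)$, and $|\dbar\chi_k|\lesssim r_k^{-1}$, and form the smooth (non-holomorphic) interpolant $\tilde F=\sum_k c^k\chi_k$, which already satisfies $\tilde F(q_k)=c^k$ because the supports do not overlap. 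The problem is then to correct $\tilde F$ to a holomorphic map without destroying the interpolation or the growth bound.

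For the correction I would solve $\dbar u=\dbar\tilde F=:g$ by H\"ormander's $L^2$-estimate on the pseudoconvex domain $\Omega$, using the weight
\[
\varphi=a(1+|z|^2)\,\hat\delta^{-\beta}+2n\sum_k\chi_k\log\frac{|z-q_k|}{r_k}.
\]
Here $-\log\hat\delta$ is psh by Oka's theorem, so $\hat\delta^{-\beta}=e^{-\beta\log\hat\delta}$ is psh, and $\log|z-q_k|$ is psh as the logarithm of the modulus of a holomorphic map; the first summand is the growth weight feeding Lemma 2.1, while the second creates a pole $e^{-\varphi}\sim|z-q_k|^{-2n}$ at each $q_k$. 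Granting the estimate, H\"ormander furnishes $u$ with $\int_\Omega|u|^2e^{-\varphi}<\infty$; the $|z-q_k|^{-2n}$ pole forces $u(q_k)=0$, so $F:=\tilde F-u$ is holomorphic with $F(q_k)=c^k$. Finally, since the interpolation term is $\le 0$ we have $\varphi\ge a(1+|z|^2)\hat\delta^{-\beta}$, whence $\int_\Omega|F|^2 e^{-a(1+|z|^2)\hat\delta^{-\beta}}<\infty$, and Lemma 2.1 (with its exponent equal to $\beta$) yields $|F|\le C_1 e^{C_2\hat\delta^{-\beta-2}}$, i.e.\ the $\hat\delta^{-6}$ bound in (1) and the $\hat\delta^{-\alpha-2}$ bound in (2).

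The heart of the matter is the H\"ormander estimate, namely bounding $\int_\Omega|g|^2_{i\partial\dbar\varphi}e^{-\varphi}$. Since $g$ is supported in the cutoff annuli $\operatorname{supp}\dbar\chi_k$ with $|g|\lesssim|c^k|r_k^{-1}$, the key is a lower bound $i\partial\dbar\varphi\gtrsim a\hat\delta(q_k)^{-\beta}\cdot\mathrm{Id}$ there: the term $i\partial\dbar\big(a(1+|z|^2)\hat\delta^{-\beta}\big)$ supplies full-rank positivity of size $a\hat\delta^{-\beta}$ from the $|z|^2$ factor, which must be shown to absorb both the negative contribution $\log\frac{|z-q_k|}{r_k}\,i\partial\dbar\chi_k\sim r_k^{-2}$ coming from truncating the pole and the cross terms between $(1+|z|^2)$ and $\hat\delta^{-\beta}$ (these last controlled by completing the square against the positive gradient contribution of size $a(1+|z|^2)\hat\delta^{-\beta-2}$ that $i\partial\dbar\hat\delta^{-\beta}$ produces). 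With this bound, $|g|^2_{i\partial\dbar\varphi}\lesssim a^{-1}|c^k|^2 r_k^{-2}\hat\delta(q_k)^{\beta}$ on annulus $k$; integrating against $e^{-\varphi}\lesssim e^{-a\hat\delta(q_k)^{-\beta}}$ over a set of volume $\sim r_k^{2n}$ and summing yields a bound $\lesssim a^{-1}\sum_k|c^k|^2(\text{poly in }\hat\delta(q_k))\,e^{-a\hat\delta(q_k)^{-\beta}}$, finite by the hypothesis after lowering $a$ slightly to swallow the polynomial (and, in (2), logarithmic) factors.

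I expect the main obstacle to be exactly this Hessian lower bound in the annuli, together with the bookkeeping that pins down the admissible regimes. The non-smoothness of $\hat\delta$ forces a regularization of $-\log\hat\delta$ before differentiating, and the balance between the truncation error $r_k^{-2}$, the Hessian size $\hat\delta^{-\beta}$, and the density of the $q_k$ permitted by the summability condition is what dictates the two cases: the critical exponent $\beta=4$ is compatible only with the larger separation $r_k\sim\hat\delta(q_k)$ of part (1), whereas shrinking the balls by the factor $|\log(\hat\delta(q_k)/2)|^{-1}$ in part (2) --- which is what will later allow enough points to be packed for the Picard application --- requires the strict margin $\alpha>4$ for the estimate to close.
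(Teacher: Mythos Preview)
Your strategy is exactly the paper's: cutoff interpolant, H\"ormander correction with the weight $\varphi=C\psi+2n\sum_k\chi_k\log(|z-q_k|/r_k)$ where $\psi=(1+|z|^2)\hat\delta^{-\beta}$, the $|z-q_k|^{-2n}$ singularity forcing $u(q_k)=0$, and Lemma~2.1 to pass from the weighted $L^2$ bound to the pointwise bound. So the architecture is correct and matches the paper.

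There is, however, a quantitative slip in your Hessian bookkeeping which is worth fixing because it is precisely what pins down the exponents. You claim $i\partial\bar\partial\psi\gtrsim\hat\delta^{-\beta}\,i\partial\bar\partial|z|^2$ from the $|z|^2$ factor; but the cross terms between $1+|z|^2$ and $\hat\delta^{-\beta}$, once absorbed by your ``completing the square'' against the gradient term of $\hat\delta^{-\beta}$, eat almost all of that positivity and leave only
\[
i\partial\bar\partial\psi\ \ge\ \frac{\hat\delta^{-\beta}}{1+|z|^2}\,i\partial\bar\partial|z|^2\ \ge\ \hat\delta^{-(\beta-2)}\,i\partial\bar\partial|z|^2.
\]
The paper gets this cleanly by writing $\psi=\exp\bigl(-\beta\log\hat\delta+\log(1+|z|^2)\bigr)$ and using $i\partial\bar\partial e^{\phi}\ge e^{\phi}\,i\partial\bar\partial\phi$ for $\phi$ psh, which bypasses the cross-term estimation entirely. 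With the correct lower bound $\hat\delta^{-(\beta-2)}$, the balance against the truncation error $r_k^{-2}$ reads: in (1), $r_k^{-2}\asymp\hat\delta^{-2}$ versus Hessian $\hat\delta^{-2}$, so $\beta=4$ is genuinely critical and one needs the large constant $C$ in front of $\psi$; in (2), $r_k^{-2}\asymp\hat\delta^{-2}|\log\hat\delta|^{2}$ versus Hessian $\hat\delta^{-(\alpha-2)}$, which closes exactly when $\alpha>4$. Had your $\hat\delta^{-\beta}$ bound been correct, any $\beta>2$ (resp.\ $\alpha>2$) would already suffice, contradicting your own final paragraph. Once you replace $\hat\delta^{-\beta}$ by $\hat\delta^{-(\beta-2)}$ throughout (and correspondingly $|g|^2_{i\partial\bar\partial\varphi}\lesssim |c^k|^2 r_k^{-2}\hat\delta(q_k)^{\beta-2}$), the rest of your argument goes through as written.
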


 \begin{proof}
 Put first
 $$
 \psi(z)=(1+|z|^2)\hat{\delta}(z)^{-4}=\exp(-4\log \hat{\delta}(z)+\log(1+|z|^2))
 $$
 for case (1), and
  $
 \psi(z)=(1+|z|^2)\hat{\delta}(z)^{-\alpha}
 $
 for case (2).
 Clearly,
 \begin{equation}
 i\partial\bar{\partial}\psi\ge \psi i\partial\bar{\partial} \log(1+|z|^2)\ge \frac{1+|z|^2}{\hat{\delta}^4}\frac{i\partial\bar{\partial}|z|^2}{(1+|z|^2)^2}\ge \frac{i\partial\bar{\partial}|z|^2}{\hat{\delta}^2}
  \end{equation}
  for case (1) and
    \begin{equation}
 i\partial\bar{\partial}\psi\ge \frac{i\partial\bar{\partial}|z|^2}{\hat{\delta}^{\alpha-2}}
 \end{equation}
 for case (2).
Put $r_k=c_n \hat{\delta}(q_k)$ for case (1), and $r_k=c_n \hat{\delta}(q_k)|\log (\hat{\delta}(q_k)/2)|^{-1}$ for case $(2)$. Let $0\le \chi\le 1$ be a $C^\infty$ function such that $\chi|_{(-\infty,1/2)}=1$ and $\chi|_{(1,\infty)}=0$. By (2.2) and (2.3), there is a sufficiently large constant $C$ such that $i\partial\bar{\partial}\varphi\ge \hat{\delta}^{-2}i\partial\bar{\partial}|z|^2$ for both cases, where
$$
\varphi=C\psi+2n\sum_{k=1}^\infty \chi(|z-q_k|/r_k)\log |z-q_k|/r_k.
$$
Put
$$
v=(v_1,\cdots,v_m):=\sum_{k=1}^\infty c^k \bar{\partial}(\chi (|z-q_k|/r_k)).
$$
Clearly, $v$ is a $C^\infty$ $\bar{\partial}-$closed $m-$vector valued $(0,1)$ form on $\Omega$ and satisfies
\begin{eqnarray*}
\int_\Omega |v|^2_{i\partial\bar{\partial}\varphi} e^{-\varphi} &:=& \int_\Omega \left(|v_1|^2_{i\partial\bar{\partial}\varphi}+\cdots+|v_m|^2_{i\partial\bar{\partial}\varphi}\right) e^{-\varphi}\\
& \le & C' \sum_{k=1}^\infty |c^k|^2\int_{B(q_k,r_k)\backslash B(q_k,r_k/2)}|z-q_k|^{-2n} e^{-C(1+|z|^2)\hat{\delta}^{-4}}\\
& \le & C' \sum_{k=1}^\infty |c^k|^2 e^{-c_n\hat{\delta}(q_k)^{-4}}<\infty
\end{eqnarray*}
for case (1), where $c_n\ll1$, and similarly,
$$
\int_\Omega |v|^2_{i\partial\bar{\partial}\varphi} e^{-\varphi}\le C' \sum_{k=1}^\infty |c^k|^2 e^{-c_n\hat{\delta}(q_k)^{-\alpha}}<\infty
$$
for case (2). Applying H\"ormander's $L^2-$estimates for the $\bar{\partial}-$equation (with values in the trivial $m-$vector bundle) (cf. \cite{Hormander65}, see also \cite{AndreottiVesentini65}, \cite{Demailly82}), we may solve the equation $\bar{\partial}u=v$ on $\Omega$ such that
$$
\int_\Omega |u|^2 e^{-\varphi}\le \int_\Omega |v|^2_{i\partial\bar{\partial}\varphi} e^{-\varphi}<\infty.
$$
Put
$$
F(z)=\sum_{k=1}^\infty c^k \chi (|z-q_k|/r_k)-u(z).
$$
Clearly, we have $F\in {\mathcal O}(\Omega,{\mathbb C}^m)$, $F(q_k)=c^k$ for each $k$, and
$
\int_\Omega |F|^2 e^{-C (1+|z|^2)\hat{\delta}^{-4}}<\infty
$
for case (1), and
$
\int_\Omega |F|^2 e^{-C (1+|z|^2)\hat{\delta}^{-\alpha}}<\infty
$
for case (2). Combining with Lemma 2.1, the conclusion immediately follows.
\end{proof}

Choosing
 $
 \psi(z)=(1+|z|^2)\hat{\delta}(z)^{-2}
 $
 for case (1), and
  $
 \psi(z)=(1+|z|^2)\hat{\delta}(z)^{-\alpha}
 $
 with $\alpha>2$ for case (2) in the proof of the previous proposition, we can prove similarly the following

 \begin{proposition} Let $\Omega$  be a bounded pseudoconvex domain in ${\mathbb C}^n$.
 \begin{enumerate}
 \item Let $\{q_k\}_{k=1}^\infty$ be a sequence of points in $\Omega$ such that the balls $B(q_k,c_n \hat{\delta}(q_k))$ are mutually disjoint for  $c_n\ll1$.
 Let $\{c^k\}_{k\ge 1}\subset {\mathbb C}^m$ satisfy
 $$
 \sum_{k=1}^\infty |c^k|^2 e^{-a\,\hat{\delta}(q_k)^{-2}}<\infty,\ \ \ \forall\,a>0.
 $$
  Then there is a map $F\in {\mathcal O}(\Omega,{\mathbb C}^m)$ such that $F(q_k)=c^k$ for any $k$, and
  $$
 |F(z)|\le C_1 e^{C_2 \hat{\delta}(z)^{-2}}, \ \ \ z\in \Omega.
$$
  \item  Let $\{q_k\}_{k=1}^\infty$ be a sequence of points in $\Omega$ such that the balls
  $$
  B\left(q_k,c_n \hat{\delta}(q_k)|\log(\hat{\delta}(q_k)/2)|^{-1}\right)
  $$
   are mutually disjoint for  $c_n\ll1$.
 Let $\alpha>2$ and $\{c^k\}_{k\ge 1}\subset {\mathbb C}^m$ satisfy
 $$
 \sum_{k=1}^\infty |c^k|^2 e^{-a\,\hat{\delta}(q_k)^{-\alpha}}<\infty,\ \ \ \forall\,a>0.
 $$
  Then there is a map $F\in {\mathcal O}(\Omega,{\mathbb C}^m)$ such that $F(q_k)=c^k$ for any $k$, and
  $$
 |F(z)|\le C_1 e^{C_2 \hat{\delta}(z)^{-\alpha}}, \ \ \ z\in \Omega.
$$
 \end{enumerate}
 \end{proposition}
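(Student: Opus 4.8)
The plan is to run the proof of Proposition 2.3 essentially line by line, changing only the weight: take $\psi(z)=(1+|z|^2)\hat\delta(z)^{-2}$ in case (1) and $\psi(z)=(1+|z|^2)\hat\delta(z)^{-\alpha}$ with $\alpha>2$ in case (2), and invoke the boundedness of $\Omega$ at exactly two places so as to remove the ``$+2$'' that Lemma 2.1 would otherwise cost.

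The first use of boundedness occurs in the Hessian estimate replacing (2.2). Since $-\log\hat\delta$ and $\log(1+|z|^2)$ are psh, $\log\psi$ is psh, so as in the proof of Proposition 2.3 one gets $i\partial\bar\partial\psi\ge\psi\, i\partial\bar\partial\log(1+|z|^2)\ge\frac{\hat\delta^{-2}}{1+|z|^2}\,i\partial\bar\partial|z|^2$ in case (1). In the unbounded situation one bounds $\frac1{1+|z|^2}\ge\hat\delta^2$, which throws away a factor $\hat\delta^2$; here instead I use $1+|z|^2\le M:=1+\sup_\Omega|z|^2<\infty$, obtaining $i\partial\bar\partial\psi\ge M^{-1}\hat\delta^{-2}i\partial\bar\partial|z|^2$, i.e.\ the \emph{full} $\hat\delta^{-2}$ weight is retained. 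The same computation in case (2) gives $i\partial\bar\partial\psi\ge M^{-1}\hat\delta^{-\alpha}i\partial\bar\partial|z|^2$.

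Next I form $\varphi=C\psi+2n\sum_k\chi(|z-q_k|/r_k)\log(|z-q_k|/r_k)$ with $r_k=c_n\hat\delta(q_k)$ in case (1) and $r_k=c_n\hat\delta(q_k)|\log(\hat\delta(q_k)/2)|^{-1}$ in case (2). For $C$ large the Hessian of $C\psi$ dominates those of the cutoff-log terms, which have size $\asymp r_k^{-2}$: in case (1) this is $\asymp\hat\delta(q_k)^{-2}$, matched by $M^{-1}C\hat\delta^{-2}$; in case (2) it is $\asymp\hat\delta(q_k)^{-2}|\log\hat\delta(q_k)|^2$, dominated by $\hat\delta^{-\alpha}$ precisely because $\alpha>2$. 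Thus $i\partial\bar\partial\varphi\ge\hat\delta^{-2}i\partial\bar\partial|z|^2$ (resp.\ $\hat\delta^{-\alpha}$). Setting $v=\sum_k c^k\bar\partial(\chi(|z-q_k|/r_k))$ and repeating the annulus estimate verbatim yields $\int_\Omega|v|^2_{i\partial\bar\partial\varphi}e^{-\varphi}\le C'\sum_k|c^k|^2 e^{-c_n\hat\delta(q_k)^{-2}}<\infty$ (resp.\ with exponent $-\alpha$) by hypothesis; the polynomial and logarithmic prefactors in $\hat\delta(q_k)$ are harmless since $\hat\delta\le 1$ is bounded and the exponential decays. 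Hörmander's theorem solves $\bar\partial u=v$ with $\int_\Omega|u|^2e^{-\varphi}<\infty$, and $F=\sum_k c^k\chi(|z-q_k|/r_k)-u$ is holomorphic, satisfies $F(q_k)=c^k$, and obeys $\int_\Omega|F|^2 e^{-C(1+|z|^2)\hat\delta^{-2}}<\infty$ (resp.\ $-\alpha$).

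The second and decisive use of boundedness is in the growth bound, where I must \emph{not} simply quote Lemma 2.1 (which would give $\hat\delta^{-4}$, resp.\ $\hat\delta^{-\alpha-2}$). Instead I bound $1+|z|^2\le M$ directly, so $\int_\Omega|F|^2 e^{-CM\hat\delta^{-2}}<\infty$, and then run the submean-value argument of Lemma 2.1 with the weight $e^{-CM\hat\delta^{-2}}$: on $B(z,\hat\delta(z)/3)$ one has $\hat\delta\asymp\hat\delta(z)$ by (2.1), so $c\,|F(z)|^2\hat\delta(z)^{2n}e^{-C''\hat\delta(z)^{-2}}\le\int_\Omega|F|^2 e^{-CM\hat\delta^{-2}}$, whence $|F(z)|\le C_1 e^{C_2\hat\delta(z)^{-2}}$ after absorbing $\hat\delta^{-2n}$ into the exponential; the same computation with $\alpha$ handles case (2). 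The only genuinely new point, and the place to be careful, is the bookkeeping isolating these two roles of boundedness; in particular one should verify that in case (2) the inequality $\alpha>2$ is exactly what makes the $\psi$-Hessian beat the cutoff Hessian under the logarithmically shrunken radii, the analogue of the threshold $\alpha>4$ in Proposition 2.3.
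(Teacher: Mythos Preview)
Your approach is correct and coincides with the paper's: the paper merely says to rerun the proof of the preceding (unbounded) proposition with $\psi=(1+|z|^2)\hat\delta^{-2}$ in case (1) and $\psi=(1+|z|^2)\hat\delta^{-\alpha}$, $\alpha>2$, in case (2), and your write-up spells out exactly the two places where boundedness of $\Omega$ saves the ``$+2$'' (the Hessian estimate and the sub-mean-value step replacing Lemma~2.1). The only slip is a numbering typo---you write ``Proposition~2.3'' for the unbounded proposition you are mimicking, which in the paper is Proposition~2.2.
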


We also need the following

\begin{lemma}
Let $\{q_k\}_{k=1}^\infty$ be as in Proposition 2.2.  Then for any $a,\alpha>0$, we have $\sum_{k=1}^\infty e^{-a\hat{\delta}(q_k)^{-\alpha}}<\infty$.
\end{lemma}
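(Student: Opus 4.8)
The plan is to sort the indices by the dyadic size of $\hat\delta(q_k)$ and to bound the number of indices in each group by a Euclidean packing argument based on the disjointness hypothesis. Since $\hat\delta\le(1+|z|^2)^{-1/2}\le 1$ on $\Omega$, for each integer $j\ge0$ I would set
$$
S_j=\bigl\{k:2^{-j-1}<\hat\delta(q_k)\le 2^{-j}\bigr\},
$$
so that $\N=\bigsqcup_{j\ge 0}S_j$ exhausts the index set. For $k\in S_j$ the upper bound $\hat\delta(q_k)\le 2^{-j}$ alone gives $\hat\delta(q_k)^{-\alpha}\ge 2^{j\alpha}$, and hence $e^{-a\hat\delta(q_k)^{-\alpha}}\le e^{-a2^{j\alpha}}$.

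Next I would estimate $\#S_j$. The decisive confinement comes from the very definition of $\hat\delta$: from $\hat\delta(q_k)\le(1+|q_k|^2)^{-1/2}$ we get $|q_k|\le\hat\delta(q_k)^{-1}<2^{j+1}$, so every point with $k\in S_j$ lies in $B(0,2^{j+1})$. By hypothesis the balls $B(q_k,r_k)$ are pairwise disjoint, where $r_k=c_n\hat\delta(q_k)$ in case (1) and $r_k=c_n\hat\delta(q_k)|\log(\hat\delta(q_k)/2)|^{-1}$ in case (2); using the lower bound $\hat\delta(q_k)>2^{-j-1}$ one has $r_k\gtrsim 2^{-j}$ in case (1) and $r_k\gtrsim 2^{-j}j^{-1}$ in case (2) (the worse of the two). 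Since all these disjoint balls sit inside a fixed dilate of $B(0,2^{j+1})$, whose Lebesgue measure in $\C^n=\R^{2n}$ is $\asymp 2^{2nj}$, comparing total volumes gives
$$
\#S_j\ \lesssim\ \frac{2^{2nj}}{(2^{-j}j^{-1})^{2n}}\ =\ 2^{4nj}j^{2n}.
$$

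Finally I would sum over the dyadic classes:
$$
\sum_{k=1}^\infty e^{-a\hat\delta(q_k)^{-\alpha}}\ \le\ \sum_{j\ge 0}\#S_j\,e^{-a2^{j\alpha}}\ \lesssim\ \sum_{j\ge 0}2^{4nj}j^{2n}e^{-a2^{j\alpha}}.
$$
The key point is that $2^{4nj}j^{2n}=e^{O(j)}$ grows only exponentially in $j$ while $e^{-a2^{j\alpha}}$ decays doubly exponentially, so the general term is eventually smaller than, say, $2^{-j}$ and the series converges. The one step I expect to require genuine care is the packing estimate for $\#S_j$: because $\Omega$ is allowed to be unbounded, one cannot compare with the (possibly infinite) volume of $\Omega$ itself, and the entire argument rests on the observation that the factor $(1+|z|^2)^{-1/2}$ built into $\hat\delta$ forces each dyadic class into the bounded region $B(0,2^{j+1})$ of controlled size. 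Once this confinement is secured, the disjointness assumption of Proposition 2.2 --- in either of its two forms, since the difference is only the harmless polynomial factor $j^{2n}$ --- yields the bound, and the double-exponential decay finishes the proof.
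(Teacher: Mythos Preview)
Your argument is correct. The dyadic sorting by $\hat\delta(q_k)$, the confinement $|q_k|<2^{j+1}$ coming from $\hat\delta\le(1+|z|^2)^{-1/2}$, the volume-packing bound on $\#S_j$, and the final summation all go through as you describe; the only cosmetic wrinkle is the factor $j^{-1}$ at $j=0$, which is harmless once you write $(j+1)^{-1}$ or treat the first layer separately.

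The paper's proof reaches the same conclusion by a slicker integral comparison rather than by counting: it chooses auxiliary exponents $\alpha''<\alpha'<\alpha$ and writes
\[
\int_{\mathbb{C}^n} e^{-a(1+|z|^2)^{\alpha''/2}}
\ \ge\ \int_\Omega e^{-a\hat\delta^{-\alpha''}}
\ \ge\ \sum_k \int_{B(q_k,r_k)} e^{-a\hat\delta^{-\alpha''}}
\ \gtrsim\ \sum_k \hat\delta(q_k)^{2n}\,e^{-a\hat\delta(q_k)^{-\alpha'}}
\ \gtrsim\ \sum_k e^{-a\hat\delta(q_k)^{-\alpha}},
\]
using the disjointness of the balls, the comparability (2.1), and absorbing the polynomial $\hat\delta^{2n}$ (and, in case (2), the logarithmic factor) by passing from $\alpha'$ to $\alpha$. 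Both arguments rest on the same two ingredients---the disjoint-ball hypothesis and the built-in decay $(1+|z|^2)^{-1/2}$ in $\hat\delta$---and both trade a polynomial loss against the exponential gain. Your approach is more explicit and yields a quantitative bound on the number of points per dyadic shell; the paper's integral trick is shorter and avoids the dyadic bookkeeping altogether.
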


\begin{proof}
For case (1), we have
\begin{eqnarray*}
\infty> \int_{{\mathbb C}^n} e^{-a (1+|z|^2)^{\alpha''/2}} & \ge & \int_\Omega e^{-a\hat{\delta}^{-\alpha''}}\ge \sum_{k=1}^\infty \int_{B(q_k,r_k)} e^{-a\hat{\delta}^{-\alpha''}}\\
& \ge & C  \sum_{k=1}^\infty \hat{\delta}(q_k)^{2n} e^{-a\hat{\delta}(q_k)^{-\alpha'}}\\
& \ge & C  \sum_{k=1}^\infty  e^{-a\hat{\delta}(q_k)^{-\alpha}}
\end{eqnarray*}
provided $\alpha''\ll\alpha'\ll \alpha$. Case (2) is similar.
\end{proof}

In concluding this section, we recall a classical result from Oka-Cartan's theory.  Let $\Omega$ be a pseudoconvex domain in ${\mathbb C}^n$ and $K$ a compact subset of $\Omega$. Let $\rho$ be a $C^\infty$ strictly psh exhaustion function on $\Omega$ such that $K\subset \{\rho<0\}$.

\begin{proposition} Let $\{q_k\}_{k=1}^\infty$ be a discrete sequence of points in $\{\rho\ge 1\}$. Let $F_0\in {\mathcal O}(\Omega,{\mathbb C}^m)$, and $\{c^k\}_{k=1}^\infty$ be a sequence of points in ${\mathbb C}^m$.   Then for any $\varepsilon>0$ there is a  map $F \in {\mathcal O}(\Omega, {\mathbb C}^m)$ such that $F(q_k)=c^k$ for each $k$ and $
|F-F_0|_K:=\sup_K|F-F_0|<\varepsilon$.
\end{proposition}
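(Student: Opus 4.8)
The plan is to split the statement into an unconstrained interpolation, handled by Oka--Cartan theory, followed by a correction that makes the interpolant small on $K$ without disturbing its values at the $q_k$; the correction is produced by a single $\bar\partial$-problem whose weight is tuned by a large parameter. Since the $m$ components are independent I may assume $m=1$, and writing $F=F_0+h$ and $d^k:=c^k-F_0(q_k)$, it suffices to find $h\in{\mathcal O}(\Omega)$ with $h(q_k)=d^k$ for every $k$ and $\sup_K|h|<\varepsilon$. Because $\Omega$ is pseudoconvex, hence Stein, Cartan's Theorem B applied to the ideal sheaf of the closed discrete set $\{q_k\}$ makes the restriction map ${\mathcal O}(\Omega)\to\prod_k{\mathbb C}$ surjective; thus there is $h_1\in{\mathcal O}(\Omega)$ with $h_1(q_k)=d^k$ for every $k$, with no control on $K$.

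Next I would seek $w\in{\mathcal O}(\Omega)$ with $w(q_k)=0$ for all $k$ and $\sup_K|w-h_1|<\varepsilon$; then $h:=h_1-w$ has the required properties. Since $K\subset\{\rho<0\}$ while $\{q_k\}\subset\{\rho\ge 1\}$, I fix a cutoff $\eta$ with $\eta\equiv 1$ on $\{\rho<1/4\}$ and $\eta\equiv 0$ on $\{\rho>1/2\}$, and set $w_0=\eta h_1$. Then $w_0=h_1$ on a neighbourhood of $K$ while $w_0(q_k)=0$. The form $v:=\bar\partial w_0=h_1\,\bar\partial\eta$ is smooth, $\bar\partial$-closed, and supported in the fixed compact annulus $\{1/4\le\rho\le 1/2\}$, hence away from both $K$ and the $q_k$; in particular the data norm below is automatically finite, with no summability condition on the $d^k$.

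For the solution I would use the weight
\begin{equation*}
\varphi_N=N\mu(\rho)+2n\sum_{k=1}^\infty\chi_k\,\log\bigl(|z-q_k|/r_k\bigr),
\end{equation*}
where $\mu$ is smooth, convex and strictly increasing with $\mu(0)=0$, the $\chi_k$ are cutoffs equal to $1$ near $q_k$ and supported in disjoint balls $B(q_k,r_k)\subset\{\rho>1/2\}$, and $N$ is large. Exactly as in the proof of Proposition 2.2, the logarithmic terms render $e^{-\varphi_N}$ non-integrable at each $q_k$, forcing any $L^2(e^{-\varphi_N})$ solution of $\bar\partial u=v$ to vanish there; and since $\rho$ is strictly psh, $i\partial\bar\partial\varphi_N\ge N\mu'(\rho)\,i\partial\bar\partial\rho$ is positive definite and, for $N$ large, dominates the indefinite contribution of the cutoffs on their transition annuli. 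H\"ormander's $L^2$-estimate then yields $u$ with $\bar\partial u=v$ and $\int_\Omega|u|^2e^{-\varphi_N}\le\int_\Omega|v|^2_{i\partial\bar\partial\varphi_N}e^{-\varphi_N}=:M_N$. Setting $w=w_0-u$ gives $w\in{\mathcal O}(\Omega)$ with $w(q_k)=0$ for all $k$.

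The only genuine obstacle is to convert this single $L^2$ bound into a uniform estimate on $K$, and this is where the normalisation $\mu(0)=0$ is decisive. On ${\rm supp}\,v\subset\{\rho\ge 1/4\}$ one has $\mu(\rho)\ge\mu(1/4)>0$, so $M_N\le C\,e^{-N\mu(1/4)}\to 0$. On $K$ the logarithmic terms vanish and $\rho\le c_K:=\sup_K\rho<0$, so $\mu(\rho)\le\mu(c_K)<0$ and
\begin{equation*}
\int_K|u|^2\le e^{\,N\mu(c_K)}\int_K|u|^2e^{-N\mu(\rho)}\le e^{\,N\mu(c_K)}M_N\longrightarrow 0.
\end{equation*}
Finally $v$ vanishes on $\{\rho<1/4\}\supset K$, so $u$ is holomorphic on a fixed neighbourhood of $K$, and the sub-mean-value inequality (as in Lemma 2.1) promotes this $L^2$ smallness to $\sup_K|u|\to 0$. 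Since $w_0=h_1$ on $K$, this gives $\sup_K|w-h_1|=\sup_K|u|\to 0$; choosing $N$ large enough yields $\sup_K|h|<\varepsilon$, and restoring the $m$ components completes the proof.
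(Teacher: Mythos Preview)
The paper does not prove this proposition; it merely records it as ``a classical result from Oka--Cartan's theory.'' Your $\bar\partial$-based argument, patterned on the proof of Proposition~2.2, is a legitimate way to supply a proof, and the overall strategy---unconstrained interpolation via Cartan~B, then a weighted $\bar\partial$-correction with logarithmic poles at the $q_k$ and a large parameter $N$ to force smallness on $K$---is sound.

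There is, however, one genuine gap. You assert that ``for $N$ large,'' $N\mu'(\rho)\,i\partial\bar\partial\rho$ dominates the indefinite contributions of the cutoffs $\chi_k$ on their transition annuli. This is fine for finitely many cutoffs, but $\{q_k\}$ is infinite and in general accumulates on $\partial\Omega$, which forces $r_k\to 0$; the negative part of $i\partial\bar\partial\bigl(\chi_k\log(|z-q_k|/r_k)\bigr)$ on the $k$-th annulus is then of order $r_k^{-2}$, unbounded in $k$, and no single $N$ can absorb all of them. The standard remedy is to decouple the two roles of the weight: write $\varphi_N=N\mu(\rho)+\psi(\rho)+2n\sum_k\chi_k\log(|z-q_k|/r_k)$, where $\psi$ is a \emph{fixed} convex increasing function with $\psi'$ growing fast enough along the exhaustion---this is possible because each sublevel set $\{\rho\le R\}$ meets only finitely many of the balls---so that already $\psi(\rho)+2n\sum_k\chi_k\log(|z-q_k|/r_k)$ is plurisubharmonic. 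The parameter $N$ is then used solely, as you intend, to drive $\sup_K|u|\to 0$. One small related correction: for the sub-mean-value step you need the $L^2$ bound on a fixed neighbourhood of $K$ contained in $\{\rho<1/4\}$, not on $K$ itself; simply replace $c_K$ by any $c'\in(c_K,0)$ and integrate over $\{\rho\le c'\}$.
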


\section{Casorati-Weierstrass points}

\begin{proof}[Proof of Theorem 1.1.]
 Let ${\mathcal F}$ be a Whitney decomposition of $\Omega$, i.e., ${\mathcal F}$ is a sequence of cubes $\{Q_1,Q_2,\cdots,Q_k,\cdots\}$ such that

 (1) $\Omega=\cup Q_k$.

 (2) The interiors $Q_k^\circ$ of $Q_k$ are mutually disjoint.

 (3) ${\rm diam\,}(Q_k)\le {\rm dist\,}(Q_k,\Omega^c)\le 4\, {\rm diam\,}(Q_k)$.\\
 (see e.g., \cite{SteinSingularIntegral}, p. 167).

 Let $q_k$ denote the center of $Q_k$ and $r_k=c_n\hat{\delta}(q_k)$ where $c_n$ is sufficiently small so that $B(q_k,r_k)\subset Q_k^\circ$.
We take first a sufficiently large integer $N$ such that the following sets
 $$
 E_\mu:=\bigcup_{Q_j\cap \{2^{-N\mu}\le \hat{\delta}\le 2^{-N\mu+1}\}\neq \emptyset} B(q_j,r_j),\ \ \ \mu\ge 1,
 $$
 are mutually disjoint. Let $\{\zeta^k\}_{k=1}^\infty$ be a sequence of complex $m-$vectors which are dense in ${\mathbb C}_\infty^m$. Note that $\sum_{j=1}^\infty e^{-\hat{\delta}(q_j)^{-3}}<\infty$, in view of Lemma 2.4. Thus there is an integer $\mu_1$ such that
 $$
|\zeta^1|^2 \sum_{\mu=\mu_1}^\infty \sum_{q_j\in E_{2\mu-1}} e^{-\hat{\delta}(q_j)^{-3}} <1/2.
$$
We renumerate the sequence of positive even numbers by $\{\nu^1_{\mu}\}_{\mu=1}^\infty$ and take $\mu_2>0$ such that
$$
|\zeta^2|^2 \sum_{\mu=\mu_2}^\infty \sum_{q_j\in E_{\nu^1_{2\mu-1}}} e^{-\hat{\delta}(q_j)^{-3}}<1/{2^2}.
$$
For general $k>2$, we may choose $\mu_k$ by induction such that
$$
|\zeta^k|^2 \sum_{\mu=\mu_k}^\infty \sum_{q_j\in E_{\nu^{k-1}_{2\mu-1}}} e^{-\hat{\delta}(q_j)^{-3}}<1/2^k
$$
where $\{\nu_{\mu}^{k-1}\}_{\mu= 1}^\infty$ is a renumeration of the sequence $\{\nu_{2\mu}^{k-2}\}_{\mu= 1}^\infty$.

  Now define a sequence $\{c^j\}_{j=1}^\infty$ of complex $m-$vectors by $c^j=\zeta^k$ for all $j$ with $q_j\in E_{\nu^{k-1}_{2l-1}}$,  $l\ge \mu_k$, and $c^j=0$ otherwise. Thus we have
  $
  \sum_{j=1}^\infty |c^j|^2e^{-\hat{\delta}(q_j)^{-3}}\le 1,
  $
  so that
  $$
 \sum_{j=1}^\infty |c^j|^2 e^{-a\,\hat{\delta}(q_j)^{-4}}<\infty
 $$
 for any $a>0$. Thus there exists a map $F\in {\mathcal O}(\Omega,{\mathbb C}^m)$ such that $F(q_{j})=c^j$ for each $j$, and
   $$
 |F(z)|\le C_1 e^{C_2 \hat{\delta}(z)^{-6}}, \ \ \ z\in \Omega,
$$
  in view of Proposition 2.2/(1). By the construction of the sequence $\{q_j\}_{j=1}^\infty$, we see that for each $k$ the set of cluster points of $\left\{q_j\in E_{\nu^{k-1}_{2l-1}}: l\ge \mu_k\right\}$ contains  $\partial \Omega\cup \{\infty\}$ ($\partial \Omega$ if $\Omega$ is bounded).  Thus the sequence $\{\zeta^k\}_{k=1}^\infty$ is contained in the cluster set of $F$ at any $p\in \partial \Omega\cup \{\infty\}$ ($\partial \Omega$ if $\Omega$ is bounded), so is the whole of ${\mathbb C}_\infty^m$.

  The second conclusion may be proved similarly by using Proposition 2.3 in place of Proposition 2.2.
\end{proof}

\begin{remark}
Let\/ ${\rm{CW}}(\Omega,{\mathbb C}^m)\subset \mathcal{O}(\Omega,{\mathbb C}^m)$ denote the set of all holomorphic maps whose sets of Casorati-Weierstrass points coincide with $\partial \Omega\cup \{\infty\}$ ($\partial \Omega$ if $\Omega$ is bounded). By a similar argument, invoking Proposition 2.5 in place of Proposition 2.2, one can show that ${\rm{CW}}(\Omega,{\mathbb C}^m)$ lies dense in $\mathcal{O}(\Omega,{\mathbb C}^m)$.
\end{remark}

As a consequence of Theorem 1.1 we immediately get the following

\begin{proposition}
An irreducible complex space admits a holomorphic map to ${\mathbb C}^m$ with dense image if and only if it admits a nonconstant holomorphic function.
\end{proposition}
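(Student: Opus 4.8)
The plan is to prove the two implications separately, with essentially all of the content residing in the ``if'' direction. The ``only if'' direction is immediate: if $F=(f_1,\dots,f_m)\colon M\to{\mathbb C}^m$ is a holomorphic map whose image is dense in ${\mathbb C}^m$ (with $m\ge 1$), then $F$ cannot be constant, since a constant map has a single point as its image, which is never dense. Hence at least one component $f_j\colon M\to{\mathbb C}$ is a nonconstant holomorphic function on $M$, and there is nothing more to do.

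For the ``if'' direction, I would start from an irreducible complex space $M$ carrying a nonconstant holomorphic function $f\colon M\to{\mathbb C}$. Since $M$ is irreducible it is connected, and by the identity theorem on an irreducible complex space a nonconstant holomorphic function is nowhere locally constant. The first step is to invoke the open mapping theorem for holomorphic functions on irreducible complex spaces to conclude that $f$ is an open map; combined with connectedness of $M$, this shows that $\Omega:=f(M)$ is a domain in ${\mathbb C}$. I expect this to be the main obstacle, and the point to be careful about: $M$ may be singular, so the elementary open mapping theorem for complex manifolds does not apply verbatim to $f$ itself. One cannot simply replace $M$ by its regular locus $M_{\mathrm{reg}}$, because $f(M_{\mathrm{reg}})$ could be a proper subset of $f(M)$, and the later composition requires $f(M)$ to equal a genuine domain. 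This difficulty is precisely what the general open mapping theorem for reduced (irreducible) complex spaces resolves, so the crux is to reduce to, or cite, that statement.

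The second step exploits that every domain in ${\mathbb C}$ is pseudoconvex, so Theorem 1.1 applies directly to $\Omega$ (taking $n=1$) and furnishes a map $H\in{\mathcal O}(\Omega,{\mathbb C}^m)$ lying in ${\rm CW}_6(\Omega,{\mathbb C}^m)$. In particular, for any boundary point $p$ the cluster set $C_\Omega(H,p)$ equals ${\mathbb C}_\infty^m$, and since $C_\Omega(H,p)\subset\overline{H(\Omega)}$ this already forces $H(\Omega)$ to be dense in ${\mathbb C}^m$; thus having dense image is an a fortiori weaker conclusion that I get for free. Finally I would compose: because $f(M)=\Omega$ exactly, the map $H\circ f\colon M\to{\mathbb C}^m$ is well defined and holomorphic, and its image is $H(\Omega)$, which is dense in ${\mathbb C}^m$. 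This produces the required dominant map from $M$ to ${\mathbb C}^m$ and completes the proof.
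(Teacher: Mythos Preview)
Your proof is correct and follows essentially the same approach as the paper: use the open mapping theorem on irreducible complex spaces to obtain a domain $\Omega\subset{\mathbb C}$, apply Theorem~1.1 to $\Omega$ (every open subset of ${\mathbb C}$ being pseudoconvex), and compose. Your treatment is simply more detailed---you spell out the ``only if'' direction, justify connectedness of $f(M)$, and explicitly extract density from the Casorati--Weierstrass property---whereas the paper's proof is a terse three-line version of the same argument.
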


\begin{proof} It suffices to verify the if part. Since every nonconstant holomorphic function on an irreducible complex space defines an open map to ${\mathbb C}$, we only need to construct a holomorphic map from an open set in $\mathbb C$ to $\mathbb C^m$ with dense image. But each open set in ${\mathbb C}$ is pseudoconvex, hence Theorem 1.1 applies.
\end{proof}

We also have the following analogous result in real-analytic category:

\begin{proposition} Each noncompact real-analytic manifold admits a real-analytic map to ${\mathbb R}^m$ with dense image.
\end{proposition}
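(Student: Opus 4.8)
The plan is to imitate the proof of Proposition 3.4, replacing the reduction to an open subset of ${\mathbb C}$ by the classical device of embedding a real-analytic manifold into a complex Euclidean space and solving a holomorphic interpolation problem there. Let $M$ be a noncompact real-analytic manifold of real dimension $n$. By Grauert's theorem on the real-analytic embedding of real-analytic manifolds, there is a proper real-analytic embedding $\iota:M\hookrightarrow {\mathbb R}^N$ for some $N$; composing with the inclusion ${\mathbb R}^N\subset {\mathbb C}^N$ we regard $M$ as a closed real-analytic submanifold of ${\mathbb C}^N$. Since $M$ is noncompact and its image is closed in ${\mathbb C}^N$, that image is unbounded, so we may choose a sequence $\{q_k\}_{k=1}^\infty\subset M$ with $|q_k|\to\infty$. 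After discarding finitely many terms we may assume $|q_k|\ge 2$ for all $k$, so that $\{q_k\}$ is a discrete, closed subset of ${\mathbb C}^N$ lying in $\{\rho\ge 1\}$ for $\rho(z)=|z|^2-2$, a strictly plurisubharmonic exhaustion of ${\mathbb C}^N$.

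With the nodes fixed, I would then prescribe values along a countable dense subset $\{c^k\}_{k=1}^\infty$ of ${\mathbb R}^m\subset {\mathbb C}^m$, assigning $c^k$ to $q_k$. Taking $\Omega={\mathbb C}^N$ (which is pseudoconvex), the exhaustion $\rho$ above, $K=\{0\}\subset\{\rho<0\}$ and $F_0=0$, Proposition 2.5 furnishes a holomorphic map $H\in {\mathcal O}({\mathbb C}^N,{\mathbb C}^m)$ with $H(q_k)=c^k$ for every $k$. The restriction $H|_M$ is then a real-analytic map $M\to {\mathbb C}^m$, being the pullback under the real-analytic embedding $\iota$ of a function holomorphic, hence real-analytic, on ${\mathbb C}^N={\mathbb R}^{2N}$. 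Because each $c^k$ is real, its real part $g:=\operatorname{Re}\bigl(H|_M\bigr):M\to {\mathbb R}^m$ is real-analytic and satisfies $g(q_k)=\operatorname{Re} c^k=c^k$. Hence $g(M)\supset\{c^k\}_{k\ge 1}$ is dense in ${\mathbb R}^m$, which proves the proposition.

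The only nonroutine ingredient is Grauert's embedding theorem, which plays here the role that Oka's solution of the Levi problem plays in the holomorphic setting: once $M$ sits as a closed real-analytic submanifold of some ${\mathbb C}^N$, the construction is a direct transcription of the interpolation argument behind Proposition 3.4. I expect the main point to watch to be the \emph{properness} of the embedding, since this is exactly what guarantees that an escaping sequence in $M$ remains discrete and closed in the ambient ${\mathbb C}^N$, so that the interpolation nodes are admissible in Proposition 2.5. (If one prefers an intrinsic formulation, the same argument runs verbatim inside a thin Stein Grauert tube of $M$ combined with Oka--Cartan interpolation on Stein manifolds; embedding into ${\mathbb C}^N$ merely lets us quote Proposition 2.5 directly.)
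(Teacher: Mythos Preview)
Your proof is correct and follows essentially the same strategy as the paper: complexify $M$ via Grauert's theorem, solve a holomorphic interpolation problem on the ambient Stein space with prescribed real values along a discrete sequence in $M$, and restrict. The only difference is cosmetic---the paper works in a Stein Grauert tube $U\subset TM$ (invoking the Stein-manifold version of the interpolation result), whereas you use Grauert's proper real-analytic embedding $M\hookrightarrow{\mathbb R}^N\subset{\mathbb C}^N$, which has the small advantage that Proposition~2.5 applies verbatim to the pseudoconvex domain $\Omega={\mathbb C}^N$; you also make explicit the passage to the real part to land in ${\mathbb R}^m$, a step the paper leaves implicit.
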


\begin{proof} By virtue of  Grauert's theorem \cite{Grauert58}, each real-analytic manifold $M$ admits a Stein neighborhood $U$ in the total space of the tangent bundle $TM$ of $M$, with respect to the natural complexification. Suppose now $M$ is noncompact, then there is a discrete sequence $\{q_k\}_{k=1}^\infty$ of points in $M$. Let $\{\zeta^j\}_{j=1}^\infty$ be a dense sequence of points in ${\mathbb R}^m\subset {\mathbb C}^m$. By virtue of Proposition 2.5, one can construct a holomorphic map $F:U\rightarrow {\mathbb C}^m$ such that $F(q_{k_j})=\zeta^j$ for some subsequence $\{q_{k_j}\}$. The restriction of $F$ to $M$ gives the desired real-analytic map.
\end{proof}
\section{Universally dominated spaces}

By virtue of Proposition 3.1 we immediately get

\begin{proposition} An irreducible complex space is universally dominated if and only if it is dominated by some ${\mathbb C}^m$, and this holds if and only if it is dominated by ${\mathbb C}$.
\end{proposition}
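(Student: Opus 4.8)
The plan is to reduce everything to a single structural fact: domination is transitive, i.e. the composite of two dominant morphisms is again dominant. Granting this, the three conditions --- universal dominability, being dominated by some ${\mathbb C}^m$, and being dominated by ${\mathbb C}$ --- collapse into one another with the help of Proposition 3.1. I write (i), (ii), (iii) for the three conditions in the order they appear in the statement.

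First I would record the transitivity. If $F:X\to Y$ is continuous and $A\subset X$ is dense, then $F(X)=F(\overline A)\subset \overline{F(A)}$, so that $\overline{F(A)}=\overline{F(X)}$. Applying this to a pair of dominant morphisms $G:M''\to M'$ and $H:M'\to M$, with $A=G(M'')$ dense in $M'$ and $F=H$, gives $\overline{(H\circ G)(M'')}=\overline{H(M')}=M$; hence $H\circ G$ is dominant. This reasoning is purely topological and so is valid for morphisms of reduced complex spaces.

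Next come the easy implications. For (i) $\Rightarrow$ (iii): the affine line ${\mathbb C}$ is an irreducible complex space carrying the nonconstant holomorphic function $z$, so the very definition of universal dominability furnishes a dominant morphism ${\mathbb C}\to M$. The implication (iii) $\Rightarrow$ (ii) is trivial, since ${\mathbb C}={\mathbb C}^1$. For (ii) $\Rightarrow$ (iii), suppose $G:{\mathbb C}^m\to M$ is dominant; since ${\mathbb C}$ admits nonconstant holomorphic functions, Proposition 3.1 (equivalently Theorem 1.1 applied to the pseudoconvex domain ${\mathbb C}$) provides a dominant morphism ${\mathbb C}\to {\mathbb C}^m$, and composing with $G$ by the transitivity above yields a dominant ${\mathbb C}\to M$.

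The one implication carrying genuine content is (iii) $\Rightarrow$ (i), and even here the substantive work has already been done in Proposition 3.1. Let $H:{\mathbb C}\to M$ be dominant, and let $M'$ be an arbitrary irreducible complex space admitting a nonconstant holomorphic function. By Proposition 3.1 (with $m=1$) there is a dominant morphism $\Phi:M'\to {\mathbb C}$; composing, $H\circ\Phi:M'\to M$ is dominant by transitivity. As $M'$ was arbitrary, $M$ is universally dominated. I do not expect any serious obstacle: the entire argument is the formal transitivity of domination together with Proposition 3.1, and the only point requiring care is that the density computation in the first step is topological and hence transfers verbatim to the category of reduced complex spaces.
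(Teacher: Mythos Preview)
Your proof is correct and follows precisely the approach the paper intends: the paper records no detailed argument but simply says the proposition follows immediately from Proposition~3.1, and your write-up is a faithful unpacking of that remark, supplying the routine transitivity of domination and the easy implications among (i), (ii), (iii).
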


Basing on this fact, we obtain the following

\begin{proposition} Suppose $M$ is universally dominated. Then we have
\begin{enumerate}
 \item  The Kobayashi pseudodistance $k_M$ of $M$ vanishes identically.

  \item $M$ is ultra-Liouville, i.e., any negative continuous psh function on $M$ is constant.

  \item If $M$ is a projective algebraic manifold, then the irregularity of $M$, i.e.,
  the dimension of the vector space of holomorphic $1-$forms on $M$, is no greater than the dimension of $M$.

  \item If $M$ is a domain in ${\mathbb C}^n$, then for any complex line $L$, $\pi_L(M)$ omits
  at most one point in $L$ where $\pi_L$ is the projection from ${\mathbb C}^n$ to $L$.
  \end{enumerate}
\end{proposition}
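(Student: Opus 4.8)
The common thread is Proposition 4.1: since $M$ is universally dominated it is dominated by $\mathbb{C}$, so I would fix once and for all a holomorphic map $F:\mathbb{C}\to M$ with dense image. Each of the four assertions is then obtained by transporting a Liouville-type rigidity of $\mathbb{C}$ through $F$ and combining it with density and continuity.

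For (1), recall that holomorphic maps are distance-decreasing for the Kobayashi pseudodistance and that $k_{\mathbb{C}}\equiv 0$. Hence $k_M(F(s),F(t))\le k_{\mathbb{C}}(s,t)=0$ for all $s,t\in\mathbb{C}$, so $k_M$ vanishes on $F(\mathbb{C})\times F(\mathbb{C})$; since $F(\mathbb{C})$ is dense and $k_M$ is continuous, I would conclude $k_M\equiv 0$ by approximating arbitrary $p,q\in M$ by $F(s_j),F(t_j)$ and invoking the triangle inequality. For (2), if $\varphi$ is a negative continuous psh function on $M$, then $\varphi\circ F$ is subharmonic and bounded above on $\mathbb{C}$, hence constant by the classical Liouville theorem for subharmonic functions; density of $F(\mathbb{C})$ and continuity of $\varphi$ then force $\varphi$ to be constant on all of $M$. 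For (4), the projection $\pi_L:\mathbb{C}^n\to L\cong\mathbb{C}$ is $\mathbb{C}$-linear, so $h:=\pi_L\circ F:\mathbb{C}\to\mathbb{C}$ is entire; it is nonconstant because its image is dense in the nonempty open set $\pi_L(M)\subset L$, which contains more than one point. By Picard's little theorem $h(\mathbb{C})$ omits at most one value, and since $h(\mathbb{C})\subset\pi_L(M)$ this already shows that $\pi_L(M)$ omits at most one point.

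The serious point is (3). Here I would pass to the Albanese map $a:M\to A:=\mathrm{Alb}(M)$, where $A$ is an abelian variety of dimension equal to the irregularity $q$, and $a(M)$ is a closed subvariety which, by the universal property of the Albanese, is non-degenerate, i.e. not contained in any translate of a proper abelian subvariety. Consider the entire curve $\gamma:=a\circ F:\mathbb{C}\to A$. Since $F(\mathbb{C})$ is dense in $M$ and $a$ is continuous with $a(M)$ Zariski closed, the Zariski closure of $\gamma(\mathbb{C})$ is exactly $a(M)$. The plan is now to invoke the Bloch--Ochiai theorem: the Zariski closure of the image of any entire curve in an abelian variety is a translate of an abelian subvariety. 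Thus $a(M)$ is such a translate; combined with non-degeneracy this forces $a(M)=A$, so $a$ is surjective and $q=\dim A\le\dim M=n$.

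The main obstacle is precisely this last step: everything outside (3) is a soft combination of Proposition 4.1 with one-variable Liouville/Picard phenomena, whereas (3) genuinely needs the deep structure theory of entire curves in abelian varieties (Bloch--Ochiai) in order to turn ``dominated by $\mathbb{C}$'' into surjectivity of the Albanese map. A secondary technical point to verify carefully is that Euclidean density of $\gamma(\mathbb{C})$ in $a(M)$ upgrades to Zariski density, and that the universal property of the Albanese really does yield non-degeneracy of $a(M)$; both are standard but should be recorded explicitly.
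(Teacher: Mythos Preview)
Your proof is correct. Parts (1) and (2) coincide with the paper's argument verbatim. For (3), the paper simply invokes Bloch's theorem in the form ``if the irregularity exceeds $\dim M$ then every entire curve in $M$ is contained in a proper closed subvariety'' and derives an immediate contradiction; your version unpacks this via the Albanese map and the Bloch--Ochiai theorem for abelian varieties, which is exactly how the cited form of Bloch's theorem is proved, so the two arguments are the same in substance. The technical worries you flag (Euclidean density implies Zariski density, and $a(M)$ generates $A$) are indeed routine.

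For (4) you take a genuinely different route from the paper: rather than applying Picard to $\pi_L\circ F$, the paper argues that if $\pi_L(M)$ omitted two points it would be Kobayashi hyperbolic, and then the distance-decreasing property of $\pi_L$ would force $k_M\not\equiv 0$, contradicting (1). Both arguments are one-liners; yours is slightly more self-contained since it does not rely on (1), while the paper's keeps everything inside the Kobayashi framework.
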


\begin{proof} (1) Take a dominant morphism $F:{\mathbb C}\rightarrow M$ (i.e., a holomorphic map with dense image). Given two points $z,w\in {\mathbb C}$, we have
$$
  k_{M}(F(z),F(w))\le k_{\mathbb C}(z,w)=0.
$$
Thus $k_M$ vanishes on a dense set of $M\times M$, so that $k_M$ has to vanish on $M\times M$ by continuity.

    (2) Suppose on the contrary that there exists a negative nonconstant continuous psh function $\psi$ on $M$. Let $F:{\mathbb C}\rightarrow M$ be a dominant morphism. Since $\psi$ is continuous, it is nonconstant on $F({\mathbb C})$. Thus $\psi\circ F$ would be a nonconstant negative subharmonic function on ${\mathbb C}$, which is absurd.

    (3) Suppose on the contrary that the irregularity of $M$ is greater than the dimension of $M$. By Bloch's theorem (cf. \cite{Ochiai77}, \cite{Noguchi77}, \cite{GreenGriffiths}, \cite{Kawamata80}), we known that every holomorphic map $F:{\mathbb C}\rightarrow M$ has its image in a closed proper subvariety of $M$. Thus $M$ is not universally dominated. Contradiction.

    (4) Since $\pi_L$ is an open map, $\pi_L(M)$ is an open set in $L$. If $L\backslash \pi_L(M)$ contains at least
    two points, then $\pi_L(M)$ is Kobayashi hyperbolic, so that $k_M$ does not vanish. By (1), $M$ could
    not be universally dominated. Contradiction.
\end{proof}

\begin{proposition}

\begin{enumerate}
\item  Let $M_1,\,M_2$ be two complex spaces. The product $M_1\times M_2$ is universally dominated if and only if both $M_1,\,M_2$ are universally dominated.

 \item  Let $M_1$ be a universally dominated space. If $M_2$ is dominated by $M_1$, then it is also
 universally dominated.
 \end{enumerate}
\end{proposition}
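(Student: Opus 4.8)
The plan is to reduce everything to the characterization in Proposition 4.1, namely that an irreducible complex space is universally dominated if and only if it is dominated by $\mathbb{C}$. Since both statements concern products and images under dominant morphisms, the key technical point will be composing dominant morphisms correctly and handling the role of irreducibility. First I would prove part (2), which is the shorter of the two and also serves as a lemma for one direction of part (1). Suppose $M_1$ is universally dominated and $M_2$ is dominated by $M_1$, so there is a dominant morphism $G\colon M_1\to M_2$. By Proposition 4.1, $M_1$ is dominated by $\mathbb{C}$, say via a dominant morphism $F\colon \mathbb{C}\to M_1$. Then the composition $G\circ F\colon \mathbb{C}\to M_2$ is holomorphic, and its image $G(F(\mathbb{C}))$ is dense in $M_2$: indeed $F(\mathbb{C})$ is dense in $M_1$, so $G(F(\mathbb{C}))\supseteq G(\overline{F(\mathbb{C})})$ fails in general, but continuity gives $\overline{G(F(\mathbb{C}))}\supseteq G(\overline{F(\mathbb{C})})=G(M_1)$, which is dense in $M_2$; hence $G\circ F$ is dominant. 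Thus $M_2$ is dominated by $\mathbb{C}$, and by Proposition 4.1 again $M_2$ is universally dominated.

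For part (1), I would treat the two directions separately. For the backward direction, assume both $M_1$ and $M_2$ are universally dominated. By Proposition 4.1 there are dominant morphisms $F_i\colon \mathbb{C}\to M_i$ for $i=1,2$, but to dominate the product I need a single source mapping densely into $M_1\times M_2$. The natural candidate is to use $\mathbb{C}^2$: the map $(F_1,F_2)\colon \mathbb{C}^2\to M_1\times M_2$, $(z,w)\mapsto (F_1(z),F_2(w))$, has image $F_1(\mathbb{C})\times F_2(\mathbb{C})$, which is dense in $M_1\times M_2$ since a product of dense sets is dense in the product space. Hence $M_1\times M_2$ is dominated by $\mathbb{C}^2$, and by the equivalence in Proposition 4.1 (domination by some $\mathbb{C}^m$ is equivalent to universal dominability for irreducible spaces) it is universally dominated, provided $M_1\times M_2$ is irreducible. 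This is where I expect the main subtlety: irreducibility of the product must be checked, and one should note that a product of irreducible spaces is irreducible, together with the fact that a universally dominated space is necessarily irreducible in the present framework.

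For the forward direction of part (1), assume $M_1\times M_2$ is universally dominated; I want to conclude each factor is. The clean approach is to observe that the coordinate projection $\pi_i\colon M_1\times M_2\to M_i$ is a surjective, hence dominant, morphism, so $M_i$ is dominated by the universally dominated space $M_1\times M_2$. Part (2), already established, then immediately gives that each $M_i$ is universally dominated. This neatly closes the argument by bootstrapping part (2).

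The step I expect to require the most care is not any single estimate — there are none here — but rather the precise bookkeeping around irreducibility and the hypotheses of Proposition 4.1. That proposition is stated for \emph{irreducible} complex spaces, so whenever I invoke it I must ensure the space in question is irreducible: for $M_1\times M_2$ this follows from irreducibility of the factors, and conversely projections of an irreducible space are irreducible. I would state explicitly that universally dominated spaces are irreducible (a dense holomorphic image from an irreducible source forces irreducibility of the target, since the closure of an irreducible set is irreducible), so that all invocations of Proposition 4.1 are legitimate. The density computations themselves are routine point-set topology once the right source space ($\mathbb{C}$ for part (2), $\mathbb{C}^2$ for the product) is identified.
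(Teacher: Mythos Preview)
Your proposal is correct and follows essentially the same route as the paper: dominant morphisms $F_j\colon\mathbb{C}\to M_j$ combine to $(F_1,F_2)\colon\mathbb{C}^2\to M_1\times M_2$ for the ``if'' part of (1), projections give the ``only if'' part, and composition of dominant morphisms handles (2). The only differences are organizational --- you prove (2) first and reuse it for the forward direction of (1), whereas the paper composes the projection directly with a dominant $\mathbb{C}\to M_1\times M_2$ --- and you add a discussion of irreducibility that the paper leaves implicit; this extra care is reasonable since Proposition~4.1 is stated for irreducible spaces, but it does not change the substance of the argument.
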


\begin{proof} (1) The if part: take two dominant morphisms $F_j:{\mathbb C}\rightarrow M_j$. It follows immediately that $(F_1,F_2):{\mathbb C}^2\rightarrow M_1\times M_2$ is dominant. The only if part: take a dominant morphism ${\mathbb C}\rightarrow M_1\times M_2$. By composing with the projections $\pi_j:M_1\times M_2\rightarrow M_j$, $j=1,2$, respectively, we get dominant morphisms ${\mathbb C}\rightarrow M_j$.

 (2) Take first a dominant morphism from ${\mathbb C}$ to $M_1$. By composing with a dominant morphism $M_1\rightarrow M_2$, we get a dominant morphism ${\mathbb C}\rightarrow M_2$.
\end{proof}

Now we give some examples of universally dominated spaces.

\begin{example}[4.1] The Riemann surfaces which are universally dominated are ${\mathbb C}$, ${\mathbb C}^\ast$, ${\mathbb P}^1$ and all tori. They are precisely all the Riemann surfaces which are Oka; the others are hyperbolic (cf. \cite{ForstnericBook}, Chapter 5).  Furthermore, any (singular) complex curve dominated by ${\mathbb C}$ or ${\mathbb P}^1$ is universally dominated, e.g., the rational normal curve in ${\mathbb P}^n$, which is defined to be the image of the holomorphic map ${\mathbb P}^1\rightarrow {\mathbb P}^n$ given by
$$
(z_0:z_1)\mapsto \left(z_0^n:z_0^{n-1}z_1:\cdots:z_0z_1^{n-1}:z_1^n\right).
$$
\end{example}

A complex manifold $Y$ is called an Oka manifold if every holomorphic map $F:K\rightarrow Y$ from a compact convex set $K\subset {\mathbb C}^n$ can be approximated uniformly on $K$ by entire maps ${\mathbb C}^n\rightarrow Y$.
For a list of examples of Oka manifolds, we refer to \cite{ForstnericBook} or \cite{ForstnericLarusson}. In particular, every complex Lie group is Oka.

\begin{example}[4.2] Elliptic $K3-$surfaces and Kummer surfaces (cf. \cite{BuzzardLu}).
\end{example}

\begin{example}[4.3] Fatou-Bieberbach domains, and unbounded domains ${\mathbb C}^n\backslash K$, where $K$ is a compact polynomially convex set in ${\mathbb C}^n$, $n\ge 2$ (cf. Corollary 2 in \cite{ForstnericRitter}, see also \cite{RosayRudin}).
\end{example}

\begin{remark} We claim that there are domains $\Omega_1\subset \Omega_2\subset {\mathbb C}^n$ with $n\ge 2$ such that
$\Omega_1$ is universally dominated while $\Omega_2$ is not. To see this, simply take $\Omega_1:={\mathbb C}^n\backslash {\mathbb B}^n$. Then it is universally dominated by
virtue of the previous example. Fix a point $p$ in the unit sphere and put $\Omega_2=\Omega_1\bigcup B(p,1)$. We claim
that $\Omega_2$ is not universally dominated. To see this, simply take a continuous psh peak function of $\Omega_2$ at the
 strongly pseudoconvex point $0$, so that $\Omega_2$ is not universally dominated by virtue of Proposition 4.2/(2).
\end{remark}

\begin{example}[4.4] Toric spaces, i.e., complex spaces with an open dense subset biholomorphic to $({\mathbb C}^\ast)^n$. One
warning: this definition is more general than the classical definition of toric varieties in algebraic geometry). Indeed, toric varieties are also Oka (cf. Theorem 2.17 in \cite{Forstneric13}). Classical examples of compact toric manifolds are the projective space ${\mathbb P}^n$, the Osgood space ${\mathbb C}^n_\infty$, and the Hirzebruch surfaces. Moreover, the Grassmannians are also toric manifolds. To see this, simply note that every Grassmannian ${\mathbb G}(k,n)$ contains ${\mathbb C}^{k(n-k)}$ as a Zariski open subset (see e.g., \cite{ChirkaAnalytic}, p. 320--321). Indeed,  Grassmanians are also complex homogeneous and hence Oka in view of the classical results of Grauert.
\end{example}

\begin{example}[4.5]  An important class of noncompact toric manifolds may be constructed as follow. Let $S$ be a closed subset in
${\mathbb C}^n$ so that there exists an automorphism $F$ of ${\mathbb C}^n$ such that $F(S)$ is contained in
complex coordinate hyperplanes. Clearly,  ${\mathbb C}^n\backslash S$ is biholomorphic to ${\mathbb C}^n\backslash F(S)$
 which contains $({\mathbb C}^\ast)^n$ as a dense subset, hence is a toric manifold. This applies in particular, to the
  complement of a tame set in the sense of Rosay-Rudin \cite{RosayRudin} in ${\mathbb C}^n$, $n\ge 2$.
  \end{example}

\begin{example}[4.6] Quotients of universally dominated manifolds by discrete groups of automorphisms acting properly discontinuously. This
includes all complex tori, the Iwasawa manifold and the Hopf manifolds.
\end{example}

\begin{example}[4.7] Calabi-Eckmann manifolds. In fact, we know from \cite{CalabiEckmann} that every Calabi-Eckmann manifold $M_{m,n}$ is a complex manifold homeomorphic to the Cartesian product ${\mathbb S}^{2m+1}\times {\mathbb S}^{2n+1}$ of two odd-dimensional spheres, and one can choose a cover of coordinate domains $V_{\alpha\beta}\,(\alpha=0,\cdots,m; \beta=0,1,\cdots,n)$ defined by
$$
V_{\alpha\beta}=\left\{(z,z')\in {\mathbb S}^{2m+1}\times{\mathbb S}^{2n+1}\subset {\mathbb C}^{m+1}\times {\mathbb C}^{n+1}: z_\alpha z_\beta'\neq 0\right\},
$$
which is homeomorphic to ${\mathbb C}^{m+n}\times {\mathbb T}^1$ where ${\mathbb T}^1$ is a $1-$dimensional complex torus, so that the complex structure of ${\mathbb C}^{m+n}\times {\mathbb T}^1$ gives local coordinates of $V_{\alpha\beta}$. It follows immediately that $M_{m,n}$ is universally dominated.
\end{example}

\begin{example}[4.8] Let $M$ be a universally dominated manifold and $f$ a holomorphic function on $M$. Then the complement of the graph of $f$ in $M\times {\mathbb C}$ is universally dominated. Indeed,
it is biholomorphic to $M\times{\mathbb C}^\ast$ via the automorphism $({\rm id}_M,{\rm id}_{\mathbb C}-f)$ of $M\times {\mathbb C}$. It is unknown whether the condition can be weaken to that $f$ is only meromorphic. The special case when $M={\mathbb C}$ has been verified by Buzzard-Lu (cf. \cite{BuzzardLu}, Theorem 5.2). There are new results in this direction (cf.  Section 4 in \cite{Hanysz2012}).
\end{example}

\begin{example}[4.9]  The complement $M$ of $k\le n+1$ distinct hyperplanes in general position in ${\mathbb P}^n$ is universally dominated. To see this, simply take a complex chart $({\mathbb C^n;z})$ in ${\mathbb P}^n$ so that the restriction of $M$ to this complex chart is $({\mathbb C}^\ast)^{k-1}\times {\mathbb C}^{n-k+1}$, which is dense in ${\mathbb P}^n$.  On the other side,  the image of any holomorphic map from ${\mathbb C}$ to the complement of $n+2$ distinct hyperplanes in ${\mathbb P}^n$ lies in a hyperplane (cf. \cite{Green72}),
thus the latter is not universally dominated.
\end{example}

In general, it is very difficult to determine whether the complement of a given divisor $S$ in  a non-hyperbolic
manifold is universally dominated (e.g., Kobayashi's conjecture). Below  we present a useful method of constructing
divisors with universally dominated complements in ${\mathbb C}^n$ or ${\mathbb P}^n$. Put
$$
{\mathcal F}_n:=\left\{f\in {\mathcal O}({\mathbb C}^n): {\mathbb C}^n\backslash Z_f \ {\rm is\ universally\ dominated\,}\right\}
$$
where $Z_f=\{f=0\}$.
Clearly, $z_1^{\alpha_1}\cdots z_n^{\alpha_n}\in {\mathcal F}_n$ for any nonnegative integers $\alpha_1,\cdots,\alpha_n$. Put
$$
S=\left\{(z_1,\cdots,z_n)\in {\mathbb C}^n: f(z_1,\cdots,z_{n-1})z_n+g(z_1,\cdots,z_{n-1})=0\right\}
$$
where $f\in {\mathcal F}_{n-1}$ and $g\in {\mathcal O}({\mathbb C}^{n-1})$. We claim that ${\mathbb C}^n\backslash S$ is universally dominated. To see this, simply view  $\left(\left({\mathbb C}^{n-1}\backslash Z_f\right)\times \mathbb C\right)\cap S$ as the graph of the holomorphic function
$$
h:=-g/f
$$
on ${\mathbb C}^{n-1}\backslash Z_f$ so that its complement in $\left({\mathbb C}^{n-1}\backslash Z_f\right)\times {\mathbb C}$ is universally dominated by Example 4.8, so is ${\mathbb C}^n\backslash S$.

It follows immediately that ${\mathbb P}^{n}\backslash \hat{S}$ is universally dominated if $\hat{S}$ is defined by
$$
 z_1^{\alpha_1}\cdots z_{n}^{\alpha_{n}}z_0+\sum_{\gamma_1+\cdots+\gamma_n=\alpha_1+\cdots+\alpha_n+1} c_{\gamma_1\cdots\gamma_n}z_1^{\gamma_1}\cdots z_n^{\gamma_n}=0
$$
where $\alpha_1,\cdots,\alpha_n$ are nonnegative integers. As a consequence, we get

\begin{proposition}
\begin{enumerate}
\item  The complement of any smooth quadric hypersurface in ${\mathbb P}^n$ is universally dominated.

\item  The complement of the universal hypersurface of degree $d$ in ${\mathbb P}^m\times{\mathbb P}^n$ with
 $m=\left(
                                                                                                                     \begin{array}{c}
                                                                                                                       n+d \\
                                                                                                                       n \\
                                                                                                                     \end{array}
                                                                                                                   \right)
                                                                                                                   -1
 $
 is universally dominated.
 \end{enumerate}
 \end{proposition}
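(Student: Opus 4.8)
The plan is to deduce both assertions from material already assembled: the class of hypersurfaces $\hat S\subset\mathbb{P}^n$ whose complements were just shown to be universally dominated, the graph principle of Example 4.8, the product property (Proposition 4.3/(1)), and the fact that a dominant morphism into a dense subset is dominant into the whole space (equivalently, bimeromorphic invariance of universal dominability).

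For (1), I would note that a smooth quadric already occurs in the family $\hat S$. Choosing $\alpha_1=1$, $\alpha_2=\cdots=\alpha_n=0$, so that $|\alpha|+1=2$, and taking the homogeneous quadratic tail to be $z_2^2+\cdots+z_n^2$, one gets
$$ \hat S:\quad z_0z_1+z_2^2+\cdots+z_n^2=0, $$
which belongs to the family, so $\mathbb{P}^n\setminus\hat S$ is universally dominated. A one-line rank computation shows the underlying quadratic form is nondegenerate, hence $\hat S$ is smooth. Over $\mathbb{C}$ every nondegenerate quadratic form in $n+1$ variables is $GL(n+1,\mathbb{C})$-equivalent to the standard one, so any two smooth quadrics in $\mathbb{P}^n$ differ by an element of $PGL(n+1,\mathbb{C})$, i.e.\ by a biholomorphism of $\mathbb{P}^n$ carrying one complement onto the other. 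Since universal dominability is a biholomorphic (indeed bimeromorphic) invariant, the complement of an arbitrary smooth quadric is universally dominated.

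For (2), write the universal hypersurface as
$$ \mathcal X=\Big\{([a],[z])\in\mathbb{P}^m\times\mathbb{P}^n:\ \textstyle\sum_{|I|=d}a_Iz^I=0\Big\}, $$
where $I=(i_0,\dots,i_n)$ runs over the $m+1=\binom{n+d}{n}$ monomials of degree $d$ and $a=(a_I)$ are homogeneous coordinates on $\mathbb{P}^m$. The decisive point is that this incidence equation is homogeneous of degree one in $a$, and that the coefficient of the parameter $a_{I_0}$ attached to $z_0^d$ (with $I_0=(d,0,\dots,0)$) is the monomial $z_0^d$. I would pass to the affine chart $U=\{a_{I_1}\neq0\}\times\{z_0\neq0\}$, where $I_1=(d-1,1,0,\dots,0)$, with coordinates $\tilde a_I=a_I/a_{I_1}\ (I\neq I_1)$ and $w_i=z_i/z_0$, so that $U\cong\mathbb{C}^{m+n}$ and the equation becomes $w_1+\sum_{I\neq I_0,I_1}\tilde a_Iw^{I'}+\tilde a_{I_0}=0$, using $z^{I}|_{z_0=1}=w^{I'}$, $w^{I_0'}=1$ and $w^{I_1'}=w_1$. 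Because the coefficient of $\tilde a_{I_0}$ is the unit $1$, this solves for $\tilde a_{I_0}$ as a polynomial
$$ \tilde a_{I_0}=-\,w_1-\!\!\sum_{I\neq I_0,I_1}\!\!\tilde a_Iw^{I'}=:f(w,\tilde a'),\qquad \tilde a'=(\tilde a_I)_{I\neq I_0,I_1}, $$
holomorphic on $M:=\mathbb{C}^{n+m-1}$. Hence $U\setminus\mathcal X$ is exactly the complement of the graph of $f$ inside $M\times\mathbb{C}$. As $M=\mathbb{C}^{n+m-1}$ is universally dominated (a finite product of copies of $\mathbb{C}$, via the product property), Example 4.8 gives that $U\setminus\mathcal X$ is universally dominated, so it carries a dominant morphism from $\mathbb{C}$. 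Finally $U\setminus\mathcal X$ is dense in the irreducible variety $(\mathbb{P}^m\times\mathbb{P}^n)\setminus\mathcal X$, whence that morphism still has dense image there; thus $(\mathbb{P}^m\times\mathbb{P}^n)\setminus\mathcal X$ is universally dominated.

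I expect the only genuine subtlety to lie in (2): recognizing that the linearity of the incidence equation in the parameters $a$, together with the unit coefficient of the $z_0^d$-parameter, realizes the affine slice $U\setminus\mathcal X$ as an honest graph complement over Euclidean space so that Example 4.8 applies, and then justifying the reduction from the product of projective spaces to this slice by density (bimeromorphic invariance). Part (1) should be routine once one observes that the prescribed family $\hat S$ already contains a smooth quadric.
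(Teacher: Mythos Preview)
Your argument is correct and follows essentially the same route as the paper. For (1) you are slightly more direct than the paper---you exhibit a smooth quadric $z_0z_1+z_2^2+\cdots+z_n^2=0$ already in the $\hat S$ family and invoke projective equivalence, whereas the paper starts from the diagonal form $z_0^2+\cdots+z_n^2=0$ and performs two explicit linear changes of coordinates to reach the $\hat S$ form; for (2) the paper merely says ``it is easy to see,'' and your graph-complement argument in the affine chart (via Example~4.8 and density) is exactly the intended justification.
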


\begin{proof} (1)  A quadric hypersurface in ${\mathbb P}^n$ is defined by a homogeneous polynomial of degree 2.
After a change of coordinates, we may assume that hypersurface is
    $$
Q_k:=\left\{(z_0:z_1:\cdots:z_n): z_0^2+z_1^2+\cdots+z_k^2=0\,\right\}
 $$
 where $k$ is the rank of $Q_k$. Clearly, $Q_k$ is smooth if and only if $k=n$. We claim that ${\mathbb P}^n\backslash Q_n$ is universally dominated.  To see this, take first a change of coordinates as follows
  $$
 z_0=\zeta_0+i\zeta_1,\,z_1=\zeta_1+i\zeta_2,\ \cdots,\ z_n=\zeta_n+i\zeta_0,
 $$
 so that the equation becomes
 $$
\zeta_0\zeta_1+\zeta_1\zeta_2+\cdots+\zeta_n\zeta_0=0.
$$
Next choose a new coordinate system:
$$
t_0=\zeta_0,\ \cdots,\ t_{n-1}=\zeta_{n-1},\ t_n=\zeta_1+\zeta_n,
$$
so that
$$
Q_n=\left\{(t_0:t_1:\cdots:t_n):t_0 t_n +{\rm polynomial\ of\ }(t_1,\cdots,t_n)=0\right\}.
$$
Thus ${\mathbb P}^n\backslash Q_n$ is universally dominated.

(2) Recall that the universal hypersurface ${\mathcal S}_{n,d}$ of degree $d$ in ${\mathbb P}^n$ is defined by
$$
\sum_{\alpha_0+\cdots+\alpha_n=d} t_{\alpha_0\cdots\alpha_n} z_0^{\alpha_0}\cdots z_n^{\alpha_n}=0
$$
where $(z_0:z_1:\cdots:z_n)$ is the homogeneous coordinate of ${\mathbb P}^n$ and
$(t_{\alpha_0\cdots\alpha_n})_{\alpha_0+\cdots+\alpha_n=d}$ is the homogeneous coordinate of ${\mathbb P}^m$.
 It is easy to see that $({\mathbb P}^m\times {\mathbb P}^n)\backslash {\mathcal S}_{n,d}$ is universally dominated.
\end{proof}

The central property of universally dominated spaces is the following

\begin{theorem} {\it Let $M_1$ be a universally dominated complex space and $M_2$ a complex space.
If there exists a surjective meromorphic map\/ $\Phi:M_1\rightarrow M_2$, then $M_2$ is also universally dominated.}
\end{theorem}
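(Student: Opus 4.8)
The plan is to reduce the problem about the surjective meromorphic map $\Phi$ to the already-established characterization in Proposition 4.1, namely that an irreducible complex space is universally dominated if and only if it is dominated by $\mathbb{C}$, together with the transitivity statement in Proposition 4.2/(2), which asserts that any space dominated by a universally dominated space is itself universally dominated. Since Proposition 4.2/(2) is phrased in terms of \emph{dominant morphisms} (holomorphic maps with dense image), the essential task is to manufacture a genuine holomorphic dominant morphism into $M_2$ out of the merely meromorphic surjection $\Phi$.

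First I would address the fact that universal dominability is defined for irreducible spaces, whereas $M_2$ is a priori only assumed to be a complex space; here $M_1$ universally dominated forces $M_1$ to carry a nonconstant holomorphic function and, via a dominant morphism from $\mathbb{C}$, to be irreducible, and the surjectivity of $\Phi$ should let me pass this irreducibility to $M_2$ (the image of an irreducible space under a surjective meromorphic map is irreducible). With that reduction in place, the core step is to handle the indeterminacy locus of $\Phi$. A meromorphic map $\Phi:M_1\dashrightarrow M_2$ is holomorphic off a nowhere-dense analytic set $A\subset M_1$, and on $M_1\setminus A$ it is an honest holomorphic map. By Proposition 4.1 there is a dominant morphism $G:\mathbb{C}\rightarrow M_1$, i.e.\ a holomorphic map with dense image. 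The key observation is that $G(\mathbb{C})$ is dense and $A$ is nowhere dense, so one can arrange $G(\mathbb{C})$ to meet the holomorphy domain $M_1\setminus A$; more carefully, I would show the composite $\Phi\circ G$ is meromorphic on $\mathbb{C}$ and, because $\mathbb{C}$ is one-dimensional, its indeterminacy set is discrete, hence removable after passing to the open set $\mathbb{C}\setminus(\text{discrete set})$, which is again pseudoconvex and thus universally dominated by Theorem 1.1 / Proposition 3.1.

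The main obstacle is precisely this indeterminacy-and-density bookkeeping: I must guarantee that composing with $\Phi$ does not collapse the dense image, i.e.\ that $\Phi\bigl(\overline{G(\mathbb{C})}\cap(M_1\setminus A)\bigr)$ remains dense in $M_2$. Here surjectivity of $\Phi$ is used crucially. Because $\Phi$ restricted to $M_1\setminus A$ is holomorphic with image dense in $M_2$ (its full image being $M_2$ minus at most a nowhere-dense set), and because $G$ has dense image meeting $M_1\setminus A$, the closure of $\Phi\circ G$ of the relevant open subset of $\mathbb{C}$ should recover a dense subset of $M_2$. Concretely, I would let $U:=G^{-1}(M_1\setminus A)\subset\mathbb{C}$, an open set which is nonempty (indeed dense) since $G$ has dense image and $A$ is analytic of positive codimension; then $\Phi\circ G|_U:U\rightarrow M_2$ is a genuine holomorphic map. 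I would argue its image is dense by combining the density of $G(U)$ in $M_1\setminus A$ with the fact that $\Phi$ is open on a dense subset (generic smoothness of the surjection) so that dense sets map to dense sets.

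Finally, with the dense-image holomorphic map $\Phi\circ G|_U:U\rightarrow M_2$ in hand, $U$ being an open subset of $\mathbb{C}$ is pseudoconvex and hence universally dominated by Proposition 3.1, so Proposition 4.2/(2) applies directly to conclude that $M_2$ is universally dominated. I expect the density-preservation argument under the meromorphic map—ensuring $\Phi\circ G$ still surjects onto a dense set after excising the indeterminacy locus—to be the delicate point, while the reduction to $\mathbb{C}$ and the appeal to transitivity are formal once that is secured.
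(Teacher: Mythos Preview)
Your proposal is correct and follows the same overall strategy as the paper---compose a dominant morphism $F:\mathbb{C}\to M_1$ with $\Phi$---but the execution differs at one point. The paper observes that $\Phi\circ F$ is a meromorphic map from $\mathbb{C}$ to $M_2$ and then invokes Remmert's theorem that the indeterminacy locus of a meromorphic map has codimension $\ge 2$; since $\dim\mathbb{C}=1$, this locus is \emph{empty}, so $\Phi\circ F$ is already holomorphic on all of $\mathbb{C}$. You instead restrict to the open set $U=G^{-1}(M_1\setminus A)\subset\mathbb{C}$ and use that $U$, being a planar domain, is itself universally dominated. Your route is more elementary in that it avoids Remmert's codimension result, at the cost of an extra layer (re-invoking Proposition~3.1 and Proposition~4.2/(2) for $U$ rather than $\mathbb{C}$); the paper's route is shorter and yields a dominant morphism directly from $\mathbb{C}$. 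Note that your remark ``its indeterminacy set is discrete'' understates the truth: by Remmert it is actually empty, which is precisely what the paper exploits.

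Both arguments hinge on the same density fact---that $\Phi$ restricted to its holomorphy domain $M_1\setminus A$ has dense image in $M_2$---and you rightly flag this as the delicate step. Your proposed justification via ``generic smoothness'' and openness is heavier than necessary: the paper simply notes that there is an open dense $U'\subset M_1$ on which $\Phi$ is a dominant morphism (this follows from surjectivity and continuity of the projection from the graph), and then density of $F(\mathbb{C})$ in $M_1$ plus continuity of $\Phi|_{U'}$ immediately gives density of $\Phi\circ F(\mathbb{C})$ in $M_2$. Your digression on irreducibility of $M_2$ is not needed for the argument and does not appear in the paper.
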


\begin{proof} The argument is inspired by Kobayashi \cite{KobayashiBook98}, Lemma 3.5.29. Take first
 a dominant morphism $F:{\mathbb C}\rightarrow M_1$. Let $S$ be the singular set of $\Phi$. Since $\overline{F({\mathbb C})}=M_1$ and $\Phi$ is surjective, $F^{-1}(S)\neq {\mathbb C}$, i.e., $F^{-1}(S)$ is nowhere dense in ${\mathbb C}$.
  Thus $\Phi\circ F:{\mathbb C}\rightarrow M_2$ is a meromorphic map (in the sense of Remmert) with singularity set $F^{-1}(S)$.
   It is known from Remmert \cite{Remmert57} (see also \cite{KobayashiBook98}, p. 38) that the singular set of a meromorphic map is a closed complex subspace of codimension $\ge 2$.  Since  ${\rm dim\,}{\mathbb C}=1$, we conclude that the singular set of $\Phi\circ F$ has to be empty,
    i.e., $\Phi\circ F$ is actually holomorphic. Since there is an open dense subset $U\subset M_1$ so that
    $\Phi:U\rightarrow M_2$ is a dominant morphism, we see that $\Phi\circ F:{\mathbb C}\rightarrow M_2$ is
    also a dominant morphism.
\end{proof}

An immediate consequence is

\begin{corollary}
 Universal dominability is a bimeromorphic invariant.
\end{corollary}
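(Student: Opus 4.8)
The plan is to obtain this directly from Theorem 4.5, the central property of universally dominated spaces just established. Recall that $M_1$ and $M_2$ are bimeromorphically equivalent precisely when there is a bimeromorphic map $\Phi:M_1\rightarrow M_2$, i.e.\ a meromorphic map admitting a meromorphic inverse $\Phi^{-1}:M_2\rightarrow M_1$. The first step is to check that both $\Phi$ and $\Phi^{-1}$ meet the hypothesis of Theorem 4.5, namely that each is a \emph{surjective} meromorphic map. This follows by unwinding the Remmert definition: a bimeromorphic $\Phi$ restricts to a biholomorphism between open dense subsets, so its graph $\Gamma_\Phi\subset M_1\times M_2$ is irreducible with both projections proper and bimeromorphic, and in particular each projection is surjective. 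Hence $\Phi:M_1\rightarrow M_2$ and $\Phi^{-1}:M_2\rightarrow M_1$ are surjective meromorphic maps.

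With this observation the corollary becomes a two-fold application of Theorem 4.5. If $M_1$ is universally dominated, applying the theorem to $\Phi$ shows that $M_2$ is universally dominated; symmetrically, if $M_2$ is universally dominated, applying the theorem to $\Phi^{-1}$ shows that $M_1$ is universally dominated. Therefore universal dominability is carried along the equivalence in both directions, which is exactly the assertion that it is a bimeromorphic invariant.

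The only point requiring any care --- and hence the main, though modest, obstacle --- is the surjectivity claim in the first step: one must confirm that a bimeromorphic map satisfies the surjectivity demanded by Theorem 4.5 rather than mere dominance, but this is immediate from the structure of the graph. As a sample application, this invariance recovers the claim from the introduction that every Kummer manifold is universally dominated: such a manifold is bimeromorphic to a finite quotient $T/G$ of a complex torus $T$, and since $T$ is universally dominated (Example 4.6) and the quotient morphism $T\rightarrow T/G$ is a surjective meromorphic map, Theorem 4.5 gives that $T/G$ is universally dominated, whence so is the Kummer manifold by the corollary itself.
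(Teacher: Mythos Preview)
Your proposal is correct and follows exactly the route the paper intends: the corollary is stated there as ``an immediate consequence'' of Theorem~4.5 with no further proof, and your two-fold application of that theorem to $\Phi$ and $\Phi^{-1}$ is precisely the intended argument. The extra care you take to justify surjectivity of a bimeromorphic map via its graph, and the Kummer illustration (which the paper also records as Example~4.10), are welcome elaborations but do not depart from the paper's approach.
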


\begin{example}[4.10] Unirational varieties and Kummer manifolds are universally dominated.
 Indeed, if $M$ is unirational, then there exists a surjective rational map
 $F:{\mathbb P}^n \rightarrow M$. Let $\pi:\widetilde{M}\rightarrow M$ be
 a desingularization of $M$. Clearly, $\pi^{-1}\circ F:{\mathbb P}^n\rightarrow \widetilde{M}$
 is a surjective rational map. Thanks to Theorem 4.5, $\widetilde{M}$ is universally dominated,
 so is $M$ (see also \cite{Winkelmann05} for a different approach). For a Kummer manifold $M$,
 there exist an abelian variety $A$ and a finite group $G$ of holomorphic automorphisms of $A$ such that
 $M$ is bimeromorphically equivalent to the quotient variety $A/G$, thus it has to be universally dominated, in view of Corollary 4.6. Note that Kummer surfaces are also strongly dominable by ${\mathbb C}^2$ (cf. Corollary 3 and \S 3 in \cite{ForstnericLarusson}).
\end{example}

It is interesting to ask whether universal dominability is stable under small deformations of complex structures. The answer is no. It is known form Forstneri$\check{\rm c}$-L\'arusson \cite{ForstnericLarusson} that there exists a complex analytic
family of\/ {\it compact}\/ complex surfaces such that the central fibre is an Inoue-Hirzebruch surface
 which is not universally dominated since its universal covering possesses a nonconstant negative psh function,
 continuous outside of a curve, whereas all other fibres are minimal Enoki surfaces which are Oka, so they are
  universally dominated. Such a family is actually due to Dloussky. We thank Finnur L\'arusson for pointing out this fact.

 We have learnt from Proposition 4.2/(1) that universal dominability implies the Kobayashi pseudodistance $k_M\equiv 0$. The converse fails, however:

\begin{proposition} {\it There is a complex space $M$ which is not universally dominated but $k_M\equiv 0$.}
\end{proposition}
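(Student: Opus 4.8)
The plan is to produce the example as a compact complex surface, and the first thing to record is that the two required properties are in genuine tension. By Proposition 4.1 the failure of universal dominability is equivalent to $M$ not being dominated by $\mathbb{C}$, so I must find a space with $k_M\equiv 0$ that nonetheless admits \emph{no} entire curve with dense image. These pull against each other: for a quotient $\widetilde M/\Gamma$ the vanishing of $k$ and domination by $\mathbb{C}$ both tend to follow from density of $\Gamma$-orbits, so the example cannot be of an elementary homogeneous type. I would take for $M$ a hyperbolic Inoue (Inoue--Hirzebruch) surface of the kind occurring in the Forstneri\v{c}--L\'arusson family \cite{ForstnericLarusson} discussed above, writing $\pi:\widetilde M\to M$ for the universal covering, $\Gamma$ for the deck group, and $\psi$ for the $\Gamma$-invariant, negative, nonconstant psh function on $\widetilde M$ that is continuous off a curve $E\subset\widetilde M$. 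There are two things to prove: that $M$ is not universally dominated, and that $k_M\equiv 0$.

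For the first assertion, since $\psi$ is $\Gamma$-invariant it descends to a negative, continuous, nonconstant psh function $\bar{\psi}$ on the connected dense open set $M\setminus\pi(E)$. Suppose toward a contradiction that some $F\in\mathcal{O}(\mathbb{C},M)$ had dense image. Then $F(\mathbb{C})\not\subseteq\pi(E)$, so $A:=F^{-1}(\pi(E))$ is a proper analytic, hence discrete, subset of $\mathbb{C}$. On $\mathbb{C}\setminus A$ the function $\bar{\psi}\circ F$ is subharmonic and $\le 0$; being bounded above it extends subharmonically across the isolated points of $A$, and Liouville's theorem forces it to be constant, say $\equiv c$. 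Thus $F(\mathbb{C}\setminus A)\subseteq\{\bar{\psi}=c\}$, and since $F(\mathbb{C}\setminus A)$ is still dense in $M\setminus\pi(E)$ while $\bar{\psi}$ is continuous there, we conclude $\bar{\psi}\equiv c$, contradicting that $\bar{\psi}$ is nonconstant. Hence $M$ is not dominated by $\mathbb{C}$, so by Proposition 4.1 it is not universally dominated.

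For the second assertion I would use the standard covering identity $k_M(\pi(p),\pi(q))=\inf_{\gamma\in\Gamma}k_{\widetilde M}(p,\gamma q)$ (cf. \cite{KobayashiBook98}), which reduces the claim to $k_{\widetilde M}\equiv 0$, for then $k_M(\pi(p),\pi(q))\le k_{\widetilde M}(p,q)=0$ for all $p,q$. To see $k_{\widetilde M}\equiv 0$ I would join any two points of $\widetilde M$ by a chain of analytic disks of arbitrarily small total Kobayashi length, working from the explicit structure of $\widetilde M$ away from $E$ together with the chain of rational curves compactifying $M$: along each rational curve and along the flat ($\mathbb{C}$- and $\mathbb{C}^\ast$-) directions of that structure the Kobayashi pseudodistance already vanishes, and these pieces overlap enough to connect all of $\widetilde M$ with zero length.

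The main obstacle is exactly this verification that $k_{\widetilde M}\equiv 0$ (equivalently $k_M\equiv 0$), because it must be reconciled with the presence of $\psi$. Any nonconstant bounded holomorphic function on $\widetilde M$, or any nonconstant holomorphic map from $\widetilde M$ to a Kobayashi hyperbolic space, would immediately give $k_{\widetilde M}\not\equiv 0$; so part of the work is to confirm that the affine structure off $E$ supports none of these. There is no contradiction with $\psi$ itself: the vanishing of the Kobayashi pseudodistance is witnessed by chains of \emph{disks}, on which $\psi$ is free to vary, whereas $\psi$ is necessarily constant only on \emph{entire} curves (being negative and psh). Thus the level sets of $\psi$, although they trap every entire curve and thereby block a dense entire curve, need not trap the disk chains that force $k\equiv 0$. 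Carrying out the disk-chain construction explicitly for the Inoue--Hirzebruch surface, and checking the absence of the forbidden holomorphic objects, is where the genuine difficulty lies.
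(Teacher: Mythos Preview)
Your proposal has a genuine gap at exactly the point you yourself flag: you do not establish that $k_M\equiv 0$ for the Inoue--Hirzebruch surface. The disk-chain sketch is not a proof, and whether a given class~VII surface has identically vanishing Kobayashi pseudodistance is a delicate matter; the presence of a cycle of rational curves gives $k_M=0$ along that cycle, but extending this to all of $M$ is precisely what requires work, and you have supplied none. Your argument for non-dominability also leans on the $\Gamma$-invariance of $\psi$, which you assert but do not justify from the explicit description of the surface.

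The paper avoids all of this by choosing a far simpler, \emph{singular} example: the cone $\mathrm{cone}(\mathcal C,p)\subset\mathbb P^n$ over a smooth curve $\mathcal C\subset\mathbb P^{n-1}$ of genus $\ge 2$, with vertex $p\notin\mathbb P^{n-1}$. This space is a union of lines $\overline{pq}\cong\mathbb P^1$ passing through the common point $p$, so any two points lie on a chain of at most two $\mathbb P^1$'s and $k_M\equiv 0$ is immediate. On the other hand, projection from $p$ gives a surjective rational map $\mathrm{cone}(\mathcal C,p)\to\mathcal C$; since $\mathcal C$ is hyperbolic it is not universally dominated, and Theorem~4.5 then forbids universal dominability of the cone. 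By allowing the space to be singular, the paper makes both required properties verifiable in one line each, whereas your insistence on a smooth compact surface pushes you into an analysis you were unable to complete.
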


\begin{proof} We start with a hyperplane ${\mathbb P}^{n-1}\subset {\mathbb P}^n$ and a point $p\in {\mathbb P}^n\backslash {\mathbb P}^{n-1}$. Fix a nonsingular curve ${\mathcal C}\subset {\mathbb P}^{n-1}$ of genus $\ge 2$. The cone over ${\mathcal C}$ with vertex $p$ is defined as
  $$
  {\rm cone\,}({\mathcal C},p)=\bigcup_{q\in {\mathcal C}}\overline{qp},
  $$
  i.e., the union of the complex lines jointing $p$ to points of ${\mathcal C}$. Since ${\rm cone\,}({\mathcal C},p)$ is
  a union of ${\mathbb P}^1$'s intersects at $p$, its Kobayashi pseudodistance vanishes identically
  (compare \cite{KobayashiBook98}, Example 3.2.21). On the other hand, since the projection
  $\pi_p:{\rm cone\,}({\mathcal C},p)\rightarrow {\mathcal C}$ is a surjective rational map,
  ${\rm cone\,}({\mathcal C},p)$ cannot be universally dominated, in view of Theorem 4.5.
\end{proof}

We conclude this section by proposing a few open problems.

\begin{problem} Suppose $M$ is a universally dominated manifold and $\widetilde{M}$ is an unramified holomorphic covering of $M$. Is $\widetilde{M}$ universally dominated?

\end{problem}

\begin{problem} Let $M$ be a holomorphic fiber bundle. Suppose both the base and the fiber are universally dominated. Is $M$ also universally dominated?

\end{problem}

Note that the Oka property has the following property: If $\pi:X\rightarrow Y$ is a holomorphic fiber bundle with an Oka fiber, then $X$ is Oka iff $Y$ is Oka (cf. \cite{ForstnericBook}, Chapter 5). This holds in particular for unbranched coverings.

 \begin{proposition} Let $M$ be a universally dominated manifold and $E$ a holomorphic principle $G-$bundle over
 $M$ with $G$ being a connected complex Lie group (every complex Lie group is Oka and hence universally dominated). Then $E$ is also universally dominated.
\end{proposition}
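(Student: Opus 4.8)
The plan is to reduce the universal dominability of the total space $E$ to that of a product, exploiting the key structural fact established in Proposition 4.1: an irreducible complex space is universally dominated if and only if it is dominated by $\mathbb{C}$. Thus it suffices to construct a single holomorphic map $\mathbb{C}\rightarrow E$ with dense image. The natural strategy is to combine a dominant morphism into the base $M$ with a dominant morphism into the fiber group $G$, and to lift the former through the bundle projection $\pi:E\rightarrow M$.

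First I would choose, by Proposition 4.1, a dominant morphism $f:\mathbb{C}\rightarrow M$, so that $\overline{f(\mathbb{C})}=M$. The next step is to lift $f$ to a holomorphic section-like map into $E$. Since $G$ is a connected complex Lie group, it is Oka, and it is a standard fact in Oka theory that a holomorphic principal $G$-bundle over a Stein base (and more generally, the pullback bundle $f^\ast E\rightarrow \mathbb{C}$ over the Stein manifold $\mathbb{C}$) is holomorphically trivial, by the Oka--Grauert principle for principal bundles with complex Lie structure group. Hence $f^\ast E\cong \mathbb{C}\times G$, and this trivialization furnishes a holomorphic lift $\tilde{f}:\mathbb{C}\rightarrow E$ with $\pi\circ\tilde{f}=f$; in the trivialization $\tilde{f}$ corresponds to $z\mapsto (z,e)$ for the identity $e\in G$. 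However this lift alone need not have dense image, since it only hits one fiber value over each base point.

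To repair this I would use the freedom in the fiber direction. Working in the trivialization $f^\ast E\cong \mathbb{C}\times G$, I seek a holomorphic map $\mathbb{C}\rightarrow \mathbb{C}\times G$ of the form $z\mapsto (\phi(z),g(z))$ whose image is dense in $\mathbb{C}\times G$, and then compose with the bundle map $f^\ast E\rightarrow E$ to land in $E$. Because $\mathbb{C}\times G$ is a product of universally dominated spaces (note $G$ is universally dominated by Proposition 4.1, as every complex Lie group is Oka hence dominated by $\mathbb{C}$), Proposition 4.3/(1) guarantees $\mathbb{C}\times G$ is universally dominated, so it is dominated by $\mathbb{C}$: there is a dominant morphism $H:\mathbb{C}\rightarrow \mathbb{C}\times G$. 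Let $\pi_1:\mathbb{C}\times G\rightarrow \mathbb{C}$ be the first projection; then $\pi_1\circ H:\mathbb{C}\rightarrow \mathbb{C}$ has dense (hence all but nowhere-dense complement) image, and the composite $f\circ(\pi_1\circ H):\mathbb{C}\rightarrow M$ remains dominant because $f$ has dense image and $\pi_1\circ H$ has dense image. Composing $H$ with the bundle isomorphism $f^\ast E\cong\mathbb{C}\times G$ and the canonical map $f^\ast E\rightarrow E$ yields a holomorphic map $\mathbb{C}\rightarrow E$.

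It remains to verify density of the resulting image in $E$. The image covers a dense set of base points (since $f\circ(\pi_1\circ H)$ is dominant into $M$), and over the relevant fibers it sweeps out a dense subset of $G$-values (since $H$ is dominant into $\mathbb{C}\times G$, so its $G$-component is dense in $G$ over a dense set of first coordinates). Combining these two densities, together with local triviality of $E$ and continuity of the bundle charts, shows the image meets every nonempty open subset of $E$. Appealing once more to Proposition 4.1, the existence of this dominant morphism $\mathbb{C}\rightarrow E$ proves $E$ is universally dominated. The main obstacle I anticipate is the second paragraph: justifying that the pullback bundle $f^\ast E$ over $\mathbb{C}$ is holomorphically trivial via the Oka--Grauert principle, and carefully arranging the density argument so that the base-direction and fiber-direction densities genuinely combine to give density in the total space rather than merely in each factor separately.
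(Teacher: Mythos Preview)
Your approach is essentially the same as the paper's: pull back along a dominant $f:\mathbb{C}\to M$, trivialize $f^*E\cong\mathbb{C}\times G$ via Grauert's Oka principle, note that this product is universally dominated, and use the canonical map $f^*E\to E$. The density concern you flag is simpler than you fear: the map $f^*E\to E$ has image exactly $\pi^{-1}(f(\mathbb{C}))$, which is dense because $\pi$ is open and $f(\mathbb{C})$ is dense in $M$, so the composition with any dominant $H:\mathbb{C}\to f^*E$ is automatically dominant (this is just Proposition 4.3/(2), which is all the paper invokes).
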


\begin{proof} Take a dominant morphism $F:{\mathbb C}\rightarrow M$. The pullback bundle,
$F^\ast (E)$ over ${\mathbb C}$ is holomorphically trivial, thanks to Grauert's Oka-principle (cf. \cite{GrauertOka}).
It follows that $F^\ast (E)$ is universally dominated, so is $E$.
\end{proof}

 In particular, the Stiefel manifold ${\rm St}(k,n)$, i.e., the set of all $k-$tuples of linearly independent vectors in ${\mathbb C}^n$, is universally dominated. To see this, simply view ${\rm St}(k,n)$ as a principle fiber bundle over the Grassmannian ${\mathbb G}(k,n)$, with fiber ${\rm GL}(k,{\mathbb C})$. Indeed, every Stiefel manifold is also homogeneous and Oka.

\begin{problem} Is every projective algebraic manifold of general type\/ {\it not} universally dominated?
\end{problem}

 Of course, a resolution of the celebrated Green-Griffiths conjecture (cf. \cite{GreenGriffiths}) would give a positive answer to this problem.

\begin{problem} Suppose $M$ is a universally dominated manifold of dimension $n$ and $S$ is a closed subset in $M$ so that $M\backslash S$ is connected. Under which condition is $M\backslash S$ universally dominated?
\end{problem}

For instance, we do not know whether the complement $M$ of the totally real plane
$$
\{(z_1,z_2):{\rm Re\,}z_1={\rm Re\,}z_2=0\}
$$
 in ${\mathbb C}^2$ is universally dominated. It is known that $M$ contains a Fatou-Bieberbach domain (cf. \cite{RosayRudin}, Example 9.6).

\section{Picard points}

We recall first the following classical Lindel\"of principle:

\begin{proposition} [cf. \cite{Tsuji59}, p. 308]  Let $D$ be a domain in ${\mathbb C}$ which is bounded by a Jordan curve $C$ such that $0\in C$. The part of $C$ which lies in a neighborhood of $0$ is decomposed by $0$ into two parts $C_1,C_2$. If $f$ is a holomorphic function on $D$ satisfying $\lim_{z\rightarrow 0}f(z)=a$ when $z\rightarrow 0$ on $C_1$ and\/ $\lim_{z\rightarrow 0}f(z)=b$ when $z\rightarrow 0$ on $C_2$ and $a\neq b$, then $f$ takes any value infinitely often in $D$, with one possible exception.
\end{proposition}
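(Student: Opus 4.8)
The plan is to reduce to the unit disc and then run a normal–families (Montel) argument driven by the two distinct asymptotic values. First I would invoke Carath\'eodory's theorem: since $C$ is a Jordan curve, the Riemann map $\phi:\mathbb{D}\to D$ extends to a homeomorphism $\overline{\mathbb{D}}\to\overline{D}$; normalizing the preimage of $0$ to be $1$, we have $\phi(1)=0$, and $\phi$ carries the two boundary arcs $C_1,C_2$ abutting $0$ to the two complementary boundary arcs of $\partial\mathbb{D}$ abutting $1$. Writing $g=f\circ\phi$ and interpreting the boundary limits as asymptotic values along curves in $D$ landing at $0$, the hypotheses become: $g$ is holomorphic on $\mathbb{D}$ and there are two curves $\gamma_1,\gamma_2\subset\mathbb{D}$ landing at $1$ with $g\to a$ along $\gamma_1$ and $g\to b$ along $\gamma_2$, where $a\neq b$. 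Since the conclusion is conformally invariant, it suffices to treat $g$ at the point $1$.

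Next I would argue by contradiction. Because $f$ is holomorphic it never assumes $\infty$, so on the sphere $\infty$ is always exceptional; the assertion that $f$ assumes every finite value infinitely often with at most one exception is therefore equivalent to saying that at most two spherical values (one of them $\infty$) fail to be assumed infinitely often near $1$. Suppose instead that two distinct finite values $c_1,c_2$ are each assumed only finitely often. Then for $\epsilon$ small the set $U_\epsilon:=\mathbb{D}\cap B(1,\epsilon)$ contains none of their preimages, so $g$ restricted to $U_\epsilon$ omits the three spherical values $c_1,c_2,\infty$; that is, $g:U_\epsilon\to\mathbb{P}^1\setminus\{c_1,c_2,\infty\}$, a hyperbolic target. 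The tails of $\gamma_1,\gamma_2$ lie in $U_\epsilon$.

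I would then manufacture a normal family by zooming at $1$. Let $\{\phi_n\}$ be automorphisms of $\mathbb{D}$ with $\phi_n(0)\to1$, so that $\phi_n^{-1}(U_\epsilon)$ exhausts $\mathbb{D}$, and set $g_n:=g\circ\phi_n$, each defined on $\phi_n^{-1}(U_\epsilon)$ and hence on any fixed compact subset of $\mathbb{D}$ for $n$ large. Since every $g_n$ omits $c_1,c_2,\infty$, Montel's fundamental normality criterion makes $\{g_n\}$ normal on $\mathbb{D}$; extracting $g_{n_k}\to H$ locally uniformly, Hurwitz's theorem forces $H$ to be either holomorphic into $\mathbb{C}\setminus\{c_1,c_2\}$ or a constant in $\{c_1,c_2,\infty\}$. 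The heart of the matter is to feed the two asymptotic values into this single limit $H$: following $\gamma_1,\gamma_2$ through the zooming maps and using $g\to a$ along $\gamma_1$, $g\to b$ along $\gamma_2$ should force $H\equiv a$ from $\gamma_1$ and $H\equiv b$ from $\gamma_2$, contradicting $a\neq b$.

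The main obstacle is precisely this last identification, i.e.\ transferring the limits along $\gamma_1,\gamma_2$ to the constancy of the normal limit $H$. I would make the geometry explicit by first uniformizing the lens-shaped region cut off near $1$ by the two arcs onto a half-strip $\{\Re w>0,\ 0<\Im w<\pi\}$, sending $1$ to the $+\infty$ end and $C_1,C_2$ to the two horizontal edges, and replacing the hyperbolic zoom by the translations $w\mapsto w+n$. In these coordinates the images of $\gamma_1,\gamma_2$ run to $+\infty$ at (subsequentially) stable heights, so that $g_n\to a$ and $g_n\to b$ on two horizontal lines; the identity theorem then upgrades each to $H\equiv a$ and $H\equiv b$ on the whole strip. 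This step is essentially a Lindel\"of-type uniqueness statement, and the only care needed is the possible oscillation of the heights of $\gamma_1,\gamma_2$, handled by passing to a further subsequence. Alternatively one may quote the Gross--Iversen theory of asymptotic values and cluster sets (cf.\ the monograph of Collingwood and Lohwater).
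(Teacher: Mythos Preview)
The paper does not prove this proposition; it is simply quoted from Tsuji. So there is no proof in the paper to compare against, and I comment only on the soundness of your sketch.

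Your framework is the classical one and is correct in outline: Carath\'eodory to transport to $\mathbb D$, assume two finite values $c_1,c_2$ are omitted near the point, and use Montel to manufacture a normal-family contradiction against the two distinct asymptotic values. The passage from boundary limits along $C_1,C_2$ to interior asymptotic values along curves $\gamma_1,\gamma_2$ is legitimate once one reads the hypothesis as ``$f$ is continuous up to $C_1\cup C_2$ near $0$'' and lets $\gamma_i$ shadow $C_i$ from inside.

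There is, however, a genuine gap exactly where you flag it, and your proposed fix does not close it. The curves $\gamma_1,\gamma_2$ obtained by shadowing $C_1,C_2$ necessarily hug those arcs; in the half-strip model, with $C_1,C_2$ sent to the edges $\{\operatorname{Im}w=0\}$ and $\{\operatorname{Im}w=\pi\}$, one has $\operatorname{Im}\gamma_1\to 0$ and $\operatorname{Im}\gamma_2\to\pi$ as $\operatorname{Re}w\to+\infty$. They do \emph{not} run at interior ``stable heights'', and no subsequence places them in a compact subset of the open strip; hence the normal limit $H$, defined only on the open strip, never sees the values $a,b$, and the identifications $H\equiv a$, $H\equiv b$ are unsupported. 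The standard remedy---and this is essentially Tsuji's argument---is to lift to a bounded map first: since $U_\varepsilon$ is simply connected and $g(U_\varepsilon)\subset\mathbb C\setminus\{c_1,c_2\}$, lift $g$ through the universal cover $\lambda:\mathbb D\to\mathbb C\setminus\{c_1,c_2\}$ to a bounded $G:U_\varepsilon\to\mathbb D$. Boundedness is what makes the classical Lindel\"of theorem bite: $G$ then has angular limit $\alpha$ (a preimage of $a$) and also $\beta$ (a preimage of $b$) at the same boundary point, forcing $\alpha=\beta$ and $a=b$. One should also dispose of the degenerate cases $a\in\{c_1,c_2\}$ or $b\in\{c_1,c_2\}$ by a preliminary M\"obius change of the target. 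Your closing alternative---quoting Collingwood--Lohwater, or equivalently the Lehto--Virtanen theorem that a normal meromorphic function with an asymptotic value at $\zeta\in\partial\mathbb D$ has that value as an angular limit---is a valid shortcut and is in fact the cleanest way to finish.
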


\begin{corollary} Let $\Omega$ be a domain in ${\mathbb C}$ and $p\in\partial \Omega$. Suppose $\Omega$ contains a cone $\Lambda_p$ with vertex at $p$.  Then there is a holomorphic function $f$ on ${\mathbb C}\backslash \{p\}$ such that $p$ is a Picard point of $f$ on $\Omega$ and $f'(z)\neq 0$ for each $z\in {\mathbb C}\backslash \{p\}$.
\end{corollary}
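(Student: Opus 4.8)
The plan is to reduce to a normalized cone and then manufacture $f$ as the pull-back, under the inversion $\zeta = 1/(z-p)$, of an \emph{entire} function with nowhere-vanishing derivative that possesses two distinct finite asymptotic values along two rays which can be arranged to lie inside the image of the cone; the Lindel\"of principle (Proposition 5.1) then forces $p$ to be a Picard point. First I would translate so that $p=0$ and rotate so that the cone becomes $\Lambda = \{ re^{i\theta} : 0<r<r_0,\ |\theta|<\beta \}$ for some opening half-angle $\beta>0$. Since translations and rotations are biholomorphisms of ${\mathbb C}$ that preserve both required properties (holomorphy on ${\mathbb C}\setminus\{p\}$ and the nonvanishing of the derivative), it suffices to construct the function in this normalized situation and pull it back by $z\mapsto e^{-i\theta_0}(z-p)$. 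Under $\zeta=1/z$ the cone maps onto the sector $\Sigma=\{\rho e^{i\psi}:\rho>1/r_0,\ |\psi|<\beta\}$ running out to $\infty$, because $\arg(1/z)=-\arg z$.

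The function I would use is $h(\zeta)=\int_0^\zeta e^{-s^n}\,ds$, where $n$ is a fixed integer with $n>2\pi/\beta$. Then $h$ is entire and $h'(\zeta)=e^{-\zeta^n}$ never vanishes, so $f(z):=h(1/z)$ is holomorphic on ${\mathbb C}\setminus\{0\}$ with $f'(z)=-e^{-z^{-n}}/z^2\neq 0$ there. The integrand decays along each ray $\arg s=2\pi j/n$, and parametrizing $s=te^{2\pi ij/n}$ shows that along that ray the integral converges to $e^{2\pi ij/n}\Gamma(1+1/n)$. Since $n>2\pi/\beta$ gives $2\pi/n<\beta$, the two rays $\arg\zeta=0$ and $\arg\zeta=2\pi/n$ both lie in $\Sigma$, carrying the two limits $a:=\Gamma(1+1/n)$ and $b:=e^{2\pi i/n}\Gamma(1+1/n)$, which are distinct because $n\ge 2$. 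Translating back through $\zeta=1/z$, these correspond to the rays $\arg z=0$ and $\arg z=-2\pi/n$ in $\Lambda$, along which $f(z)\to a$, resp. $f(z)\to b$, as $z\to 0$.

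Finally I would apply the Lindel\"of principle on the pie-slice $D=\{re^{i\theta}:0<r<r_1,\ -2\pi/n<\theta<0\}$ with $r_1<r_0$, which is a Jordan domain contained in $\Lambda\subset\Omega$ having $0$ on its boundary; near $0$ its boundary consists of the two radial segments $C_1$ ($\theta=0$) and $C_2$ ($\theta=-2\pi/n$), along which $f$ tends to $a$ and $b$ respectively. By Proposition 5.1, $f$ assumes in $D$ every value, with at most one exception, infinitely often. For any such value $c$, the infinitely many $c$-points lie in the bounded set $D$ and must accumulate; since $f$ is holomorphic and nonconstant on ${\mathbb C}\setminus\{0\}$, the zeros of $f-c$ are isolated at every point of $\overline D\setminus\{0\}$, so the only possible accumulation point is $0$. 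Hence every neighborhood of $0$ contains infinitely many $c$-points, all lying in $D\subset\Omega$, which is exactly the statement that $0$ is a Picard point of $f$ on $\Omega$. Pulling back by the initial isometry produces the desired $f$ on ${\mathbb C}\setminus\{p\}$.

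The main obstacle is the middle step: one needs an entire function with nowhere-vanishing derivative and with \emph{two distinct finite asymptotic values along rays that fit inside an arbitrarily thin, arbitrarily oriented cone}. The nonvanishing derivative rules out the naive candidate $\exp(\lambda/(z-p))$ (which only yields the value $0$ and blow-up along rays, never two distinct finite limits), whereas $\int_0^\zeta e^{-s^n}\,ds$ resolves both difficulties simultaneously: its derivative $e^{-\zeta^n}$ is automatically nonzero, and taking $n$ large packs enough decay directions (with the distinct asymptotic values $e^{2\pi ij/n}\Gamma(1+1/n)$) into any given sector.
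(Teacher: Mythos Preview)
Your argument is correct and follows the same skeleton as the paper's proof: reduce by an affine change to a cone at the origin, produce a function on $\mathbb C^\ast$ with nowhere-vanishing derivative and two distinct asymptotic values along rays inside the cone, and invoke the Lindel\"of principle (Proposition~5.1) on a small Jordan sector to force a Picard point. The only substantive difference is the choice of the test function. The paper uses the much simpler $f_1(z)=e^{1/z^2}$: its derivative $-2z^{-3}e^{1/z^2}$ is nowhere zero, and one reads off directly that $f_1\to\infty$ along the ray $y=(1-\varepsilon)x$, $x>0$, while $f_1\to 0$ along $y=(1+\varepsilon)x$; a rotation then places this arbitrarily thin sector inside the given $\Lambda_p$. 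Note in particular that Proposition~5.1 does not require $a$ and $b$ to be finite, so the scenario you dismissed as ``only $0$ and blow-up'' is in fact exactly what the paper exploits. Your construction via $h(\zeta)=\int_0^\zeta e^{-s^n}\,ds$ is more elaborate but buys you two \emph{finite} asymptotic values $e^{2\pi ij/n}\Gamma(1+1/n)$, and by choosing $n$ large you place the decay directions inside any sector without a separate rotation step. Both routes are valid; the paper's is shorter, yours is a bit more self-contained in its appeal to the Lindel\"of principle.
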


\begin{proof} Put $f_1(z)=e^{1/z^2}$. Clearly, $f_1\in \mathcal{O}({\mathbb C}^\ast)$. Write $z=x+iy$. Since
$$
f_1(z)=e^{\frac{x^2-y^2}{(x^2+y^2)^2}}e^{-\frac{2ixy}{(x^2+y^2)^2}},
$$
we see that for each $0<\varepsilon<1$, $\lim_{z\rightarrow 0}f_1(z)=\infty$ when $z\rightarrow 0$ on the ray $l_1:y=(1-\varepsilon)x$, $x>0$, and $\lim_{z\rightarrow 0}f_1(z)=0$ when $z\rightarrow 0$ on the ray $l_2:y=(1+\varepsilon)x$, $x>0$. By virtue of the previous proposition, we conclude that $0$ is a Picard point for $f_1$ on the following (infinite) cone
$$
V_\varepsilon=\left\{(x,y)\in {\mathbb R}^2:(1-\varepsilon)x<y<(1+\varepsilon)x,\ x>0\right\}.
$$
For sufficiently small $\varepsilon$, we have a complex affine map $\Phi:V_\varepsilon\cap B(0,\varepsilon) \rightarrow \Lambda_p$ obtained by a composition of translation and rotation such that $\Phi(0)=p$. The desired function may be chosen as $f=f_1\circ \Phi^{-1}$.
\end{proof}

\begin{remark} The ray $y=x, x>0$ is a Julia ray of $f$. Julia proved that each $f\in {\mathcal O}({\mathbb C}^\ast)$ with $0$ as essential singularity has at least one Julia ray.
\end{remark}

Next we recall two types of Lindel\"of principles of several complex variables as follows.

\begin{definition} Let $M,N$ be complex manifolds, and ${\mathcal O}(M,N)$ be the set of holomorphic maps from $M$ to $N$. A map $F\in {\mathcal O}(M,N)$ is called\/ {\it normal}\/ if the family $\{F\circ H:H\in {\mathcal O}({\mathbb D},M)\}$ forms a normal family in the sense of Wu \cite{Wu_Normal}.
\end{definition}

We remark that for a bounded domain $\Omega\subset {\mathbb C}^n$, a function $f\in {\mathcal O}(\Omega)$ is normal if it omits (at least) two values (cf. \cite{CimaKrantz}).

\begin{definition} Let $\Omega\subset {\mathbb C}^n$ be a bounded domain with $C^2-$boundary. Let $f\in {\mathcal O}(\Omega)$ and $p\in \partial\Omega$. We say that $f$ has non-tangential limit $c$ at $p$ if
$$
\lim_{\Gamma_\alpha(p)\ni z\rightarrow p} f(z)=c
$$
for any $\alpha>0$, where
$$
\Gamma_\alpha(p)  =  \left\{z\in \Omega:|z-p|<(1+\alpha)\delta_\Omega(z)\right\}.
$$
A curve $\gamma:[0,1)\rightarrow \Omega$ which terminates at $p$ is said to be non-tangential if it is contained in some $\Gamma_\alpha(p)$.
\end{definition}

We have the following Lindel\"of principle due to Cima and Krantz:

\begin{proposition}[cf. \cite{CimaKrantz}, Lemma 3.1 and the subsequent remark]  Let $\Omega\subset {\mathbb C}^n$ be a bounded domain with $C^2-$boundary and $p\in \partial \Omega$. Suppose $f\in {\mathcal O}(\Omega)$ is a normal function, and $f$ has limit $c$ along some non-tangential curve $\gamma$ terminating at $p$. Then $f$ has non-tangential limit $c$ at $p$.
\end{proposition}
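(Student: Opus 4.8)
The plan is to reduce the assertion to a one--variable statement along a suitable analytic disc, with normality supplying the compactness needed to pass to the limit. It suffices to show that $f(z_j)\to c$ for every $\alpha>0$ and every sequence $z_j\to p$ with $z_j\in\Gamma_\alpha(p)$; the non-tangential limit then exists and equals $c$. Write $s_j=\delta_\Omega(z_j)$. Since $\gamma\subset\Gamma_{\alpha_0}(p)$ for some $\alpha_0$ and $\delta_\Omega$ decreases continuously to $0$ along $\gamma$, I can pick, for all large $j$, a point $w_j=\gamma(t_j)$ on the curve with $\delta_\Omega(w_j)=s_j$; then $w_j\to p$ and, by hypothesis, $f(w_j)\to c$. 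Both $z_j$ and $w_j$ lie in the common approach region $\Gamma_A(p)$ with $A=\max\{\alpha,\alpha_0\}$, at the same height $s_j$ and within distance $(1+A)s_j$ of $p$.

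The geometric input comes from the $C^2$ boundary. Using the interior and exterior ball conditions at $p$ and the attendant two--sided estimates for the Kobayashi--Royden metric near a $C^2$ boundary point, points lying in $\Gamma_A(p)$ at a common height $s_j$ and within $O(s_j)$ of $p$ are at uniformly bounded Kobayashi distance: $k_\Omega(z_j,w_j)\le M$, and, more generally, the entire sub-arc of $\gamma$ with heights in $[s_j/2,\,2s_j]$ lies in the Kobayashi ball $B_{k_\Omega}(z_j,M)$. Equivalently, after the isotropic rescaling $\zeta\mapsto(\,\cdot\,-w_j)/s_j$, which flattens $\partial\Omega$ to its tangent half--space (the quadratic curvature terms scaling away by the $C^2$ hypothesis), these configurations converge to a fixed bounded configuration in the model domain. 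I would use this to build holomorphic discs $H_j\in{\mathcal O}({\mathbb D},\Omega)$ with $H_j(0)=z_j$ that meet prescribed points $\gamma(\tau_j^{(1)}),\dots,\gamma(\tau_j^{(N)})$ of the curve (chosen at heights comparable to $s_j$ and tending to $p$) at parameters $\zeta_j^{(1)},\dots,\zeta_j^{(N)}$ confined to a fixed compact subset $\{|\zeta|\le\rho\}$, $\rho<1$.

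With the discs in hand, normality finishes the argument. Since $f$ is normal, the family $\{f\circ H_j\}$ is normal, so along a subsequence $f\circ H_j\to g$ locally uniformly on ${\mathbb D}$, where $g:{\mathbb D}\to{\mathbb C}_\infty$ is a single--variable holomorphic function (a limit of holomorphic functions in the spherical metric is either holomorphic or $\equiv\infty$). At the marked parameters one has $g(\zeta^{(i)})=\lim_j f(\gamma(\tau_j^{(i)}))=c$ (with $\zeta^{(i)}=\lim_j\zeta_j^{(i)}\in\{|\zeta|\le\rho\}$), while $g(0)=\lim_j f(z_j)$. Letting the number $N$ of interpolation points grow and running a diagonal extraction forces $g=c$ on a nondegenerate arc in ${\mathbb D}$; by the one--variable identity theorem $g\equiv c$. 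In particular $\lim_j f(z_j)=g(0)=c$, which is the desired non-tangential limit. (The value $\infty$ is excluded because $g$ already equals the finite number $c$ on an arc.)

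The main obstacle is precisely the disc construction of the second paragraph: converting the qualitative bound $k_\Omega(z_j,w_j)\le M$ into genuine analytic discs that interpolate a growing number of curve points while keeping all interpolation parameters inside a fixed compact subset of ${\mathbb D}$, uniformly in $j$. This is where the $C^2$ regularity of $\partial\Omega$ is indispensable --- both for the two--sided Kobayashi metric estimate that yields the uniform distance bound and for the rescaling that produces a controlled limit configuration in the tangent half--space --- and it is the heart of the Cima--Krantz lemma. Once these discs are available, the reduction to the one--variable identity theorem (in the spirit of the classical Lindel\"of principle, Proposition 5.1) is routine.
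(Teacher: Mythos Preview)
The paper does not supply a proof of this proposition; it is quoted from Cima--Krantz (note the attribution to \cite{CimaKrantz}, Lemma~3.1 and the subsequent remark), so there is no in-paper argument to compare your proposal against.

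That said, your sketch is essentially the Lehto--Virtanen/Cima--Krantz strategy: rescale by the height $s_j$, use the $C^2$ boundary to control the Kobayashi geometry and produce analytic discs $H_j$ through $z_j$ that meet the curve $\gamma$, invoke normality to pass to a subsequential limit $g$ on ${\mathbb D}$, and finish with the one-variable identity theorem. You correctly identify that the substantive step is the disc construction, and that is exactly what the $C^2$ Kobayashi-metric estimates in \cite{CimaKrantz} supply. One simplification worth noting: you do not need to interpolate a growing number $N$ of curve points on each disc and run a diagonal extraction. It suffices to choose each $H_j$ so that an entire sub-arc of $\gamma$ at heights comparable to $s_j$ lies in $H_j(\{|\zeta|\le\rho\})$; after passing to the limit these arcs yield a set with an interior accumulation point in ${\mathbb D}$ on which $g=c$, and the identity theorem applies in one stroke.
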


Generalizing the classical results of Bagemihl-Seidel \cite{BagemihlSeidel} from one complex variable, K. T. Hahn proved the following

\begin{proposition}[cf. \cite{HahnNormal}, Theorem 4 and the subsequent remark] Let $\Omega\subset {\mathbb C}^n$ be a bounded domain with $C^2-$boundary at $p\in \partial \Omega$. Let $\{p_j\}_{j=1}^\infty$ be a sequence of points in $\Omega$ which tends to $p$ such that
\begin{enumerate}
\item $\lim_{j\rightarrow \infty} k_\Omega(p_j,p_{j+1})=0$,

\item $\lim_{j\rightarrow \infty} \frac{{\rm dist\,}(p_j,{\mathbb C}\nu_p)^2}{{\rm dist\,}(p_j,{\mathbb C}T_p)}=0$.
\end{enumerate}
If $f\in {\mathcal O}(\Omega)$ is a normal function such that $f(p_j)\rightarrow c$ as $j\rightarrow \infty$, then $f$ has non-tangential limit $c$ at $p$. Here ${\mathbb C}T_p$ and ${\mathbb C}\nu_p$ are the complex tangent space and the complex normal space at $p$, respectively, and $k_\Omega$ is the Kobayashi distance of $\Omega$.
\end{proposition}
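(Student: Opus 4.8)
The plan is to reduce the statement to the Cima--Krantz principle (Proposition 5.3, cf.\ \cite{CimaKrantz}): it suffices to exhibit a \emph{single} non-tangential curve $\gamma$ terminating at $p$ along which $f\to c$, for then Proposition 5.3 upgrades this to a non-tangential limit. The whole argument runs on the infinitesimal description of normality. By Wu's theory of normal families (\cite{Wu_Normal}), $f$ is normal exactly when it is Lipschitz as a map from $(\Omega,k_\Omega)$ to the Riemann sphere with its spherical metric $\sigma$; that is, there is a constant $C$ with
$$
\sigma\big(f(z),f(w)\big)\le C\,k_\Omega(z,w),\qquad z,w\in\Omega,
$$
obtained by bounding the spherical derivative of $f$ by the Kobayashi--Royden metric and integrating. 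Every convergence assertion below is read off from this one inequality together with estimates for $k_\Omega$ near $p$.

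First I would fix $C^2$ boundary coordinates, so that $p=0$, the inner real normal is the positive $\mathrm{Re}\,z_n$ axis, $\mathbb{C}T_p=\{z_n=0\}$ and $\mathbb{C}\nu_p=\{z'=0\}$ with $z'=(z_1,\dots,z_{n-1})$; then $\mathrm{dist}(z,\mathbb{C}\nu_p)=|z'|$ records the complex-tangential displacement and $\mathrm{dist}(z,\mathbb{C}T_p)=|z_n|$ the complex-normal one. The local input is the two-sided comparison for the Kobayashi metric at a $C^2$ point: it is of size $1/\delta_\Omega$ in the complex-normal direction and $1/\sqrt{\delta_\Omega}$ in the complex-tangential directions. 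Granting this, I set $\hat p_j:=(0,\delta_\Omega(p_j))$, the projection of $p_j$ onto the inner real normal at the same depth, and split the cost $k_\Omega(p_j,\hat p_j)$ into its complex-tangential part (of Euclidean size $|z'_j|$, hence costing $\asymp |z'_j|/\sqrt{\delta_\Omega(p_j)}$) and a complex-normal part within the slice $\Omega\cap\mathbb{C}\nu_p$. In the relevant regime $|z_n^{(j)}|\asymp\delta_\Omega(p_j)$, the tangential part equals $\big(|z'_j|^2/|z_n^{(j)}|\big)^{1/2}$, so hypothesis (2) is precisely the statement that it tends to $0$; the Lipschitz estimate then yields $f(\hat p_j)\to c$.

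It remains to link the $\hat p_j$ into a non-tangential curve. They all lie on the inner real normal, so each straight segment $[\hat p_j,\hat p_{j+1}]$ sits in a Stolz cone $\Gamma_\alpha(p)$ with $\alpha$ arbitrarily small, and their union, ordered by decreasing depth, is a non-tangential curve $\gamma$ terminating at $0$. Hypothesis (1) enters here: since displacement along the normal between depths $\delta$ and $\delta'$ costs a bounded multiple of $|\log(\delta'/\delta)|$, the bound $k_\Omega(p_j,p_{j+1})\to0$ forces $\delta_\Omega(p_{j+1})/\delta_\Omega(p_j)\to1$, so each segment has Kobayashi length tending to $0$; by the Lipschitz estimate $\sigma(f(z),f(\hat p_j))\to0$ uniformly on the $j$-th segment, whence $f\to c$ along $\gamma$. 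Feeding $\gamma$ into Proposition 5.3 produces the non-tangential limit $c$ at $p$.

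I expect the main obstacle to be the sharp anisotropic Kobayashi estimates at a merely $C^2$ boundary point (normal $\asymp1/\delta_\Omega$, tangential $\asymp1/\sqrt{\delta_\Omega}$, with no pseudoconvexity at hand), together with the bookkeeping in the $J$-normal direction: the complex-normal coordinate $z_n^{(j)}$ carries an imaginary part $\mathrm{Im}\,z_n^{(j)}$ which is itself tangential to $\partial\Omega$, and one must show that it neither spoils the comparison $|z_n^{(j)}|\asymp\delta_\Omega(p_j)$ used to convert hypothesis (2) into $k_\Omega(p_j,\hat p_j)\to0$, nor causes the linking segments to bulge out of a fixed cone $\Gamma_\alpha(p)$. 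This is exactly the point at which Hahn's delicate boundary estimates \cite{HahnNormal} are indispensable.
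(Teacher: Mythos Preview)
The paper does not prove this proposition at all: it is quoted verbatim from Hahn \cite{HahnNormal} (Theorem~4 and the remark following it) and used as a black box in the proof of Theorem~1.2. There is therefore no ``paper's own proof'' to compare your sketch against.

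That said, a brief comment on your outline. The strategy of reducing to Proposition~5.3 via a Lipschitz bound $\sigma(f(z),f(w))\le C\,k_\Omega(z,w)$ and a projection $p_j\mapsto\hat p_j$ onto the inner normal is the natural one, and it is essentially Hahn's own architecture. Two points deserve care. First, Proposition~5.3 is stated for domains with global $C^2$ boundary, whereas Proposition~5.4 assumes $C^2$ only at $p$; you would need a localized version (which Cima--Krantz in fact supply, and which Hahn invokes). Second, and more serious, the anisotropic Kobayashi estimates you invoke (normal $\asymp 1/\delta_\Omega$, tangential $\asymp 1/\sqrt{\delta_\Omega}$) are \emph{upper} bounds that hold without pseudoconvexity, but the \emph{lower} bound on the normal cost---which you use to deduce $\delta_\Omega(p_{j+1})/\delta_\Omega(p_j)\to 1$ from hypothesis~(1)---does require at least a local peak function or an inner ball at $p$; the $C^2$ hypothesis gives you the inner ball, so this is fine, but it should be said. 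You correctly flag the comparison $|z_n^{(j)}|\asymp\delta_\Omega(p_j)$ as the delicate step: hypothesis~(2) alone does not prevent $p_j$ from approaching $p$ along a direction nearly tangent to $\partial\Omega$ inside the complex-normal line (large $\mathrm{Im}\,z_n^{(j)}$ relative to $\delta_\Omega(p_j)$), and Hahn handles this with a separate argument that your sketch defers to his paper.
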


\begin{remark} Actually, Hahn's theorem is much more general. He considered normal maps from $\Omega$ to a relatively compact open set $X$ in a hermitian manifold $N$ with hermitian distance $d_N$. Note that ${\mathbb C}$ may be regarded as a relatively compact domain in the Riemann sphere with Fubini-Study distance.
\end{remark}

\begin{lemma}
Let ${\mathbb B}$ denote the unit ball in ${\mathbb C}^n$. Let $\{p_j=(t_j,0,\cdots,0)\}_{j=1}^\infty$ be a sequence of points in ${\mathbb B}$ defined inductively by
\begin{equation}
t_1=1-\frac{1}{e^2},\ \ \ t_{j+1}=t_j+(1-t_j)|\log (1-t_j)|^{-1}, \ \ \ j\ge 1.
\end{equation}
Then $p_j\rightarrow (1,0,\cdots,0)$ and\/ $k_{\mathbb B}(p_j,p_{j+1})\rightarrow 0$ as $j\rightarrow \infty$.
\end{lemma}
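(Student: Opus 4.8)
The plan is to reduce both assertions to elementary one-variable estimates. The key structural observation is that the points $p_j=(t_j,0,\dots,0)$ all lie on the complex line $\ell=\{(z,0,\dots,0):z\in\C\}$, whose slice $\ell\cap{\mathbb B}$ is the unit disc $\D$. Since the inclusion $\iota:\D\to{\mathbb B}$, $w\mapsto(w,0,\dots,0)$, and the coordinate projection $\pi:{\mathbb B}\to\D$, $(z_1,\dots,z_n)\mapsto z_1$, are both holomorphic (note $|z_1|\le|z|<1$ on ${\mathbb B}$) and satisfy $\pi\circ\iota=\mathrm{id}_{\D}$, the distance-decreasing property of the Kobayashi distance yields $k_{\D}(t_j,t_{j+1})\le k_{{\mathbb B}}(p_j,p_{j+1})\le k_{\D}(t_j,t_{j+1})$, hence
$$
k_{{\mathbb B}}(p_j,p_{j+1})=k_{\D}(t_j,t_{j+1})=\tfrac12\log\frac{1+\rho_j}{1-\rho_j},\qquad \rho_j:=\frac{t_{j+1}-t_j}{1-t_jt_{j+1}},
$$
the last equality being the explicit formula for the Poincar\'e distance between the real points $t_j,t_{j+1}\in(0,1)$. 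Thus everything is controlled by the pseudohyperbolic quantity $\rho_j$.

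Next I would rewrite the recursion in terms of $s_j:=1-t_j$. A direct substitution turns (5.1) into $s_{j+1}=s_j\bigl(1-|\log s_j|^{-1}\bigr)$ with $s_1=e^{-2}$. Since $s_1=e^{-2}<e^{-1}$, one checks by induction that $s_j$ is positive and strictly decreasing: as $|\log s_j|>2$, the factor $1-|\log s_j|^{-1}$ stays in $(1/2,1)$, so the sequence converges to some $s_\infty\ge0$. Passing to the limit in the recursion forces $s_\infty|\log s_\infty|^{-1}=0$, which is impossible for $s_\infty\in(0,1)$; hence $s_\infty=0$, i.e.\ $t_j\to1$ and $p_j\to(1,0,\dots,0)$, establishing the first claim.

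For the second claim I would estimate $\rho_j$ directly. By the recursion the numerator is $t_{j+1}-t_j=s_j|\log s_j|^{-1}$, while the denominator satisfies $1-t_jt_{j+1}=s_j+s_{j+1}-s_js_{j+1}\ge s_j$ because $s_{j+1}(1-s_j)>0$. Therefore $\rho_j\le|\log s_j|^{-1}\to0$ as $s_j\to0$, and feeding this into the Poincar\'e formula gives $k_{{\mathbb B}}(p_j,p_{j+1})\to0$. I expect no serious obstacle here: the only step requiring genuine care is the reduction to the disc, and once the identity $k_{{\mathbb B}}(p_j,p_{j+1})=k_{\D}(t_j,t_{j+1})$ is in hand, both conclusions follow from the monotone convergence of $s_j$ and the one-line bound $\rho_j\le|\log s_j|^{-1}$.
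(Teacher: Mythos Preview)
Your proof is correct and follows essentially the same approach as the paper: both reduce to the Poincar\'e disc via the distance-decreasing property (the paper uses only the inequality $k_{\mathbb B}\le k_{\mathbb D}$, while you prove equality using the projection as well), show $t_j\to1$ by monotonicity and passing to the limit in the recursion, and then verify that the pseudohyperbolic quantity $\rho_j\to0$. Your substitution $s_j=1-t_j$ streamlines the bookkeeping but is purely cosmetic; one trivial slip is that $|\log s_1|=2$ exactly (not $>2$), so the factor at $j=1$ equals $1/2$ rather than lying strictly in $(1/2,1)$, but this does not affect the argument.
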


\begin{proof}
Note that $\{t_j\}_{j=1}^\infty$ is an increasing sequence of positive numbers, thus $t_j\rightarrow a$ for suitable $t_1\le a\le 1$. Let $j\rightarrow \infty$ in (5.1), we immediately get $a=1$, i.e., $p_j\rightarrow (1,0,\cdots,0)$.

 Since
$$
\frac{|t_{j+1}-t_j|}{|1-t_jt_{j+1}|}=\frac{|\log(1-t_j)|^{-1}}{1+t_j-t_j|\log(1-t_j)|^{-1}}\rightarrow 0,
$$
we have
$$
k_{\mathbb B}(p_j,p_{j+1})\le k_{\mathbb D}(t_j,t_{j+1})=\log \frac{1+\frac{|t_{j+1}-t_j|}{|1-t_jt_{j+1}|}}{1-\frac{|t_{j+1}-t_j|}{|1-t_jt_{j+1}|}}\rightarrow 0.
$$
\end{proof}

\begin{lemma} Let $\Omega$ be a domain in ${\mathbb C}^n$ and $f\in \mathcal{O}(\Omega)$.  If there is a dense set $\{p_j\}_{j=1}^\infty\subset \partial \Omega$ of Picard points for $f$, then each point in $\partial\Omega\cup \{\infty\}$  ($\partial \Omega$ if $\Omega$ is bounded) is a Picard point of $f$. Indeed, the set of Picards points is always closed.
\end{lemma}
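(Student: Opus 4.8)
The plan is to prove the unconditional closedness assertion first, since the density statement then follows as an immediate corollary. Throughout I would work in the one-point compactification ${\mathbb C}^n\cup\{\infty\}$, so that a neighborhood of a finite boundary point $p$ is an ordinary ball $B(p,r)$ while a neighborhood of $\infty$ is $B(\infty,r)=\{z\in{\mathbb C}^n:|z|>r\}$; this lets me treat finite boundary points and $\infty$ on exactly the same footing. The decisive preliminary step is to reformulate the definition into the following convenient form: a point $p$ is a Picard point of $f$ if and only if, for \emph{every} neighborhood $U$ of $p$, at most one complex number is assumed only finitely often by $f$ on $U\cap\Omega$. The only elementary fact I would invoke repeatedly is the obvious monotonicity that a value assumed only finitely often on a set is assumed only finitely often on every subset.

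To show that the set $P$ of Picard points is closed, I would let $p$ lie in the closure of $P$ and fix an arbitrary neighborhood $U$ of $p$. Suppose, for contradiction, that two distinct values $b_1\neq b_2$ are each assumed only finitely often by $f$ on $U\cap\Omega$. Since $p\in\overline{P}$ and $U$ is a neighborhood of $p$, there exists a genuine Picard point $q\in P\cap U$. Choosing a neighborhood $V$ of $q$ with $V\subset U$, the inclusion $V\cap\Omega\subset U\cap\Omega$ forces $f$ to assume each of $b_1,b_2$ only finitely often on $V\cap\Omega$ as well. This contradicts the fact that $q$ is a Picard point, for which at most one value may be assumed finitely often on $V\cap\Omega$. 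Hence at most one value is assumed finitely often on $U\cap\Omega$; as $U$ was arbitrary, the reformulation above gives $p\in P$, so $P$ is closed.

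From this I would deduce the first assertion directly. Because $\{p_j\}\subset P$ and $P$ is closed, we get $\overline{\{p_j\}}\subset P$; by the density hypothesis the closure of $\{p_j\}$ exhausts $\partial\Omega\cup\{\infty\}$, the accumulation at $\infty$ in the unbounded case being furnished by reading the density in the compactified boundary. Consequently every point of $\partial\Omega\cup\{\infty\}$ (respectively $\partial\Omega$ if $\Omega$ is bounded) is a Picard point of $f$.

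I expect no serious analytic obstacle here: the whole argument is point-set and combinatorial, resting only on the subset monotonicity noted above, with the reformulation of the Picard condition doing the real work. The single place that genuinely needs care is the uniform treatment of $\infty$ — one must apply the definition of Picard point and the word ``neighborhood'' with $B(\infty,r)$ in place of an ordinary ball, and interpret the density so that $\infty$ indeed lies in the closure of $\{p_j\}$ when $\Omega$ is unbounded, so that the closed set $P$ is forced to contain it.
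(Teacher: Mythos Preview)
Your proof is correct and follows essentially the same contrapositive argument as the paper: a non-Picard point has a neighborhood on which two values are bad, and any Picard point inside that neighborhood inherits the same two bad values on a smaller neighborhood, a contradiction. The only difference is organizational---you isolate the closedness of the Picard set first and then invoke density, whereas the paper argues the density statement directly; your use of ``assumed only finitely often'' in place of the paper's ``omits'' is a harmless reformulation (the two agree after shrinking the neighborhood), and your explicit flagging of the $\infty$ case is at least as careful as the paper's ``the unbounded case is similar.''
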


\begin{proof} The argument is elementary (cf. \cite{CollingwoodLohwater}). Suppose first $\Omega$ is bounded. Let $p\in \partial \Omega$ be given. Suppose $p$ is not a Picard point for $f$. Then there would be a number $r>0$ such that $f(B(p,r)\cap \Omega)$ omits at least two complex numbers $a,b$. Since $\{p_j\}_{j=1}^\infty$ is dense in $\partial \Omega$, we have at least one point $p_{j_0}\in B(p,r/2)$. Thus $f(B(p_{j_0},r/2)\cap \Omega)$ also omits $a,b$, so that $p_{j_0}$ is not a Picard point of $f$, a contradiction. The case when $\Omega$ is unbounded is similar.
\end{proof}

\begin{proof}[Proof of Theorem 1.2.] We show first that there are a dense sequence $\{p_j\}_{j=1}^\infty$ of points in $\partial \Omega$ and a mutually disjoint sequence of balls $\{B_j\}_{j=1}^\infty$ in $\Omega$ such that $\partial B_j $ touches $\partial \Omega$ only at $p_j$ for each $j$. To see this, take first a dense sequence $\{q_j\}_{j=1}^\infty$ of points in $\partial \Omega$. If $q_1$ is an isolated point of $\partial \Omega$, then we take $p_1=q_1$ and a small ball $B_1$ in $\Omega$ such that $\partial B_1$ touches $\partial \Omega$ only at $p_1$. Otherwise, we may choose $p_1^\ast\in B(q_1,1/2)\cap\Omega$ and $p_1\in \partial \Omega$ such that $|p_1^\ast-p_1|=\delta_\Omega(p_1^\ast)$. It suffices to take
$$
B_1=B((p_1^\ast+p_1)/2, \ \delta_\Omega(p_1^\ast)/2).
$$
Suppose we have chosen $p_j$ and $B_j$ for $1\leq j\leq k-1$. If $q_k$ is an isolated point of $\partial \Omega$, then we take $p_k=q_k$ and a small ball $B_k$ in $\Omega\backslash\bigcup_{j=1}^{k-1}\overline{B}_j$ such that $\partial B_k$ touches $\partial \Omega\cup \bigcup_{j=1}^{k-1}\partial B_j$ only at $p_k$. Otherwise, we may choose $p_k^\ast\in B(q_k,{1}/{2^k})\cap\left(\Omega\backslash\bigcup_{j=1}^{k-1}\overline{B}_j\right)$ and $p_k\in \partial \Omega$ such that
 $$
|p_k^\ast-p_k|=\delta_\Omega(p_k^\ast)<{\rm dist}\left(p_k^\ast, \bigcup_{j=1}^{k-1}\overline{B}_j\right).
$$
 It suffices to take
 $$
 B_k=B\left((p_k^\ast+p_k)/2, \delta_\Omega(p_k^\ast)/2\right).
 $$

By virtue of Lemma 5.5, we may choose in each $B_j$ a sequence $\{z^{j\mu}\}_{\mu=1}^\infty$ of points on the radius terminating at $p_j$
such that $\lim_{\mu\rightarrow \infty} z^{j\mu}=p_j$,
$\lim_{\mu\rightarrow \infty} k_{B_j}\left(z^{j\mu},z^{j\mu+1}\right)=0,
$
and
$$
B\left(z^{j\mu},\frac12\hat{\delta}(z^{j\mu})|\log \hat{\delta}(z^{j\mu})|^{-1}\right)\subset B_j
$$
 are mutually disjoint for all $j,\mu$.
By virtue of Proposition 2.2/(2) and Lemma 2.4, we find for each $\alpha>4$ a function $f\in {\mathcal O}(\Omega)$ such that $f(z^{j(2\mu-1)})=0$ and $f(z^{j(2\mu)})=1$ for any $j,\mu\ge 1$, and
 $$
 |f(z)|\le C_1 e^{C_2 \hat{\delta}(z)^{-\alpha-2}}, \ \ \ z\in \Omega.
$$
 We claim that each $p^j$ is a Picard point of $f$. Suppose on the contrary that $p_j$ is not a Picard point, then there would be a neighborhood $U_j$ of $p_j$ such that $f|_{\Omega\cap U_j}$ omits at least two values, in particular, it is a normal function on $B_j\cap U_j$. Applying Proposition 5.4 to the sequence $\{z^{j(2\mu-1)}\}_{\mu=1}^\infty$, we conclude that $f|_{B_j\cap U_j}$ has non-tangential limit $0$ at $p_j$, which is absurd. Combining with Lemma 5.6, we conclude the proof of the first conclusion.   The second conclusion may be proved similarly by using Proposition 2.3 in place of Proposition 2.2.
\end{proof}

To prove Proposition 1.3, we need a general extension theorem of Ohsawa as follows.

Let $\Omega\subset {\mathbb C}^n$ be a pseudoconvex domain and  let $S$ be a closed complex submanifold of $\Omega$ such that each component of $S$ is a domain in some complex affine subspace of ${\mathbb C}^n$. We denote by $\#(S)$ the set of all\/ {\it negative}\/  $%
\Psi \in PSH(\Omega)$ satisfying the following conditions:

(i) $S\subset \Psi ^{-1}(-\infty )$;

(ii) If $S$ is $k-$dimensional around a point $x$, there exists a local coordinates $(z_1,\cdots,z_n)$ on a neighborhood $U$ of $x$ such that $z_{k+1}=\cdots=z_n=0$ on $S\cap U$ and
\[
\sup_{U\backslash S}\left| \Psi (z)-(n-k)\log \sum_{j=k+1}^n |z_j|^2
\right| <\infty.
\]

Let $dV$ denote the Lebesgue measure in ${\mathbb C}^n$. For each $\Psi \in \#(S)$, one can define a positive measure $%
dV[\Psi ]$ on $S$ as the minimum element of the
partially ordered set of positive measure $d\mu $ satisfying
\[
\int_{S}fd\mu \geq \lim \sup_{t\rightarrow +\infty }\frac{2(n-k)}{%
\sigma _{2n-2k-1}}\int_{\Psi^{-1}((-t-1,-t))}fe^{-\Psi }dV
\]
for any nonnegative continuous function $f$ which is compactly supported in $%
\Omega$. Here $\sigma _m$ denotes the volume of the unit sphere in $\mathbb{R}%
^{m+1}$.

\begin{theorem} [cf. \cite{Ohsawa01}]  Let $\Omega$ be a pseudoconvex domain in $%
\mathbb{C}^n$ and let $S$ be as above. Let $\varphi$ be a psh function on $\Omega$ and $\Psi \in
\#(S)$. Then for any holomorphic function $f$ on $S$ satisfying $%
\int_S |f|^2 e^{-\varphi} dV[\Psi ]<+\infty $, there exists a
holomorphic function $F$ on $\Omega$ such that $F|_{S}=f$
and
\[
\int_\Omega |F|^2e^{-\varphi}dV\leq 2^8\pi \int_{S}|f|^2 e^{-\varphi}dV[\Psi
].
\]
\end{theorem}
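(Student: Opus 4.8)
The plan is to establish Theorem 5.9 along the lines of the Ohsawa--Takegoshi $L^2$ extension scheme: first produce a smooth, generally non-holomorphic, extension of $f$, then correct it by solving a $\bar\partial$-equation with a singular weight adapted to $\Psi$, so that the correction vanishes on $S$ and the resulting holomorphic extension $F$ obeys the required $L^2$ bound. Since the estimate is additive over the connected components of $S$ and the measure $dV[\Psi]$ is defined componentwise through condition (ii), it suffices to work near one component, where $S$ is $k$-dimensional and, in the adapted coordinates $(z_1,\dots,z_n)$ of (ii), is cut out by $z_{k+1}=\cdots=z_n=0$ with $\Psi$ comparable to $(n-k)\log|z'|^2$, $z'=(z_{k+1},\dots,z_n)$.

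First I would fix an exhaustion of $\Omega$ by relatively compact pseudoconvex subdomains and replace $\Psi$ by a decreasing sequence of smooth psh approximants $\Psi_\nu \searrow \Psi$, so that the integrations by parts below are legitimate; the singular statement is recovered at the end by monotone and weak limits. For the initial extension I would take a locally defined holomorphic extension of $f$ that is independent of $z'$, glue by a partition of unity, and multiply by a cutoff $b_t=\chi(-\Psi-t)$ (with $\chi$ as in the proof of Proposition 2.2) which equals $1$ near $S$ and is supported in $\{\Psi<-t\}$. Then $\tilde f_t|_S=f$ and $\bar\partial\tilde f_t$ is supported in the shell $\{-t-1<\Psi<-t\}$, which concentrates on $S$ as $t\to+\infty$.

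Next I would solve $\bar\partial u_t=\bar\partial\tilde f_t$ by the twisted Bochner--Kodaira--Nakano (Donnelly--Fefferman--Ohsawa) method: for positive functions $\eta$ and $\lambda$ of $\Psi$ one has, for $(0,1)$-forms $\alpha$ in the relevant domains,
\begin{equation*}
\int_\Omega (\eta+\lambda)\,|\bar\partial^*\alpha|^2 e^{-\varphi} + \int_\Omega \eta\,|\bar\partial\alpha|^2 e^{-\varphi} \ge \int_\Omega \bigl(\eta\, i\partial\bar\partial\varphi - i\partial\bar\partial\eta - \lambda^{-1}\, i\partial\eta\wedge\bar\partial\eta\bigr)(\alpha,\alpha)\, e^{-\varphi}.
\end{equation*}
Choosing $\eta=\eta(\Psi)$ and $\lambda=\lambda(\Psi)$ so that the bracket dominates a positive multiple of $i\partial\bar\partial(\varphi+\Psi)$ (using pseudoconvexity and $i\partial\bar\partial\varphi\ge 0$) turns the estimate, by duality, into solvability of $\bar\partial u_t=\bar\partial\tilde f_t$ with
\begin{equation*}
\int_\Omega |u_t|^2 e^{-\varphi}\, dV \le C \int_{\{-t-1<\Psi<-t\}} |\tilde f_t|^2\, e^{-\varphi-\Psi}\, dV.
\end{equation*}
Setting $F_t=\tilde f_t-u_t$ gives a holomorphic function with $F_t|_S=f$, the trace being correct precisely because $u_t$ is controlled in a norm involving the blowing-up weight $e^{-\Psi}$, which forces its restriction to $S$ to vanish.

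The last step is the passage to the limit and the identification of the constant. By the very definition of $dV[\Psi]$ as the minimal measure dominating $\tfrac{2(n-k)}{\sigma_{2n-2k-1}}\limsup_{t}\int_{\Psi^{-1}((-t-1,-t))}(\,\cdot\,)\,e^{-\Psi}\,dV$, the right-hand side above converges to a multiple of $\int_S |f|^2 e^{-\varphi}\,dV[\Psi]$ as $t\to+\infty$, and optimizing $\eta,\lambda$ produces the explicit factor $2^8\pi$. A normal-families argument then extracts a holomorphic limit $F=\lim F_t$ with $F|_S=f$ and $\int_\Omega |F|^2 e^{-\varphi}\,dV \le 2^8\pi \int_S |f|^2 e^{-\varphi}\,dV[\Psi]$, and removing the approximants $\Psi_\nu$ and the exhaustion finishes the proof. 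I expect the main obstacle to be twofold: selecting $\eta$ and $\lambda$ so that the curvature bracket stays nonnegative while still delivering the sharp constant, and controlling the singular weight $\Psi$ through the regularization so that the shell integrals converge \emph{exactly} to the residue measure $dV[\Psi]$ rather than to a strictly larger quantity.
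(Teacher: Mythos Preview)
The paper does not prove this theorem at all: it is quoted verbatim from Ohsawa \cite{Ohsawa01} as a black box (note the ``cf.\ \cite{Ohsawa01}'' in the theorem header and the sentence preceding it, ``we need a general extension theorem of Ohsawa as follows''), and is then applied in the proof of Proposition~1.3. So there is no proof in the paper to compare your proposal against.

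That said, your outline is a faithful summary of the Ohsawa--Takegoshi scheme and of Ohsawa's argument in \cite{Ohsawa01}: smooth extension cut off by $\chi(-\Psi-t)$, correction via a $\bar\partial$-equation solved with the twisted Bochner--Kodaira--Nakano/Donnelly--Fefferman inequality with auxiliary functions $\eta(\Psi),\lambda(\Psi)$, identification of the shell integrals with the residue measure $dV[\Psi]$, and a normal-families limit. The two points you flag as obstacles---the choice of $\eta,\lambda$ yielding the constant $2^8\pi$, and the convergence of the shell integrals to $dV[\Psi]$ under regularization---are exactly where the work in Ohsawa's paper lies; your sketch correctly identifies them but does not carry them out, so as written it is an outline rather than a proof. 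If your goal is only to reproduce what the present paper does, you may simply cite \cite{Ohsawa01}.
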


\begin{proof}[Proof of Proposition 1.3] Let $\Omega_2$ denote the intersection of $\Omega$ with a $2-$dimensional complex affine subspace intersecting $\partial \Omega$ transversally at $p$ (note that $\Omega$ admits an inner ball at $p$). By virtue of the Ohsawa-Takegoshi extension theorem \cite{OhsawaTakegoshi87}, each function in $A^2_\alpha(\Omega_2)$ with $\alpha\ge 0$ extends to a function in $A^2_\alpha(\Omega)$. Thus it suffices to consider the case when $n=2$. Without loss of generality, we may assume that $p=0$ and the inner ball at $0$ is $\{(z_1,z_2)\in {\mathbb C}^2:|z_1-1|^2+|z_2|^2<1\}$. Put
 $$
 L_1=\{(z_1,z_2)\in {\mathbb C}^2:z_2=0\},\ \ \  L_2=\{(z_1,z_2)\in {\mathbb C}^2:z_2=z_1\},
 $$
and $S:=S_1\cup S_2$, where $S_k=\Omega\cap L_k$ for $k=1,2$. Let $f$ be a holomorphic function on $S$ given by $f=1$ on $S_1$ and $f=0$ on $S_2$. Let $d$ denote the diameter of $\Omega$. With respect to the functions
$$
\varphi(z):=\alpha \log 1/\delta_\Omega(z),\ \ \ \ \ \Psi(z):=\log |z_2|^2+\log |z_2-z_1|^2-2\log (2 d^2)\in \#(S),
$$
we have the estimate
$$
\int_S |f|^2 e^{-\varphi} dV[\Psi]=\int_{S_1} e^{-\varphi} dV[\Psi]=4 d^4\int_{S_1} |z_1|^{-2}\delta_\Omega^{\alpha}\le 4d^4 \int_{\{z_1:0<|z_1|<d\}} |z_1|^{-2+\alpha}<\infty.
$$
 Thus by the previous extension theorem, there is a holomorphic extension $F$ of $f$ to $\Omega$ such that
$$
\int_\Omega |F|^2e^{-\varphi}dV<\infty,
$$
i.e., $F\in A^2_\alpha(\Omega)$. Now $F=1$ along the real line $l_1:=\{(x,0)\in {\mathbb C}^2:x\in {\mathbb R}\}$ and $F=0$ along the real line $\{(x,x)\in {\mathbb C}^2:x\in {\mathbb R}\}$, we conclude that $0$ is a Picard point of $F$, in view of Proposition 5.3.

For the second assertion, we put $\varphi=0$ and
$$
\Psi(z):=-\frac4{\beta_1}\log(-\psi)+\log |z_2|^2+\log |z_2-z_1|^2-\frac4{\beta_1}\log C_1-\log 4.
$$
Since $-\psi(z)\ge C_1|z|^{\beta_1}$, we conclude that $\Psi\in \#(S)$. For the function $f$ defined as above, we have
$$
\int_S |f|^2 dV[\Psi]=\int_{S_1} dV[\Psi]=C\int_{S_1} |z_1|^{-2}(-\psi)^{4/\beta_1}\le C \int_{\{z_1:0<|z_1|<d\}} |z_1|^{-2+4\beta_2/\beta_1}<\infty
$$
since $-\psi(z)\le C_2 |z|^{\beta_2}$. Applying the extension theorem again, we see that $f$ admits a holomorphic extension $F\in A^2(\Omega)$. By a similar argument as above, we conclude that $0$ is a Picard point of $F$.
\end{proof}

\section{Holomorphic maps onto ${\mathbb C}^n$}

We begin with the following

\begin{proposition} An irreducible complex space admits a holomorphic map onto ${\mathbb C}$ if and only if it admits a nonconstant holomorphic function.
\end{proposition}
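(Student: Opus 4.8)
The plan is to reduce the ``if'' direction to a one-variable construction, exactly as in the proof of Proposition 3.1, and then to upgrade a near-surjection to a genuine surjection by a one-variable trick. The ``only if'' direction is immediate, since a holomorphic map of an irreducible complex space onto $\mathbb{C}$ is by definition a nonconstant holomorphic function.

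For the ``if'' direction, suppose $M$ is irreducible and carries a nonconstant holomorphic function $\phi$. As observed in the proof of Proposition 3.1, $\phi$ is an open map, so $\Omega:=\phi(M)$ is a connected open subset of $\mathbb{C}$, i.e.\ a domain, and $\phi:M\to\Omega$ is surjective. It therefore suffices to produce a holomorphic surjection $g:\Omega\to\mathbb{C}$, for then $g\circ\phi$ is the desired holomorphic map of $M$ onto $\mathbb{C}$.

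To construct $g$, I would first note that $\Omega\subset\mathbb{C}$ is automatically pseudoconvex, so Theorem 1.2 supplies a holomorphic function $f$ on $\Omega$ for which every boundary point (and $\infty$, if $\Omega$ is unbounded) is a Picard point. Fixing any such point $p$, the function $f$ assumes in $\Omega$ every complex value with at most one exception $a$, so that $f(\Omega)\supseteq\mathbb{C}\setminus\{a\}$. If $f$ happens to be surjective, take $g=f$. Otherwise $a\notin f(\Omega)$, whence $f(\Omega)=\mathbb{C}\setminus\{a\}$, and I would compose $f$ with the Joukowski-type map $h(w)=(w-a)+(w-a)^{-1}$, which is holomorphic on $\mathbb{C}\setminus\{a\}$. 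For any $c\in\mathbb{C}$ the equation $h(w)=c$ becomes $u^2-cu+1=0$ in the variable $u:=w-a$, a quadratic whose two roots have product $1$ and hence are nonzero; thus $h$ maps $\mathbb{C}\setminus\{a\}$ onto $\mathbb{C}$, and $g:=h\circ f$ is the required surjection.

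The only genuine obstacle is the passage from ``dense image'' (or ``all values but one'') to honest surjectivity: the Casorati--Weierstrass information of Theorem 1.1 is too weak here, which is why I would invoke the Picard points of Theorem 1.2 to pin down the image up to a single omitted value, and then remove that last ambiguity with the explicit surjection $\mathbb{C}^\ast\to\mathbb{C}$, $u\mapsto u+u^{-1}$. The remaining points---that $\Omega$ is connected because $M$ is, and that $g\circ\phi$ is holomorphic and onto---are routine.
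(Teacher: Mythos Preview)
Your argument is correct. Both your proof and the paper's reduce to the one-variable problem of mapping an open set $\Omega\subset\mathbb{C}$ onto $\mathbb{C}$, and both finish with the same Joukowski surjection $\mathbb{C}^\ast\to\mathbb{C}$, $u\mapsto u+u^{-1}$. The difference lies in how the intermediate map onto $\mathbb{C}^\ast$ (equivalently, onto $\mathbb{C}\setminus\{a\}$) is produced. The paper invokes Corollary~5.2, which is an elementary construction: one locates a boundary point of $\Omega$ admitting an inner cone and uses the explicit function $e^{1/z^2}$ together with the classical one-variable Lindel\"of principle (Proposition~5.1). You instead appeal to Theorem~1.2, whose proof passes through H\"ormander's $L^2$ estimates, Whitney decompositions, and the Kobayashi-distance machinery of Lemma~5.5 and Proposition~5.4. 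Your route is logically sound but considerably heavier; the paper's is more economical and entirely classical. A small compensating advantage of your version is that it handles the case $\Omega=\mathbb{C}$ uniformly (with $\infty$ as the Picard point), whereas the paper's appeal to Corollary~5.2 tacitly assumes $\partial\Omega\neq\emptyset$---though this gap is harmless, since when $\phi(M)=\mathbb{C}$ the map $\phi$ itself is already onto.
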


\begin{proof} It suffices to verify the if part. Let $h$ be a nonconstant holomorphic function on an irreducible complex space $M$. Then the image $h(M)$ is an open set in ${\mathbb C}$. Thanks to Corollary 5.2, there is a surjective holomorphic map $g:h(M)\rightarrow {\mathbb C}^\ast$. On the other side, the holomorphic function $f(z)=z+\frac1z$ maps ${\mathbb C}^\ast$ onto ${\mathbb C}$. Clearly, $f\circ g\circ h$ defines a surjective holomorphic map from $M$ to ${\mathbb C}$.
\end{proof}

In order to study locally biholomorphic maps, we need the following

\begin{lemma} There is a surjective holomorphic map $f:{\mathbb C}^\ast \rightarrow {\mathbb C}$ which is locally biholomorphic.
\end{lemma}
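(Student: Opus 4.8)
The plan is to exhibit one explicit function rather than argue abstractly. I would take
\[
f(z)=\int_1^z e^{1/w^2}\,dw,
\]
the integral along any path in $\mathbb{C}^*$. First I would verify that this is well defined and locally biholomorphic. The integrand $e^{1/w^2}$ is holomorphic and nowhere vanishing on $\mathbb{C}^*$, and its Laurent expansion $\sum_{k\ge0} w^{-2k}/k!$ contains only even powers; in particular the coefficient of $w^{-1}$ vanishes, so $\mathrm{Res}_0\,e^{1/w^2}=0$ and $\oint e^{1/w^2}\,dw=0$ over any loop. Hence $f$ is single valued on $\mathbb{C}^*$, and since $f'(z)=e^{1/z^2}\neq0$ everywhere, $f$ is locally biholomorphic. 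The choice of $1/w^2$ rather than $1/w$ is precisely what forces the residue, hence the periods, to vanish.

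The remaining, and main, task is surjectivity. I would first analyze $f$ near $\infty$: writing $f(z)=(z-1)+\int_1^z\bigl(e^{1/w^2}-1\bigr)\,dw$ and using $e^{1/w^2}-1=O(|w|^{-2})$, one gets $f(z)=z+c_0+O(1/z)$ as $z\to\infty$ for some constant $c_0$. Thus near $\infty$ the map $f$ is a small perturbation of a translation, so it is injective there and its image contains every $w$ of sufficiently large modulus. For the bounded values I would use that $f$ has an essential singularity at $0$ (otherwise $f$, hence $f'=e^{1/z^2}$, would be meromorphic at $0$). By the great Picard theorem $f$ omits at most one value in every punctured neighborhood of $0$; since any globally omitted value is in particular omitted near $0$, the map $f$ can omit at most one value $b\in\mathbb{C}$ in total.

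It remains to rule out this last value, which I expect to be the hard part. Suppose $f$ omits $b$. Then $f-b$ is zero free on $\mathbb{C}^*$, so $f-b=z^{k}e^{\Phi}$ for some $k\in\mathbb{Z}$ and $\Phi\in\mathcal{O}(\mathbb{C}^*)$. Matching with the expansion $f-b\sim z$ at $\infty$ forces $k=1$ and $\Phi$ to be holomorphic at $\infty$ (a nonzero power $z^{1-k}$ would force a logarithmic, hence multivalued, $\Phi$), so near $0$ one has $\Phi(z)=a_0+\sum_{j\ge1}a_j z^{-j}$. Differentiating $f-b=ze^{\Phi}$ and comparing with $f'=e^{1/z^2}$ gives $e^{\Phi}(1+z\Phi')=e^{1/z^2}$. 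Finally I would examine the real part along $z=x\to0^+$: a Laplace-type estimate yields $f(x)\sim-\tfrac12 x^{3}e^{1/x^2}$, whence $\mathrm{Re}\,\Phi(x)=\log\lvert(f(x)-b)/x\rvert\sim x^{-2}+2\log x$. But $\mathrm{Re}\,\Phi(x)=\sum_{j\ge1}\mathrm{Re}(a_j)x^{-j}$ admits no $\log x$ term in its asymptotic expansion as $x\to0^+$, a contradiction. Hence no value is omitted and $f$ is onto. (Equivalently, $b$ is omitted exactly when the winding number of $f$ about $b$ on the circles $|z|=r$ equals $1$ for all small $r$, which the essential singularity at $0$ prevents; the asymptotic computation is the form of this obstruction I would actually carry out.)
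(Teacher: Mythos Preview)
Your construction is sound up to the final step, but the argument ruling out the Picard exceptional value has a real gap. You write $\Phi(z)=\sum_{j\ge0}a_j z^{-j}$ and assert that $\mathrm{Re}\,\Phi(x)$ ``admits no $\log x$ term in its asymptotic expansion as $x\to0^+$.'' This is not justified: $\Phi$ is an \emph{entire} function of $w=1/z$, and a convergent power series $\sum a_j w^j$ can have quite arbitrary behaviour along a ray as $w\to\infty$; there is no sense in which its ``asymptotic expansion'' is the series read off term by term. The gap can be repaired with Hadamard's factorization theorem. The function $g(w)=w\bigl(f(1/w)-b\bigr)$ is entire (this is precisely your statement that $k=1$ and $\Phi$ extends to $\infty$), satisfies the linear ODE $g(w)-wg'(w)=e^{w^{2}}$, and from your asymptotic $f(x)\sim-\tfrac12 x^{3}e^{1/x^{2}}$ one checks that $g$ has order exactly $2$. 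If $g$ were zero-free, Hadamard would give $g=e^{P}$ with $P$ a polynomial of degree at most $2$; but then $\mathrm{Re}\,P(y)=\log|g(y)|=y^{2}-2\log y+O(1)$ along the positive reals, which is impossible for a quadratic polynomial. Hence $g$ has a zero and $b$ is attained. Your parenthetical winding-number remark has the same defect: a constant index equal to $1$ on all circles $|z|=r$ is not by itself contradicted by the presence of an essential singularity at $0$.

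For comparison, the paper bypasses all of this by working first on $\mathbb{C}$ rather than on $\mathbb{C}^{*}$. It quotes the known fact (due to Calabi) that the entire function $F(z)=\int_{0}^{z}e^{-w^{2}}\,dw$ maps $\mathbb{C}$ locally biholomorphically \emph{onto} $\mathbb{C}$. Since $F$ is non-polynomial, $\infty$ is an essential singularity, so some value $a$ has at least two preimages; after an affine change of the source these may be taken to be $0$ and $1$. Then $F|_{\mathbb{C}^{*}}$ is still locally biholomorphic and still onto $\mathbb{C}$, because removing $0$ from the domain can cost at most the value $F(0)=a$, which is recovered at $z=1$. Your route is more self-contained, but buys that at the price of having to prove surjectivity from scratch; the paper's trick of restricting a known surjection is much shorter.
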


\begin{proof} Following E. Calabi (see \cite{StyerMinda74}, p. 640, or \cite{Zalcman74}, p. 135), the following non-polynomial entire function
$$
 f(z)=\int_0^z e^{-w^2} \/ dw.
$$
maps $\mathbb C$ locally biholomorphically onto $\mathbb C$.  Since $\infty$ is an essential singularity of $f$, there exists $a\in \mathbb C$ whose preimage contains at least two points, say $0,1$ for the sake of simplicity. It follows that $f|_{{\mathbb C}^\ast}$ is a desired holomorphic map.
\end{proof}

\begin{proposition} Every noncompact Riemann surface admits a holomorphic map onto $\mathbb C$ which is locally biholomorphic.
\end{proposition}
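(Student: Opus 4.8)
The plan is to reduce the claim about an arbitrary noncompact Riemann surface to the already-solved cases of planar domains by using the structure theory of Riemann surfaces together with the two tools just established: Corollary 5.2 (every open set in $\mathbb{C}$ admits a surjective holomorphic map to $\mathbb{C}^\ast$, hence—composing with the known surjection $\mathbb{C}^\ast\to\mathbb{C}$—onto $\mathbb{C}$) and Lemma 6.2 (the Calabi map gives a locally biholomorphic surjection $\mathbb{C}^\ast\to\mathbb{C}$). The key point I want to exploit is that a noncompact Riemann surface $X$ is a Stein manifold (Behnke--Stein), so it carries plenty of holomorphic functions and, in fact, a holomorphic immersion into $\mathbb{C}^2$; more usefully, every open Riemann surface admits a nonconstant holomorphic function that is an immersion, i.e., a locally biholomorphic map $g\colon X\to\mathbb{C}$.

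\medskip

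First I would invoke this immersion result: since $X$ is open, there exists a holomorphic function $g\colon X\to\mathbb{C}$ with $g'(z)\neq 0$ everywhere (equivalently, $dg$ nowhere vanishing), so $g$ is locally biholomorphic. The image $g(X)$ is then an open connected subset $\Omega\subset\mathbb{C}$. Second, I would produce a locally biholomorphic surjection $h\colon\Omega\to\mathbb{C}$ from the planar domain $\Omega$ onto $\mathbb{C}$. For this I combine the surjection onto $\mathbb{C}^\ast$ furnished by Corollary 5.2 with the Calabi map of Lemma 6.2—but I must be careful that the intermediate surjection $\Omega\to\mathbb{C}^\ast$ be itself locally biholomorphic, since only then will the composite $h=(\text{Calabi})\circ(\Omega\to\mathbb{C}^\ast)$ be locally biholomorphic. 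Finally, the composition $f:=h\circ g\colon X\to\mathbb{C}$ is surjective (both factors are surjective, as $g(X)=\Omega$ and $h(\Omega)=\mathbb{C}$) and locally biholomorphic (a composition of local biholomorphisms), which is exactly the desired map.

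\medskip

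The hard part will be step two: guaranteeing that the surjection from the arbitrary planar domain $\Omega$ onto $\mathbb{C}$ can be taken \emph{locally biholomorphic}. The surjection onto $\mathbb{C}^\ast$ coming out of Corollary 5.2 is built from the Casorati--Weierstrass construction and there is no reason its derivative is nonvanishing. The cleanest route, I expect, is to bypass $\mathbb{C}^\ast$ and instead argue directly that any open $\Omega\subsetneq\mathbb{C}$ admits a locally biholomorphic surjection onto $\mathbb{C}$: one can fix a boundary point $p\in\partial\Omega$ near which $\Omega$ contains a cone with a supporting line (shrinking $\Omega$ to a small sector if necessary), so that Corollary 5.2's companion statement gives a locally biholomorphic $f_1$ on $\mathbb{C}\setminus\{p\}$ whose restriction realizes $p$ as a Picard point; a Picard point forces the image to be all of $\mathbb{C}$ with at most one omitted value, and post-composing with a locally biholomorphic self-map of $\mathbb{C}$ that hits the omitted value (a translated Calabi map, which is locally biholomorphic and surjective) repairs surjectivity while preserving local biholomorphy. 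Assembling these pieces and verifying the nonvanishing of the derivative at each composition is the only genuinely delicate point; the surjectivity and the existence of the initial immersion $g$ are standard.
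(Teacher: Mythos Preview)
Your overall architecture is exactly the paper's: invoke the Gunning--Narasimhan immersion theorem to get a locally biholomorphic $g:X\to\mathbb{C}$, then (when $g(X)\neq\mathbb{C}$) use Corollary~5.2 at a boundary point of $g(X)$ admitting an inner ball to get a locally biholomorphic map onto $\mathbb{C}^\ast$, and finish with Lemma~6.2.

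However, your middle paragraph contains a misreading that creates a phantom difficulty. You write that the surjection from Corollary~5.2 ``is built from the Casorati--Weierstrass construction and there is no reason its derivative is nonvanishing.'' This is not so: Corollary~5.2 is proved from the explicit function $e^{1/z^2}$ via the Lindel\"of principle (Proposition~5.1), not from the interpolation construction of Theorem~1.1, and its statement explicitly includes $f'(z)\neq 0$ for all $z\in\mathbb{C}\setminus\{p\}$. So the map you need is already locally biholomorphic; the ``alternative route'' you sketch in your last paragraph is not an alternative at all but simply Corollary~5.2 itself. Once you translate by the (at most one) omitted value, you land surjectively in $\mathbb{C}^\ast$ with nonvanishing derivative, and Lemma~6.2 finishes. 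There is also no need to ``shrink $\Omega$ to a small sector'': the function from Corollary~5.2 lives on all of $\mathbb{C}\setminus\{p\}\supset\Omega$, and the Picard-point behaviour inside the cone already forces the image of $\Omega$ to miss at most one value.
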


\begin{proof} Let $M$ be a noncompact Riemann surface. By virtue of Gunning and Narasimhan's theorem (cf. \cite{GunningNarasimhan67}), there is a function $h\in {\mathcal O}(M)$ without critical points. Its image $h(M)$ is a domain in ${\mathbb C}$. We only need to consider the case $h(M)\neq {\mathbb C}$. Since $h(M)$ has at least one boundary point which admits an inner ball, we infer from Corollary 5.2 that there is a surjective holomorphic map $f:h(M)\rightarrow {\mathbb C}^\ast$ without critical points. Thus $f\circ h$ defines a holomorphic map from $M$ onto ${\mathbb C}^\ast$, which is locally biholomorphic. The assertion follows immediately from the previous lemma. \end{proof}

Proposition 6.3 extends (with the same proof) to any Stein manifold $M$, in the form that there is a holomorphic submersion $f:M\rightarrow {\mathbb C}$ onto ${\mathbb C}$ (cf. \cite{ForstnericActa}).

\begin{corollary} Every Riemann surface admits a holomorphic map onto ${\mathbb P}^1$.
\end{corollary}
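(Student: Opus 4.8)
The plan is to treat the compact and noncompact cases separately, since the two rely on entirely different inputs, and then to assemble everything with one explicit auxiliary map.

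First I would dispose of the compact case. If $M$ is a compact Riemann surface, then by the classical existence theorem for meromorphic functions on compact Riemann surfaces (a consequence of the Riemann--Roch theorem) there is a nonconstant meromorphic function $f$ on $M$, which we regard as a holomorphic map $f:M\rightarrow {\mathbb P}^1$. Since $f$ is nonconstant it is an open map, so $f(M)$ is open in ${\mathbb P}^1$; since $M$ is compact and $f$ continuous, $f(M)$ is also closed. Because ${\mathbb P}^1$ is connected, $f(M)={\mathbb P}^1$, and $f$ is the required surjection. This step imports its whole content from the classical theory and needs no further argument.

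Next I would handle the noncompact case by reducing to Proposition 6.3 together with a single surjection ${\mathbb C}\rightarrow {\mathbb P}^1$. If $M$ is noncompact, Proposition 6.3 produces a holomorphic map $g:M\rightarrow {\mathbb C}$ with $g(M)={\mathbb C}$, so it suffices to exhibit a surjective holomorphic map $R:{\mathbb C}\rightarrow {\mathbb P}^1$; then $R\circ g:M\rightarrow {\mathbb P}^1$ is onto. I would take $R(z)=z+\tfrac1z=(z^2+1)/z$, the same rational function used in the proof of Proposition 6.1, but now viewed as a map \emph{into} ${\mathbb P}^1$ rather than into ${\mathbb C}$. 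At $z=0$ it has a simple pole, hence extends holomorphically across $0$ with $R(0)=\infty$.

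The only point requiring care is the surjectivity of $R:{\mathbb C}\rightarrow {\mathbb P}^1$, and it is elementary: the value $\infty$ is attained at $z=0\in{\mathbb C}$, while for any finite $w$ the equation $z+\tfrac1z=w$ rewrites as $z^2-wz+1=0$, whose two roots have product $1$ and are therefore finite and nonzero. Thus every point of ${\mathbb P}^1$ is attained on ${\mathbb C}$. I do not expect a genuine obstacle anywhere: the substantive input is the existence of nonconstant meromorphic functions in the compact case and Proposition 6.3 in the noncompact case, and the corollary then follows simply by combining these with the explicit surjection $R$. (Note that $R\circ g$ need not be locally biholomorphic, but the corollary only asserts surjectivity, so this is immaterial.)
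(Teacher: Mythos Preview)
Your proof is correct. The compact case is handled the same way in the paper (dismissed as well-known). For the noncompact case, both you and the paper invoke Proposition 6.3 to obtain a surjection $M\to{\mathbb C}$, but you then differ in how you pass from ${\mathbb C}$ onto ${\mathbb P}^1$: the paper composes the universal covering map ${\mathbb C}\to T$ onto a complex torus with a surjective holomorphic map $T\to{\mathbb P}^1$ (the classical branched double cover given by the Weierstrass $\wp$-function), whereas you use the single rational map $R(z)=z+\tfrac1z$, read as a holomorphic map ${\mathbb C}\to{\mathbb P}^1$ with $R(0)=\infty$. Your route is more elementary---it avoids any appeal to elliptic functions and reuses exactly the map already introduced in the proof of Proposition 6.1---while the paper's route stays within the circle of ideas surrounding universally dominated Riemann surfaces (Example 4.1). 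Neither approach yields a locally biholomorphic map onto ${\mathbb P}^1$, so nothing is lost by taking the shorter path; your parenthetical remark to this effect is on point.
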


\begin{proof} It is well-known that each compact Riemann surface admits a holomorphic map onto ${\mathbb P}^1$. Thus we only need to consider noncompact Riemann surfaces. The desired holomorphic map may be obtained by  composing a holomorphic map onto ${\mathbb C}$, the universal covering map from ${\mathbb C}$ to a torus, and a surjective holomorphic map from this torus to ${\mathbb P}^1$.
\end{proof}

\begin{proposition} Let $\Omega$ be a domain in ${\mathbb C}^n$. Suppose there is a point $p\in \partial \Omega$ such that $\Omega$ contains a cone $\Lambda_p$ with vertex at $p$, and there is a supporting complex hyperplane of $\Omega$ at $p$. Then there is a holomorphic map from $\Omega$ onto ${\mathbb C}^n$ which is locally biholomorphic.
\end{proposition}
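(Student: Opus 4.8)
The plan is to normalize the geometry by a complex affine change of coordinates, to unfold the cone into a product via the biholomorphism $z\mapsto(z_1,z_2/z_1,\dots,z_n/z_1)$, and then to write down an explicit triangular map built from the essential singularity $e^{1/z_1^2}$ together with the surjective, locally biholomorphic function $g:{\mathbb C}^\ast\to{\mathbb C}$ of Lemma 6.2.

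First I would set up coordinates. Let $\ell$ be a complex-linear functional defining the supporting hyperplane, normalized so that $\ell(p)=0$ and $\Omega\cap\{\ell=0\}=\emptyset$. After an affine change assume $p=0$ and $\ell(z)=z_1$, so $z_1\neq0$ on $\Omega$ and $z_1\to0$ as $z\to p$ in $\Omega$. Then $\Theta(z)=(z_1,z_2/z_1,\dots,z_n/z_1)$ is a biholomorphism of $\{z_1\neq0\}$ onto ${\mathbb C}^\ast\times{\mathbb C}^{n-1}$ and is holomorphic on all of $\Omega$. Since $\Lambda_p$ is an open cone with vertex $0$ contained in $\{z_1\neq0\}$, its image $\Theta(\Lambda_p)$ is open and invariant under positive real scaling of the first coordinate; taking a polydisc neighborhood of one of its points and scaling shows that $\Theta(\Lambda_p)$ contains a set $\Sigma\times P$, where $\Sigma\subset{\mathbb C}^\ast$ is an open sector with vertex $0$ and $P=\prod_{j\ge2}\{|s_j-a_j|<\rho\}$. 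A shear $z_j\mapsto z_j-a_jz_1$ ($j\ge2$), which preserves both the hyperplane and the cone, lets me take $P$ centered at $0$, and a rotation $z_1\mapsto e^{i\gamma}z_1$ lets me arrange that $\Sigma$ contains a neighborhood near $0$ of the ray $\arg w=\pi/4$.

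Next I would construct the model map $\Phi:{\mathbb C}^\ast\times{\mathbb C}^{n-1}\to{\mathbb C}^n$,
$$
\Phi(w,s_2,\dots,s_n)=\Big(g\big(e^{1/w^2}\big),\ s_2\,e^{i/w^2},\ \dots,\ s_n\,e^{i/w^2}\Big).
$$
Its Jacobian is lower triangular with diagonal entries $g'(e^{1/w^2})e^{1/w^2}(-2/w^3)$ and $e^{i/w^2}$ (for $j\ge2$), all nonvanishing, so $\Phi$ is locally biholomorphic; hence $F:=\Phi\circ\Theta$ is holomorphic and locally biholomorphic on $\Omega$. It then remains only to prove $\Phi|_{\Sigma\times P}$ is onto ${\mathbb C}^n$, for then $F(\Omega)\supseteq F(\Lambda_p)=\Phi(\Sigma\times P)={\mathbb C}^n$. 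Given a target $(\xi_1,\dots,\xi_n)$, pick $\eta\in g^{-1}(\xi_1)\subset{\mathbb C}^\ast$ and solve $e^{1/w^2}=\eta$, i.e. $1/w_k^2=\log\eta+2\pi ik$. As $k\to-\infty$ these roots tend to $0$ along $\arg w=\pi/4$, hence lie in $\Sigma$ eventually, while the direct computation $\operatorname{Re}(i/w_k^2)=-\arg\eta-2\pi k\to+\infty$ gives $|e^{i/w_k^2}|\to\infty$. Choosing $k$ negative enough that $|e^{i/w_k^2}|>\rho^{-1}\max_{j\ge2}|\xi_j|$ and setting $s_j=\xi_j\,e^{-i/w_k^2}$, one obtains $s\in P$ and $\Phi(w_k,s)=(\xi_1,\dots,\xi_n)$.

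The main obstacle is precisely this simultaneous-surjectivity step. Inside the cone the slopes $s_j=z_j/z_1$ are confined to the bounded polydisc $P$, so no function holomorphic on $\Omega$ and depending on a single $s_j$ can by itself cover ${\mathbb C}$; the image of a bounded disc under an entire map is bounded. The construction circumvents this by coupling all higher components to the first variable through the essential singularity at $z_1=0$: the prescribed value $\xi_1$ is attained by the first component at infinitely many $w_k\to0$, and along this sequence the common factor $e^{i/w_k^2}$ blows up, supplying the unbounded magnification that spreads the bounded polydisc of slopes over all of ${\mathbb C}^{n-1}$. The two points needing care are that these roots genuinely fall inside the prescribed sector $\Sigma$ (secured by the rotation) and that the magnification is \emph{simultaneous} in all $n-1$ coordinates (secured by using the single exponent $i/w^2$ throughout).
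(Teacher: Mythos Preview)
Your argument is correct and follows essentially the same strategy as the paper's proof: normalize so that the supporting hyperplane is a coordinate hyperplane, build an essential singularity in that coordinate via $e^{1/z^2}$, and exploit the fact that near the vertex the transverse coordinates are small compared with the hyperplane coordinate to spread the image over ${\mathbb C}^{n-1}$. The differences are purely in implementation. The paper places the hyperplane at $\{z_n=0\}$, normalizes so that the cone \emph{axis} lies in the complex line $L=\{z_1=\cdots=z_{n-1}=0\}$ (avoiding your shear), and takes the simpler map
\[
F(z)=\bigl(z_1/z_n^{2},\ \dots,\ z_{n-1}/z_n^{2},\ h(z_n)\bigr),
\]
where $h$ is the function of Corollary~5.2. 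Here the blow-up factor is the polynomial $1/z_n^{2}$ rather than your exponential $e^{i/w^{2}}$: since points of the cone satisfy $|z_k|\lesssim |z_n|$, one can hit any $\zeta_k$ by taking $z_k^\ast=\zeta_k(z_n^\ast)^2$ with $z_n^\ast$ small. The paper then obtains $F(\Omega)\supset{\mathbb C}^{n-1}\times{\mathbb C}^\ast$ and composes with Lemma~6.2 in the last coordinate only at the very end, whereas you fold $g$ into the first component from the start. Your route is a bit more elaborate (the factor $e^{i/w^2}$ requires the extra asymptotic computation of $\mathrm{Re}(i/w_k^2)$), but it has the minor advantage of producing a map directly onto ${\mathbb C}^n$ in one stroke; the paper's version is shorter because the polynomial blow-up needs no such analysis.
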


\begin{proof} After a change of (global) coordinate by a complex affine transformation, we may assume that $p=0$, $\Omega\subset \left\{z\in {\mathbb C}^n: z_n\neq0\right\}$, and the axis of $\Lambda_p$ is contained in the complex line $L:=\{z_1=\cdots=z_{n-1}=0\}$. Thanks to Corollary 5.2, we have a holomorphic function $h(z_n)$ on ${\mathbb C}_{z_n}^\ast$ such that $h'\neq 0$ everywhere, and $0$ is Picard point for $h$ on the planar domain
$\Lambda_p\cap L$.
Now we define a holomorphic map $F=(f_1,\cdots,f_n):\Omega\rightarrow {\mathbb C}^n$ as follows
$$
f_n(z)=h(z_n),\ \ \  f_k(z)=z_k/z_n^2,\ 1\le k\le n-1.
$$
Let $\zeta=(\zeta_1,\cdots,\zeta_n)\in ({\mathbb C}^\ast)^n$ be arbitrarily fixed.
By virtue of Corollary 5.2, we have a solution $z_n^\ast$ to the equation $h(z_n)=\zeta_n$ which can be arbitrarily close to $z_n=0$ inside the planar domain $\Lambda_p'\cap L$ where $\Lambda_p'\subset \Lambda_p$ is a smaller cone with the same axis.
Put $z_k^\ast=\zeta_k (z_n^\ast)^2$ for each $1\le k\le n-1$, and $z^\ast=(z_1^\ast,\cdots,z_n^\ast)$. Clearly, we have $F(z^\ast)=\zeta$ and $z^\ast\in \Lambda_p$
provided $|z_n^\ast|$ sufficiently small.  Thus $F$  maps $\Omega$ onto $({\mathbb C}^\ast)^n$. It is trivial to see that $F$ is locally biholomorphic. Combining this with Lemma 6.2, we conclude the proof.
\end{proof}

\begin{proposition} A domain $\Omega\subset {\mathbb C}^n$ admits a holomorphic map onto ${\mathbb C}^n$ which is locally biholomorphic if $\Omega$ belongs to one of the following domains:
\begin{enumerate}

 \item bounded domains with Lipschitz boundaries.

 \item convex domains.

 \item bounded homogeneous domains.

 \item model domains defined by
$$
\{(z,w)\in {\mathbb C}^n\times {\mathbb C}: {\rm Im\,}w>\psi(z)\},
$$
where $\psi\ge 0$ is continuous function on ${\mathbb C}^n$ so that $\psi(z)=O(|z|)$ holds near $z=0$.

\item Products $\Omega_1\times \Omega_2$, where $\Omega_1$ is a domain in ${\mathbb C}$ and $\Omega_2$ is a domain in ${\mathbb C}^{n-1}$.
\end{enumerate}
\end{proposition}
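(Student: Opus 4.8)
The plan is to verify each of the five listed classes by reducing it, via a change of coordinates, to the hypotheses of Proposition 6.5: namely that $\Omega$ contains a cone $\Lambda_p$ with vertex at some boundary point $p$, and that there is a supporting complex hyperplane of $\Omega$ at $p$. Once both conditions are met at a single boundary point, Proposition 6.5 produces the desired locally biholomorphic map onto $\mathbb{C}^n$, so the entire proof amounts to a sequence of elementary geometric verifications.

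For the convex case (2), I would pick any boundary point $p$; convexity immediately supplies a real supporting hyperplane, and the complex hyperplane spanned by its complexification is a supporting complex hyperplane, while convexity of $\Omega$ together with an interior point lets me fit a (truncated) cone with vertex at $p$ inside $\Omega$. For Lipschitz boundaries (1), the Lipschitz graph condition guarantees at each boundary point a genuine (real, hence a fortiori the complex-linear span gives a supporting complex hyperplane after choosing a good direction) interior cone $\Lambda_p$; the subtlety is arranging a \emph{complex} supporting hyperplane, which I expect to handle by choosing $p$ and coordinates so that the Lipschitz normal direction is complex-linear, or by passing to the convex/model reductions below. For the model domains (4), the point $p=(0,0)$ is a boundary point since $\psi(0)=0$; the hypothesis $\psi(z)=O(|z|)$ near $0$ means the boundary is Lipschitz there, giving an interior cone $\Lambda_p$ opening in the $+\operatorname{Im}w$ direction, while the hyperplane $\{w=0\}$ is a supporting complex hyperplane at $p$ because $\operatorname{Im}w>\psi(z)\ge 0$ forces $w\ne 0$ on $\Omega$. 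For bounded homogeneous domains (3), I would invoke that every bounded homogeneous domain is biholomorphic to a Siegel domain of the second kind, which is a (generalized) model domain of the type in (4), so case (3) reduces to case (4); alternatively bounded homogeneous domains have real-analytic boundary and one can produce the cone and complex support directly.

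The product case (5) is different in flavor and I would treat it separately rather than through Proposition 6.5. Here $\Omega_1\subset\mathbb{C}$ is a planar domain, so by Corollary 5.2 (applied at any boundary point of $\Omega_1$ admitting an inner ball, which always exists for a planar domain) I get a surjective locally biholomorphic map $g_1:\Omega_1\to\mathbb{C}^\ast$ and then compose with the Calabi map of Lemma 6.2 to obtain a surjective locally biholomorphic $\Omega_1\to\mathbb{C}$; for the factor $\Omega_2\subset\mathbb{C}^{n-1}$ I would want a locally biholomorphic surjection onto $\mathbb{C}^{n-1}$ and take the Cartesian product, but since an \emph{arbitrary} domain in $\mathbb{C}^{n-1}$ is not known to admit such a map, the honest argument must instead use only the planar factor: the map $(z,w)\mapsto(\,\text{(locally biholo onto }\mathbb{C}\text{ in the }z\text{-variable)},\,\text{a locally biholomorphic embedding of }\Omega_2\text{ adjusted by the first coordinate})$ does not obviously surject, so this is the step I expect to be the main obstacle. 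The likely resolution is to exploit the $\mathbb{C}^\ast$-valued map on $\Omega_1$ together with a coordinate rescaling in the $\Omega_2$ factor, mimicking the trick $f_k(z)=z_k/z_n^2$ from the proof of Proposition 6.5: using that the first-coordinate map already hits every value in $\mathbb{C}^\ast$ arbitrarily close to a boundary point, one can rescale the remaining coordinates to cover all of $\mathbb{C}^{n-1}$ while staying inside $\Omega_1\times\Omega_2$. Making this rescaling land inside $\Omega_2$ for \emph{every} target value, using only that $\Omega_2$ is open and nonempty, is the delicate point and where I would spend most of the effort.
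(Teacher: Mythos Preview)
Your overall strategy—reducing cases (1)–(4) to Proposition 6.5 and handling (5) via the rescaling trick from that proof—is exactly the paper's. Two of your reductions, however, are not yet nailed down.

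For case (1) you correctly flag that a supporting \emph{complex} hyperplane is the issue, but your suggested fixes do not work: a Lipschitz boundary need not admit a supporting complex hyperplane at a generic point, and there is no mechanism to force the Lipschitz cone axis to be complex-linear. The paper's device is to choose $p\in\partial\Omega$ at maximal distance from the origin; then $\overline{\Omega}\subset\overline{B(0,|p|)}$, so the complex tangent hyperplane to that sphere at $p$ supports $\Omega$, while the Lipschitz hypothesis supplies the interior cone at $p$.

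For case (3), Siegel domains of the second kind are \emph{not} in general model domains of the form in (4), so your reduction to (4) does not go through; the paper instead uses that every Siegel domain of the second kind is convex and reduces to (2). Your fallback claim that bounded homogeneous domains have real-analytic boundary is also false (consider the polydisc).

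For case (5) you have essentially anticipated the paper's argument, and it is less delicate than you fear. After translating so that $0\in\Omega_2$, take $p_1\in\partial\Omega_1$ with an inner ball and $f_1$ from Corollary 5.2; the map $(z_1,z')\mapsto\bigl(f_1(z_1),\, z'/(z_1-p_1)^{2}\bigr)$ is locally biholomorphic, and for any target $(\zeta_1,\zeta')\in\mathbb{C}^\ast\times\mathbb{C}^{n-1}$ one solves $f_1(z_1)=\zeta_1$ with $z_1$ arbitrarily close to $p_1$ (Picard point), which forces $z'=\zeta'(z_1-p_1)^2$ to lie as close to $0\in\Omega_2$ as one wishes. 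Then compose with Lemma 6.2 in the first coordinate. The case $\Omega_1=\mathbb{C}$ is handled by first passing through the covering $\mathbb{C}\to\mathbb{C}^\ast$.
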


\begin{proof} In case $\Omega$ is a bounded domain with Lipschitz boundary,  we simply take the boundary point which is farthest from the origin so that the condition of the previous proposition is verified. Case $(2)$ and $(4)$ follow immediately from the previous proposition. Since each bounded homogeneous domain is biholomorphically equivalent to a Siegel domain of the second kind (cf. \cite{VinbergGindikin}), which is a convex (unbounded) domain, thus it admits a locally biholomorphic map onto ${\mathbb C}^n$. For the last case, we may assume that $\Omega_1\subset {\mathbb C}^\ast$ (note that the universal covering map $\pi:{\mathbb C}\rightarrow {\mathbb C}^\ast$ is surjective and locally biholomorphic). Take a point $p_1\in \partial \Omega_1$ so that $\Omega_1$ admits an inner ball at $p_1$. Without loss of generality, we may assume that the origin $0'\in \Omega_2$. Let $f_1$ be the holomorphic function given in Corollary 5.2. By an argument similar to the proof of the pervious proposition, we may take the desired holomorphic map from $\Omega$ onto ${\mathbb C}^\ast \times {\mathbb C}^{n-1}$ by
$$
(z_1,z_2,\cdots,z_n)\mapsto (f_1(z_1),z_2/(z_1-p_1)^2,\cdots,z_n/(z_1-p_1)^2).
$$
Combining this with Lemma 6.2, we conclude the proof.
\end{proof}

From the viewpoint of algebraic geometry, the most important noncompact complex manifolds are Zariski open sets in a projective algebraic manifold. Let $S=\bigcup_{j=1}^N S_j$ be an analytic hypersurface in some projective algebraic manifold $M$ with $S_j$ being irreducible. $S$ is said to be \emph{quasi-ample} if there exist positive integers $b_1,\cdots, b_N$, such that the effective Weil divisor $S_b=\sum_{j=1}^N b_jS_j$ is ample.

\begin{proposition} Let $M$ be a projective algebraic manifold of dimension $n$ and $S$ be a quasi-ample analytic hypersurface in $M$. Then $M\backslash S$ admits a nondegenerate holomorphic map onto ${\mathbb C}^n$.
\end{proposition}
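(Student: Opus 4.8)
The plan is to exploit the algebraicity of the situation and reduce the statement to Noether normalization. First I would note that, by Chow's theorem, the analytic hypersurface $S=\bigcup_j S_j$ is algebraic, and that $\mathrm{supp}(S_b)=S$ since all $b_j>0$. The hypothesis that $S_b=\sum_j b_j S_j$ is ample then lets me invoke the classical principle that the complement of the support of an ample divisor in a projective variety is affine. Concretely, I would pick $m\gg 0$ so that $mS_b$ is very ample; the associated Kodaira embedding realizes $M$ as a smooth closed subvariety of some $\mathbb{P}^r$ with $\mathcal{O}_{\mathbb{P}^r}(1)|_M=\mathcal{O}_M(mS_b)$, and the $m$-th power of the canonical section cutting out $S_b$ corresponds to a hyperplane $H\subset\mathbb{P}^r$ with $M\cap H=S$. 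Identifying $\mathbb{P}^r\setminus H$ with $\mathbb{C}^r$, I obtain that $X:=M\setminus S=M\cap(\mathbb{P}^r\setminus H)$ is a smooth, closed, irreducible subvariety of $\mathbb{C}^r$ of dimension $n$; in particular $X$ is affine.

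Next I would apply Noether normalization to the affine coordinate ring $A:=\mathcal{O}(X)$, a finitely generated integral $\mathbb{C}$-algebra of Krull dimension $n$. This produces algebraically independent elements $y_1,\dots,y_n\in A$ such that $A$ is a finite module over $B:=\mathbb{C}[y_1,\dots,y_n]$. Geometrically, the inclusion $B\hookrightarrow A$ is the comorphism of a map $\pi=(y_1,\dots,y_n):X\to\mathbb{C}^n$, which, because $A$ is a finite $B$-module, is a finite (hence proper) morphism; it may equally be described as a generic linear projection $\mathbb{C}^r\to\mathbb{C}^n$ whose center at infinity is chosen disjoint from $M$. This $\pi$ is the candidate for the desired map.

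Finally I would verify the two required properties. For surjectivity: $\pi$ is finite, hence closed, so $\pi(X)$ is closed in $\mathbb{C}^n$; and $\pi$ is dominant since $B\hookrightarrow A$ is injective, so $\pi(X)$ is dense; a closed dense subset of the irreducible space $\mathbb{C}^n$ is all of it, giving $\pi(X)=\mathbb{C}^n$. For nondegeneracy: working in characteristic zero, the dominant generically finite morphism $\pi$ between smooth irreducible varieties of equal dimension is generically \'etale (the induced extension of function fields is separable), so the Jacobian determinant of $\pi$ is not identically zero and $\pi$ is a local biholomorphism on a dense Zariski-open subset of $X$. Thus $\pi:M\setminus S\to\mathbb{C}^n$ is the sought nondegenerate holomorphic surjection. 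I expect the main obstacle to be the surjectivity, i.e. the properness of $\pi$ near $S$: one must ensure that the normalizing projection does not escape to infinity over points of $\mathbb{C}^n$, which is precisely what the finiteness from Noether normalization (equivalently, the disjointness of the projection center from $M$ at infinity) guarantees. Nondegeneracy, by contrast, comes for free from generic smoothness in characteristic zero, which also explains why only a \emph{nondegenerate} map, rather than the everywhere locally biholomorphic maps of Proposition 6.6, should be expected here.
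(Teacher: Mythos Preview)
Your argument is correct. The complement $M\setminus S$ is indeed affine, Noether normalization produces a finite dominant morphism $\pi:M\setminus S\to\mathbb{C}^n$, finiteness (via lying-over or the closed-map argument you give) yields surjectivity, and generic smoothness in characteristic zero gives nondegeneracy. In fact your map is \emph{finite}, which is strictly more than what is asked.

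The paper, however, proceeds quite differently and in keeping with its overall theme of essential singularities and Picard-type behavior. It chooses, via Bertini, global sections $t_1,\dots,t_{n-1}\in H^0(M,L)$ whose zero sets meet $S$ normal-crossingly at a smooth point $p$, so that their common zero locus is a compact Riemann surface $R$ through $p$. On the open curve $R^\ast=R\setminus\{p\}$ (Stein) it prescribes a holomorphic function $f$ taking the values $0$ and $1$ along two sequences converging to $p$, making $p$ an essential singularity; since $M\setminus S$ is Stein, $f$ extends to $\tilde f$ on $M\setminus S$. The map $F=(t_1/s,\dots,t_{n-1}/s,\tilde f)$ then hits all of $\mathbb{C}^{n-1}\times\mathbb{C}^\ast$ by Picard's theorem applied to $f$ near $p$, and one composes with the locally biholomorphic surjection $\mathbb{C}^\ast\to\mathbb{C}$ of Lemma~6.2 to finish. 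Your algebraic route is shorter and yields a proper map; the paper's transcendental construction, by contrast, avoids Noether normalization entirely, ties directly into the Lindel\"of/Picard machinery developed earlier, and---crucially---transfers verbatim to the non-algebraic Stein setting of Proposition~6.9, where your affine-variety argument is not available.
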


\begin{proof} Let $L=[S_b]$ be the ample line bundle over $M$ associated to $S_b$ (multiplied by a sufficient large positive integer, we may assume that $L$ is very ample). Let $s$ be a global holomorphic section of $L$ whose associated divisor is precisely $S_b$. By Bertini's theorem, there are holomorphic sections $t_j\in H^0(M,L)\backslash \{0\}$, $1\le j\le n-1$, such that the associated hypersurfaces $T_j=\{t_j=0\}$ are smooth and $T_1,T_2,\cdots,T_{n-1},S$ are normal crossing at some smooth point $p$ of $S$. Let $R$ be the compact Riemann surface obtained by intersection of $T_j,\,1\le j\le n-1$ and put $R^\ast=R\backslash \{p\}$. Since $R^\ast$ an open Riemann surface (hence is Stein), we may choose a holomorphic function $f$ on $R^\ast$ such that it equals $0$ on a discrete sequence of points converging to $p$ and equals $1$ on another discrete sequence of points converging to $p$ by virtue of Cartan's Theorem $A$. Furthermore, we may choose $f$ so that $f'\neq 0$ at some point which is sufficiently close to $p$.  Thus $p$ is an essential singularity of $f$. Since $S_b$ is ample, $-\log|s|$ is a strictly plurisubharmonic exhaustion function of $M\backslash S$, so we see that $M\backslash S$ is a Stein manifold. Thus $f$ may be extended to a holomorphic function $\tilde{f}$ on $M\backslash S$. We define a holomorphic map $F=(f_1,f_2,\cdots,f_n):M\backslash S \rightarrow {\mathbb C}^n$ by
$$
f_n=\tilde{f},\ f_j=t_j/s,\ 1\le j\le n-1.
$$
Since $f$ omits at most one value, say $0$, in each deleted neighborhood of $p$ in $R$, we conclude that $F$ maps $M\backslash S$ onto ${\mathbb C}^\ast\times {\mathbb C}^{n-1}$. Furthermore, the complex Jacobian of $F$ is not identically zero. Combining with Lemma 6.2, we conclude the proof.
\end{proof}

\begin{corollary} Let $M$ be an Abelian variety and $S$ an ample divisor of $M$. Then there is a surjective nondegenerate holomorphic map from $M\backslash S$ to $M$.
\end{corollary}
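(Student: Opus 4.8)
The plan is to deduce this immediately from Proposition 6.7 together with the standard uniformization of an abelian variety. First I would note that an abelian variety $M$ of dimension $n$ is a projective algebraic manifold, and that an ample divisor $S$ is automatically quasi-ample: in the notation of Proposition 6.7 one may simply take all multiplicities $b_j=1$, so that $S_b=S$ is already ample. Hence Proposition 6.7 applies verbatim and produces a nondegenerate holomorphic map $F:M\backslash S\rightarrow {\mathbb C}^n$ with $F(M\backslash S)={\mathbb C}^n$.

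Next I would invoke the classical fact that an abelian variety admits a presentation $M={\mathbb C}^n/\Lambda$, where $\Lambda$ is a lattice of rank $2n$, so that the quotient map $\pi:{\mathbb C}^n\rightarrow M$ is a holomorphic covering map. In particular $\pi$ is surjective and locally biholomorphic, so its complex Jacobian vanishes nowhere. The desired map is then the composition $\Phi:=\pi\circ F:M\backslash S\rightarrow M$. Surjectivity is immediate, since $F$ maps onto ${\mathbb C}^n$ and $\pi$ maps ${\mathbb C}^n$ onto $M$, whence $\Phi(M\backslash S)=\pi({\mathbb C}^n)=M$.

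It remains only to check nondegeneracy of $\Phi$, and this is where (such as it is) the single point of care lies. By the chain rule the complex Jacobian factors as $\det D\Phi=\left((\det D\pi)\circ F\right)\cdot \det DF$. Because $\pi$ is a local biholomorphism, $(\det D\pi)\circ F$ is nowhere zero, while $\det DF\not\equiv 0$ since $F$ is nondegenerate by Proposition 6.7; therefore $\det D\Phi\not\equiv 0$, i.e.\ $\Phi$ is nondegenerate. Thus $\Phi$ is a surjective nondegenerate holomorphic map from $M\backslash S$ onto $M$, as required. The argument is essentially routine: the only substantive ingredients are the reduction ``ample $\Rightarrow$ quasi-ample'' feeding Proposition 6.7, and the observation that nondegeneracy is preserved upon post-composing with the covering map $\pi$, the latter being a local biholomorphism.
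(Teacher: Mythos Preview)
Your proof is correct and follows essentially the same approach as the paper: apply Proposition 6.7 to obtain a surjective nondegenerate map $F:M\backslash S\to{\mathbb C}^n$, then compose with the universal covering $\pi:{\mathbb C}^n\to M$. You have simply spelled out in more detail why ample implies quasi-ample and why nondegeneracy survives post-composition with the local biholomorphism $\pi$.
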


\begin{proof} Let $\pi:{\mathbb C}^n\rightarrow M$ be the universal covering map. By the previous proposition, there is a surjective nondegenerate holomorphic map $F:M\backslash S \rightarrow {\mathbb C}^n$. It suffices to take $\pi\circ F$.
\end{proof}

Actually, every holomorphic map of an $n-$dimensional complex manifold ONTO ${\mathbb C}^n$ is nondegenerate, for the image of an everywhere degenerate holomorphic map is contained in a countable union of local complex varities in ${\mathbb C}^n$ of dimension $<n$.

\begin{proposition}
If $M$ is a Stein manifold of dimension $n$ and $S$ is an analytic hypersurface, then there exists a nondegenerate holomorphic map from $M\backslash S$ onto ${\mathbb C}^n$.
\end{proposition}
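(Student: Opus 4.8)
The plan is to follow the proof of Proposition 6.7, replacing the projective/ample input by the intrinsic Stein structure together with the line bundle $[S]$ associated to $S$. First I would record the two structural facts that make the scheme run. Since $M$ is Stein and $S$ is an analytic hypersurface, the complement $X:=M\setminus S$ is again Stein; this is the analogue of the step in Proposition 6.7 where ampleness of $S_b$ forced $-\log|s|$ to be a strictly plurisubharmonic exhaustion. Next, taking $S$ with its reduced structure, let $L:=[S]$ be the associated holomorphic line bundle and let $s\in H^0(M,L)$ be the canonical section whose divisor is exactly $S$, so that $\{s=0\}=S$ as sets. The point of working with $L$ rather than with a global defining function (which need not exist, since $L$ may be holomorphically nontrivial) is that a quotient $g/s$ of two holomorphic sections of the \emph{same} bundle is an honest meromorphic function on $M$ whose poles lie only on $S$, hence is holomorphic on all of $X$.

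Second, I would localize at a smooth point. Choose $p$ in the regular locus of $S$ and local coordinates $(z_1,\dots,z_n)$ centered at $p$ with $S=\{z_n=0\}$ near $p$. Because $M$ is Stein, the coherent sheaf ${\mathcal O}(L)$ is globally generated (Cartan's Theorem A) and its first cohomology vanishes (Cartan's Theorem B), so I can prescribe the $1$-jets at $p$ of finitely many global sections. Using this I would pick $g_1,\dots,g_{n-1}\in H^0(M,L)$ whose $1$-jets at $p$ are arranged so that, in a local trivialization of $L$, one has $g_j=z_j+O(|z|^2)$ while $s=z_n+O(|z|^2)$. Then $f_j:=g_j/s\in{\mathcal O}(X)$ and, near $p$, $f_j\approx z_j/z_n$; exactly as in Proposition 6.7 the map $(f_1,\dots,f_{n-1})$ blows up at $p$ and covers ${\mathbb C}^{n-1}$. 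I would also arrange the jets so that $R:=\{g_1=\cdots=g_{n-1}=0\}$ is, near $p$, a smooth curve transverse to $S$ and meeting it only at $p$, so that $R^*:=R\cap X$ is a closed one-dimensional submanifold of the Stein manifold $X$.

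Third, the Picard coordinate is produced exactly as in Proposition 6.7. On the open Riemann surface $R^*$, which is Stein, Cartan's Theorem A furnishes a holomorphic function $f$ equal to $0$ on one discrete sequence tending to $p$ and to $1$ on another, so that $p$ is an essential singularity of $f$ and $f$ omits at most one value, say $0$, in each deleted neighborhood of $p$ in $R$. By Cartan's extension theorem (Theorem B applied to the ideal sheaf of the closed submanifold $R^*\subset X$) I extend $f$ to $\tilde f\in{\mathcal O}(X)$ and set $f_n:=\tilde f$. Defining $F=(f_1,\dots,f_n):X\to{\mathbb C}^n$, the surjectivity of $F$ onto ${\mathbb C}^{n-1}\times{\mathbb C}^\ast$ follows word for word from the argument in Proposition 6.7: the transverse part $(f_1,\dots,f_{n-1})$ already covers ${\mathbb C}^{n-1}$ near $p$, while the Casorati--Weierstrass/Picard behaviour of $f_n$ on the curves through $p$ supplies the ${\mathbb C}^\ast$ factor. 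Composing the last coordinate with the surjective locally biholomorphic map ${\mathbb C}^\ast\to{\mathbb C}$ of Lemma 6.2 turns the image into all of ${\mathbb C}^n$, and a map onto ${\mathbb C}^n$ from an $n$-dimensional manifold is automatically nondegenerate.

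The main obstacle is precisely the second step. In Proposition 6.7 ampleness of $S_b$ supplied, for free, both the Steinness of the complement and a whole linear system of sections $t_j$ with $t_j/s$ holomorphic off $S$ and realizing the transverse blow-up. In the merely Stein setting there is no ample system, and, worse, $S$ need not be the zero set of a single global holomorphic function. The heart of the adaptation is therefore the recognition that $[S]$ always carries a section $s$ cutting out exactly $S$, so that ratios of sections of $[S]$ are holomorphic precisely on $M\setminus S$, together with the use of Cartan's Theorems A and B on the Stein manifold $M$ to manufacture sections $g_1,\dots,g_{n-1}$ with the prescribed $1$-jets at $p$. Once these sections are in hand, the remainder of the proof is identical to that of Proposition 6.7.
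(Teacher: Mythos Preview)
Your argument follows the paper's proof of this proposition essentially step for step: fix a smooth point $p\in S$, produce $n-1$ ``transverse'' quotients that blow up like $z_j/z_n$ at $p$, cut out a curve $R$ through $p$, interpolate on $R\setminus\{p\}$ a function with essential singularity at $p$, extend it to $M\setminus S$ by Cartan~B, and combine to get a map onto $\mathbb C^{n-1}\times\mathbb C^\ast$, finishing with Lemma~6.2. The paper's write-up is in fact shorter: it simply says ``take a holomorphic function $h$ on $M$ with $S=h^{-1}(0)$'' and uses ordinary functions $f_j/h$ in place of your $g_j/s$.

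The one substantive difference is exactly the point you flag. The paper assumes a global defining function $h$ for $S$, while you work with sections of $L=[S]$ and form ratios $g_j/s$. On a Stein manifold one has $\mathrm{Pic}(M)\cong H^2(M,\mathbb Z)$, so a global $h$ exists iff $[S]$ is topologically trivial; in general it need not be, and your formulation covers that case without extra cost. Two small remarks: your claim that $R^\ast$ is a ``closed one-dimensional submanifold'' of $X$ is stronger than needed and not literally guaranteed globally (away from $p$ the common zero set of the $g_j$ may be singular or have higher-dimensional components); what you actually use, and what the paper uses, is only that $R^\ast$ is a closed analytic subset of the Stein manifold $X$ which is a smooth curve near $p$, so that interpolation and extension go through. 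Also, the surjectivity onto $\mathbb C^{n-1}\times\mathbb C^\ast$ is asserted rather than argued in both your write-up and the paper's (``it is easy to see''); you are not missing anything relative to the paper here, but a line explaining why the essential singularity on $R$ forces the last coordinate to hit every value of $\mathbb C^\ast$ over each fibre of $(f_1,\dots,f_{n-1})$ would strengthen the exposition.
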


\begin{proof}
The argument is similar as Proposition 6.7. Take a holomorphic function $h$ on $M$ such that $S=h^{-1}(0)$. Fix a regular point $p$ of $S$. We may take holomorphic functions $f_1,f_2,\cdots,f_{n-1}$ on $M$ such that $f_j(p)=0$ and the holomorphic map $(f_1,\cdots,f_{n-1},h)$ is nondegenerate at $p$. Put $R=\{z\in M\backslash S:f_1(z)=\cdots=f_{n-1}(z)=0\}$. Since $R$ is a Stein space, we may choose a holomorphic function $f$ on $R$ such that it equals $0$ on a discrete sequence of points on $R$ converging to $p$ and equals $1$ on another discrete sequence of points on $R$ converging to $p$. Since $M\backslash S$ is Stein, $f$ can be extended to a holomorphic function $\tilde{f}$ on $M\backslash S$. It is easy to see that $F:=(f_1/h,\cdots,f_{n-1}/h,\tilde{f})$ maps $M\backslash S$ holomorphically onto ${\mathbb C}^{n-1}\times {\mathbb C}^\ast$.
\end{proof}

It seems quite possible that every $n-$dimensional Stein manifold admits a holomorphic map onto ${\mathbb C}^n$.

\section{A remark on manifolds with nonconstant bounded holomorphic functions}

Usually it is very difficult to know whether there exist nonconstant bounded holomorphic functions on a given complex manifold. Nevertheless, we have the following

\begin{proposition} {\it Let $M$ be a complex manifold which admits a nonconstant meromorphic function. Then there is an analytic hypersurface $S$ in $M$ such that the universal covering of $M\backslash S$ admits nonconstant bounded holomorphic functions. In particular, $M\backslash S$ is not universally dominated.}

\end{proposition}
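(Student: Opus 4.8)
The plan is to turn the meromorphic function into a genuine holomorphic map of a suitable complement into a thrice-punctured ${\mathbb P}^1$, and then to exploit the hyperbolicity of the target. Write $h$ for the given nonconstant meromorphic function; passing to the component on which it is nonconstant we may assume $M$ is connected. Let $I$ be the indeterminacy locus of $h$, an analytic set of codimension $\ge 2$; on $M\setminus I$ the function $h$ is an honest nonconstant holomorphic map to ${\mathbb P}^1$, hence an open map, so $h(M\setminus I)$ is a nonempty open subset of ${\mathbb P}^1$. Such a set contains infinitely many finite points, and I choose three distinct values $a,b,c\in{\mathbb C}$ in it. Set
$$
S:=\overline{h^{-1}(a)}\cup\overline{h^{-1}(b)}\cup\overline{h^{-1}(c)},
$$
the closures being taken in $M$. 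Locally writing $h=p/q$ with $p,q$ coprime holomorphic, one has $\overline{h^{-1}(a)}=\{p-aq=0\}$ (note $p-aq\not\equiv 0$ since $h\not\equiv a$), so each piece is an analytic hypersurface; moreover every point of $I$ satisfies $p=q=0$, hence lies in each $\{p-aq=0\}$, so $I\subset S$.

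Consequently $M\setminus S\subset M\setminus I$, and the restriction
$$
h:M\setminus S\longrightarrow N:={\mathbb P}^1\setminus\{a,b,c\}
$$
is a well-defined nonconstant holomorphic map. Since $M$ is connected and $S$ is analytic of codimension $1$, the complement $M\setminus S$ is connected, hence irreducible. Now recall that $N$ is Kobayashi hyperbolic with universal covering $\pi_N:{\mathbb D}\to N$. Let $\pi:\widetilde{M\setminus S}\to M\setminus S$ denote the universal covering. The composite $h\circ\pi:\widetilde{M\setminus S}\to N$ is defined on the simply connected manifold $\widetilde{M\setminus S}$, so by the monodromy theorem it lifts through $\pi_N$: there is a holomorphic $g:\widetilde{M\setminus S}\to{\mathbb D}$ with $\pi_N\circ g=h\circ\pi$. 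Regarded as a ${\mathbb C}$-valued function, $g$ is bounded by $1$, and it is nonconstant --- otherwise $h\circ\pi=\pi_N\circ g$ would be constant, contradicting the surjectivity of $\pi$ and the nonconstancy of $h$ on $M\setminus S$. This establishes the first assertion.

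For the final claim, the distance-decreasing property of $h:M\setminus S\to N$ gives $k_N(h(x),h(y))\le k_{M\setminus S}(x,y)$; since $h$ is nonconstant and $N$ is hyperbolic, we may pick $x,y$ with $k_N(h(x),h(y))>0$, so $k_{M\setminus S}\not\equiv 0$. By Proposition 4.2/(1) a universally dominated space has identically vanishing Kobayashi pseudodistance, so $M\setminus S$ cannot be universally dominated. (Equivalently, by Proposition 4.1 a universally dominated $M\setminus S$ would be dominated by ${\mathbb C}$ via some $F:{\mathbb C}\to M\setminus S$ with dense image; then $h\circ F:{\mathbb C}\to{\mathbb P}^1$ omits $a,b,c$ and is therefore constant by Picard's theorem, contradicting that its image is dense in the open set $h(M\setminus S)$.)

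The only delicate point is verifying that $S$ is a genuine analytic hypersurface containing $I$: this is where the meromorphic, rather than holomorphic, nature of $h$ must be handled, via the local representation $h=p/q$ and the codimension bound on the indeterminacy set. The remaining ingredients --- openness of a nonconstant map to ${\mathbb P}^1$, the lifting of $h\circ\pi$ to the disk, and the hyperbolicity of $N$ --- are standard, so I expect this bookkeeping around $I$ to be the main (and only mild) obstacle.
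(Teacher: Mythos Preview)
Your argument is correct and follows essentially the same route as the paper: remove a hypersurface so that the meromorphic function becomes a holomorphic map into a thrice-punctured sphere, then lift to the disk via the universal covering. The only cosmetic difference is that the paper removes the pole divisor together with the fibres over two finite values (landing in $\mathbb{C}\setminus\{a,b\}$), whereas you keep $\infty$ in the target and remove the fibres over three finite values; the ``not universally dominated'' clause is likewise handled by the same Picard/Liouville mechanism, though you phrase it once via the Kobayashi pseudodistance.
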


We need the following

\begin{theorem}[Picard's Little Theorem] {\it Let $X$ be a complex manifold whose universal covering $\tilde{X}$ admits no nonconstant bounded holomorphic functions. Then every nonconstant holomorphic function on $X$ omits at most one value.}
\end{theorem}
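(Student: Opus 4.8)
The plan is to argue by contradiction, using the classical uniformization fact that the twice-punctured plane is covered by the unit disc in order to manufacture a bounded holomorphic function on $\tilde{X}$. So suppose, contrary to the assertion, that $f\in{\mathcal O}(X)$ is nonconstant and omits two distinct values $a,b\in{\mathbb C}$. Then $f$ may be regarded as a holomorphic map $f:X\rightarrow {\mathbb C}\backslash\{a,b\}$. The first step is to invoke uniformization: the twice-punctured plane ${\mathbb C}\backslash\{a,b\}$ is Kobayashi hyperbolic, and its universal covering is the unit disc $\D$; let $p:\D\rightarrow {\mathbb C}\backslash\{a,b\}$ denote the holomorphic covering map. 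Writing $\pi:\tilde{X}\rightarrow X$ for the universal covering of $X$, composition yields a holomorphic map $f\circ\pi:\tilde{X}\rightarrow {\mathbb C}\backslash\{a,b\}$.

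The key step is to lift $f\circ\pi$ through the covering $p$. Since $\tilde{X}$ is simply connected, the standard lifting criterion for covering spaces supplies a continuous lift $\tilde{f}:\tilde{X}\rightarrow\D$ with $p\circ\tilde{f}=f\circ\pi$; the hypothesis $(f\circ\pi)_\ast\pi_1(\tilde{X})\subset p_\ast\pi_1(\D)$ of that criterion holds trivially because $\pi_1(\tilde{X})=0$. As $p$ is a local biholomorphism and $f\circ\pi$ is holomorphic, the continuous lift $\tilde{f}$ is automatically holomorphic, and since it takes values in $\D$ it is in particular a \emph{bounded} holomorphic function on $\tilde{X}$. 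It remains to check that $\tilde{f}$ is nonconstant: were $\tilde{f}$ constant, then $f\circ\pi=p\circ\tilde{f}$ would be constant on $\tilde{X}$, and since $\pi$ is surjective $f$ would be constant on $X$, contrary to assumption.

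Thus $\tilde{f}$ is a nonconstant bounded holomorphic function on $\tilde{X}$, contradicting the hypothesis that $\tilde{X}$ admits no such function. Therefore no nonconstant $f\in{\mathcal O}(X)$ can omit two values, which is the assertion. I do not expect any serious obstacle here: the only point requiring care is the promotion of the topological lift to a holomorphic one, which is immediate once one records that $p$ is a local biholomorphism. The genuine content of the argument is entirely the geometric input that ${\mathbb C}\backslash\{a,b\}$ is covered by the disc, so that omission of two values forces the composed map to factor through a bounded target.
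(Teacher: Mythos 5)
Your proof is correct and is essentially the paper's own argument: the paper likewise lifts $f$ (viewed as a map into ${\mathbb C}\backslash\{a,b\}$) to a map $\tilde{f}:\tilde{X}\rightarrow {\mathbb D}$ between the universal coverings and derives the same contradiction. You have merely spelled out the details (the lifting criterion, holomorphicity of the lift, and nonconstancy) that the paper leaves implicit.
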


\begin{proof} Suppose there is a nonconstant holomorphic function $f:X\rightarrow {\mathbb C}\backslash \{a,b\}$. Let
$\tilde{f}$ be a lift of $f$ to the universal coverings of $X$ and ${\mathbb C}\backslash\{a,b\}$, i.e., $\tilde{X}$ and
${\mathbb D}$. Then it is a nonconstant bounded holomorphic function on $\tilde{X}$. Contradiction.
\end{proof}

Although the proof is trivial, this result still has several amusing consequences. For instance, it follows that\/ {\it every}\/ nonconstant holomorphic function on a (Stein) quotient of ${\mathbb C}^n$ by a free, properly discontinuous group of automorphisms omits at most one value.

\begin{proof}[Proof of Proposition 7.1] We fix a nonconstant meromorphic function $f$ on $M$ and choose a cover $\{U_j\}$ of $M$ such that
 $$
 f=g_j/h_j
 $$
 on $U_j$, where $g_j, h_j$ are two relatively prime holomorphic functions. Thus
 $$
 f_{jk}=g_j/g_k=h_j/h_k
 $$
 is a non-vanishing holomorphic function on $U_j \cap U_k$. Thus the set
 $$
 S_0:=\bigcup\left\{z\in U_j:h_j(z)=0\right\}
 $$
 is an analytic hypersurface of $M$ and $f$ is a nonconstant holomorphic function on $M\backslash S_0$. Fix any
 two distinct complex numbers $a,b$ and put $S:=S_0 \cup f^{-1}(a)\cup f^{-1}(b)$. Thanks to Picard's Little Theorem,
 we see that the universal covering of $M\backslash S$ admits nonconstant bounded holomorphic functions.
 We claim that $M\backslash S$ is not universally dominated. Suppose on the contrary there is a dominant morphism
 $F:{\mathbb C}\rightarrow M\backslash S$. Let $\tilde{F}$ be a lift of $F$ to the universal covering $X$ of
 $M\backslash S$ and $h$ a nonconstant bounded holomorphic function on $X$. Then we get a nonconstant bounded
 holomorphic function $h\circ {\tilde{F}}$ on ${\mathbb C}$, which contradicts with Liouville's theorem.
 \end{proof}

\textbf{Acknowledgements.} We would like to thank Professor Franc Forstneri$\check{\rm c}$ for numerous comments on this paper. We also thank Dr. Qi'an Guan for catching an inaccuracy in the proof of Proposition 1.3. Finally, we wish to thank the referee for a very careful reading and many valuable comments.

\section{A supplement}
A trivial but interesting generalization of the classical Picard's little theorem to several complex variables is the following

     \begin{theorem}[Picard's little theorem]
     A holomorphic map $F:{\mathbb C}^n\rightarrow {\mathbb C}^m$ is non-degenerate in the sense that the image of $F$ is not contained in any analytic hypersurface of ${\mathbb C}^m$ if and only if for any nonconstant entire function $g$ on ${\mathbb C}^m$ the function $g\circ F$ assumes all complex numbers with at most one exception.
     \end{theorem}

     \begin{proof}
     Only if part: suppose there is a  nonconstant entire function $g$ of ${\mathbb C}^m$ so that the image of $g\circ F$ omits at least two points, say $0,1$ for the sake of simplicity. Let $\pi:{\mathbb D}\rightarrow {\mathbb C}\backslash \{0,1\}$ be the natural covering projection where ${\mathbb D}$ is the unit disc. Then there is a holomorphic map $G:{\mathbb C}^n\rightarrow {\mathbb D}$ satisfying $g\circ F=\pi\circ G$. By Liouville's theorem, $G$ has to be a constant $c$, so $g\circ F=\pi(c)$. Thus the image of $F$ has to lie in the analytic hypersurface $g^{-1}(\pi(c))$.

     If part: suppose the image of $F$ is contained in an analytic hypersurface $S$ of ${\mathbb C}^m$. It is well known that there is a nonconstant function $g$ on ${\mathbb C}^m$ such that $S=g^{-1}(0)$. Thus $g\circ F=0$.
     \end{proof}

      Unfortunately, it is not easy to formulate an\/ {\it analogous}\/ Picard's great theorem. Thus it is worthwhile to introduce a definition of Picard points for holomorphic maps. Let us first recall the following

      \begin{definition}
      Let $\Omega$ be a domain in ${\mathbb C}^n$. A point $p\in \partial \Omega$ is said to be a Picard point for some holomorphic function $f$ on $\Omega$ if $f$ assumes infinitely often in any neighborhood of $p$ all complex numbers with at most one exception.
      \end{definition}

       Forstneri$\check{\rm c}$ suggested a reasonable generalization of the concept for holomorphic maps as follows: $p$ is a Picard point for some holomorphic map $F:\Omega\rightarrow {\mathbb C}^m$ if it is a Picard point for any function of the form $P\circ F$ where $P$ is a nonconstant polynomial on ${\mathbb C}^m$. In view of Picard's little theorem mentioned above, it seems more natural to make the following

       \begin{definition}
      A point $p\in \partial \Omega$ is said to be a Picard point for some holomorphic map $F:\Omega\rightarrow {\mathbb C}^m$ if it is a Picard point for any function of form $g\circ F$ where $g$ is a nonconstant entire function on ${\mathbb C}^m$.
      \end{definition}

               \begin{lemma}
       Two definitions coincide when $m=1$.
        \end{lemma}

       \begin{proof}
        It suffices to show that the first definition implies the second. Let $U$ be any neighborhood of $p$. Then $f(\Omega\cap U)$ omits at most one complex number $a$. Let $g$ be a nonconstant entire function on ${\mathbb C}$. If $g$ is a polynomial, then $g\circ f(\Omega\cap U)$ omits at most one complex number $g(a)$. If $g$ is not a polynomial, then $g({\mathbb C})$ omits at most one complex number $b$. Thus for any $c\neq b$, $g^{-1}(c)$ contains at least two points $a_1,a_2$. We may assume $a_1\neq a$. Take $z^0\in \Omega\cap U$ such that $f(z^0)=a_1$, so $g\circ f(z^0)=c$.
        \end{proof}

      We may define ${\rm P}_{\alpha}(\Omega,{\mathbb C}^m)$, $\alpha>0$,  to be the set of holomorphic maps $F:\Omega\rightarrow {\mathbb C}^m$ whose set of Picard points coincides with $\partial \Omega$, and there are constants $C_1,C_2>0$ such that
$$
 |F(z)|\le C_1 e^{C_2 \hat{\delta}(z)^{-\alpha}}, \ \ \ z\in \Omega.
$$
The purpose of this supplement is to generalize Theorem 1.2 as follows

\begin{theorem} If\/ $\Omega\subset {\mathbb C}^n$ is a pseudoconvex domain, then\/ ${\rm P}_\alpha(\Omega,{\mathbb C}^m)$ is nonempty for any $\alpha>6$. Furthermore, if\/ $\Omega$ is bounded, then\/ ${\rm P}_\alpha(\Omega)$ is nonempty for any $\alpha>2$.
\end{theorem}

\begin{proof}
We follow closely the proof of Theorem 1.2. Choose first a dense sequence $\{p_j\}_{j=1}^\infty$ of points in $\partial \Omega$ and a mutually disjoint sequence of balls $\{B_j\}_{j=1}^\infty$ in $\Omega$ such that $\partial B_j $ touches $\partial \Omega$ only at $p_j$ for each $j$. By virtue of Lemma 5.5, we may choose in each $B_j$ a sequence $\{z^{j\mu}\}_{\mu=1}^\infty$ of points on the radius terminating at $p_j$
such that $\lim_{\mu\rightarrow \infty} z^{j\mu}=p_j$,
$\lim_{\mu\rightarrow \infty} k_{B_j}\left(z^{j\mu},z^{j(\mu+1)}\right)=0,
$
where $k_{B_j}$ denotes the Kobayashi distance of $B_j$,
and
$$
B\left(z^{j\mu},\frac12\hat{\delta}(z^{j\mu})|\log \hat{\delta}(z^{j\mu})|^{-1}\right)\subset B_j
$$
 are mutually disjoint for all $j,\mu$.
 Let $\{\zeta^k\}_{k=1}^\infty$ be a sequence of complex $m-$vectors which are dense in ${\mathbb C}^m$. Note that $\sum_{j,\mu=1}^\infty e^{-\hat{\delta}(z^{j\mu})^{-4}}<\infty$, in view of Lemma 2.4. We renumerate the sequence of positive even numbers by $\{\nu^1_{\mu}\}_{\mu=1}^\infty$ and take $\mu_{j1}>0$ such that
 $$
|\zeta^1|^2 \sum_{\mu=\mu_{j1}}^\infty  e^{-\hat{\delta}(z^{j\nu^1_\mu})^{-4}} <2^{-j}.
$$
For general $k>1$, we may choose $\mu_{jk}$ by induction such that
$$
|\zeta^k|^2 \sum_{\mu=\mu_{jk}}^\infty  e^{-\hat{\delta}(z^{j\nu^k_\mu})^{-4}}<2^{-jk}
$$
where $\{\nu_{\mu}^{k}\}_{\mu= 1}^\infty$ is a renumeration of the sequence $\{\nu_{2\mu}^{k-1}\}_{\mu= 1}^\infty$.

  Now define a sequence $\{c^{j\mu}\}_{j,\mu=1}^\infty$ of complex $m-$vectors by $c^{j\nu^k_\mu}=\zeta^k$ for all $j\ge 1$ and $\mu\ge \mu_{jk}$, and $c^{j\mu}=0$ otherwise. Clearly, we have
  $
  \sum_{j,\mu=1}^\infty |c^{j\mu}|^2 e^{-\hat{\delta}(z^{j\mu})^{-4}}<\infty,
  $
  so that
  $$
 \sum_{j,\mu=1}^\infty |c^{j\mu}|^2 e^{-a\,\hat{\delta}(z^{j\mu})^{-\alpha}}<\infty
 $$
 for any $a>0$ and $\alpha>4$. By virtue of Proposition 2.2/(2), there exists a holomorphic map $F:\Omega\rightarrow {\mathbb C}^m$ such that $F(z^{j\mu})=c^{j\mu}$ for all $j,\mu$, and
   $$
 |F(z)|\le C_1 e^{C_2 \hat{\delta}(z)^{-\alpha-2}}, \ \ \ z\in \Omega.
$$
Let $g$ be a nonconstant entire function on ${\mathbb C}^m$.  We claim that each $p_j$ is a Picard point of $g\circ F$.  Suppose on the contrary that $p_j$ is not a Picard point, then there would be a neighborhood $U_j$ of $p_j$ such that $g\circ F|_{\Omega\cap U_j}$ omits at least two values, in particular, it is a normal function on $B_j\cap U_j$. Applying Proposition 5.4 to the sequence $\{z^{j(2\mu-1)}\}_{\mu=1}^\infty$, we conclude that $g\circ F|_{B_j\cap U_j}$ has non-tangential limit $g(0)$ at $p_j$. But for a fixed complex $m-$vector $\zeta_0\notin g^{-1}(0)$, we may choose by diagonal process a subsequence $\{z^{j\mu_l}\}_{l=1}^\infty$ of $\{z^{j\mu}\}_{\mu=1}^\infty$ which tends to $p_j$, such that $F(z^{j\mu_l})\rightarrow \zeta_0$ as $l\rightarrow \infty$. It follows that $g\circ F(z^{j\mu_l})\rightarrow g(\zeta_0)\neq g(0)$, which is absurd.  The second conclusion may be proved similarly by using Proposition 2.3 in place of Proposition 2.2.
\end{proof}

\end{document}